\documentclass[letterpaper,11pt,reqno,twoside]{article}
\usepackage[
  backend=biber,
  style=alphabetic,
  natbib=true,
]{biblatex}

\usepackage[english]{babel}
\usepackage{amsmath,amsfonts,amssymb,amsthm,mathabx}
\usepackage[utf8]{inputenc}
\usepackage{newunicodechar}
\newunicodechar{́}{\'}
\usepackage[margin=2cm]{geometry}
\usepackage[scr=boondoxo]{mathalpha}
\usepackage{sidenotes}
\usepackage{multirow}
\usepackage{graphicx}
\graphicspath{{images/}}
\usepackage{caption}
\usepackage{xcolor}

\usepackage{orcidlink}
\usepackage{titlecaps}
\Addlcwords{a is with of for on in are and its to the by under around}

\newcommand{\mytitle}{\titlecap{%
Numerical computation of quasiperiodic reducible saddle-node bifurcations: 
a parameterization method approach
}}
\newcommand{\myshorttitle}{\titlecap{%
Quasiperiodic reducible  saddle-node bifurcations
}}

\usepackage{hyperref}
\hypersetup{
hidelinks,
bookmarksnumbered=true,
bookmarksopen=true,
pdftitle={Quasiperiodic reducible saddle-node bifurcations},
pdfauthor={J.-Ll. Figueras, J. Gimeno, and J. Parker},
}
\usepackage{cleveref}

\newcommand{\Z}{\mathbb{Z}}
\newcommand{\R}{\mathbb{R}}
\newcommand{\C}{\mathbb{C}}
\newcommand{\T}{\mathbb{T}}
\newcommand{\I}{\boldsymbol{i}}

\DeclareMathOperator{\Span}{Span}
\newcommand{\dop}{\mathrm{D}} % derivative operator
\newcommand{\Id}{Id}
\DeclareMathOperator{\diag}{diag}

\newcommand{\bydef}{\,\stackrel{\mbox{\tiny\textnormal{\raisebox{0ex}[0ex][0ex]{def}}}}{=}\,}

\newcommand{\x}{z}
\newcommand{\vf}{F}
\newcommand{\li}{\mathscr{L}}
\let\th\relax \newcommand{\th}{\theta}
\newcommand{\omg}{\omega}
\newcommand{\kk}{K}
\newcommand{\prm}{\mu}
\newcommand{\ee}{E}
\newcommand{\f}{P}
\newcommand{\tf}{L}
\newcommand{\nf}{N}
\newcommand{\nfbif}{W}
\newcommand{\lamb}{\Lambda}

\newcommand{\seed}[1]{#1_0}
\newcommand{\kko}{\seed \kk}
\newcommand{\prmo}{\seed \prm}
\newcommand{\eeo}{\seed \ee}
\newcommand{\fo}{\seed \f}
\newcommand{\tfo}{\seed L}
\newcommand{\nfo}{\seed \nf}
\newcommand{\nfbifo}{\seed \nfbif}
\newcommand{\lambo}{\seed \lamb}

\newcommand{\tol}{\texttt{tol}}

\newcommand{\correction}[1]{\Delta{#1}}
\newcommand{\hlamb}{\correction \lamb}

\newcommand{\hkk}{\correction \kk}
\newcommand{\homg}{\correction \omg}
\newcommand{\hprm}{\correction \prm}

\newcommand{\hnf}{\correction{\nf}}
\newcommand{\hLambda}{\correction{\lamb}}
\newcommand{\hnfbif}{\correction{\nfbif}}

\newcommand{\eetor}{\ee _{\texttt{tor}}}
\newcommand{\eered}{\ee _{\texttt{red}}}

\renewcommand{\Re}{\mathrm{Re}\,}

\newcommand{\fou}[1]{\widehat{#1}}
\newcommand{\avg}[1]{\langle #1 \rangle}

\newcommand{\bifprm}{\vartheta}

\newcommand{\bifu}[1]{{#1}^{\texttt{bif}}}
\newcommand{\lambbif}{\bifu \lamb}
\newcommand{\fbif}{\bifu \f}
\newcommand{\kkbif}{\bifu \kk}
\newcommand{\prmbif}{\bifu \prm}

\newcommand{\bifprmbif}{\bifu \bifprm}
\newcommand{\bifprmo}{\seed \bifprm}
\newcommand{\hbifprm}{\correction \bifprm}

\newcommand{\nfvbif}{v ^c}
\newcommand{\nfvbifo}{\seed \nfvbif}
\newcommand{\hnfvbif}{\correction \nfvbif}
\newcommand{\nfvbifbif}{\bifu{(\nfvbif)}}

\newcommand{\lambdabif}{\lambda ^c}
\newcommand{\lambdabifo}{\seed \lambdabif}
\newcommand{\hlambdabif}{\correction \lambdabif}
\newcommand{\lambdabifbif}{\bifu{(\lambdabif)}}

\newcommand{\helpprm}{\varsigma}
\newcommand{\helpprmo}{\seed\varsigma}
\newcommand{\hhelpprm}{\correction\varsigma}

\newcommand{\tortuple}{ \mathscr{T} }
\newcommand{\tortupleo}{ \seed {\tortuple} }

\newcommand{\torbiftuple}{ \mathscr{B}  }
\newcommand{\torbiftupleo}{ \seed {\torbiftuple} }
\newcommand{\torbiftuplebif}{ \bifu {\torbiftuple} }
\newcommand{\htorbiftuple}{ \correction {\torbiftuple} }

\newtheorem{thm}{Theorem}

\newtheorem{lem}[thm]{Lemma}

\newtheorem{alg}{Algorithm}
\newcommand{\algoendsymbol}{\ensuremath{\Diamond}} % or \square

\theoremstyle{definition}
\newtheorem{dfn}{Definition}

\theoremstyle{remark}
\newtheorem{rmk}{Remark}

\title{\mytitle{}}
\author{%
Jordi-Llu\'is Figueras\,\orcidlink{0000-0002-0535-4137}\textsuperscript{(1)} \and 
Joan Gimeno\,\orcidlink{0000-0002-8707-6379}\textsuperscript{(2)} \and 
Jeremy Parker\,\orcidlink{0000-0003-2066-072X}\textsuperscript{(3,\textasteriskcentered)}}
\date{\today}
\begin{document}

\maketitle
{\small
\begin{itemize}
\renewcommand{\itemsep}{.5pt}
\item [(1)] Department of Mathematics, %
Uppsala University, Box 480, 751 06 Uppsala, Sweden, \verb+figueras@math.uu.se+
\item [(2)] Departament de Matem\`atiques i Inform\`atica, %
Universitat de Barcelona, %
Gran Via de les Corts Catalanes, 585, 08007 Barcelona, %
Spain, \verb+joan@maia.ub.es+
%\item [(3)] School of Mathematics, %
%Georgia Institute of Technology, %
%686 Cherry St., Atlanta GA. 30332-0160, %
%USA, \verb+rafael.delallave@math.gatech.edu+
\item [(3)] Division of Mathematics, %
University of Dundee, %
Dundee, DD1 4HN, Scotland, \verb+jparker002@dundee.ac.uk+
\item[\textsuperscript\textasteriskcentered] Corresponding Author
\end{itemize}
}

\begin{abstract}
We present a method for computing reducible, normally hyperbolic, invariant tori with internal quasiperiodic dynamics in autonomous ordinary differential equation systems. The approach is based on the parameterization method of KAM theory; thus, it is a Newton scheme with small divisors. Since the inner dynamics of the torus is prescribed, the corresponding system parameters for which such a torus exists are simultaneously determined. The method is amenable to a form of pseudo-arclength continuation, enabling the traversal and computation of saddle-node bifurcations. We give explicit algorithms for the methods and demonstrate their applicability with two numerical examples.
\end{abstract}

{\footnotesize {\it 2020 Mathematics Subject Classification: %
 37M20% Computational methods for dynamical systems.
, 34C45% Invariant manifolds
, 34C23%Bifurcation theory for ordinary differential equations
, 37C55%Periodic and quasiperiodic flows and diffeomorphisms
, 65P30%Numerical bifurcation problems
} 

{\bf Keywords:} 
Invariant torus%
; Normally Hyperbolic Invariant Manifold%
; Quasiperiodic saddle-node bifurcation%
; Parameterization method%
; KAM theory%
; Pseudo-arclength continuation
}

\newpage
{\small\tableofcontents
}
\newpage

\pagestyle{myheadings}
\markboth{\myshorttitle{}}{J.Ll. Figueras, J. Gimeno, and J. Parker}% \qquad draft, please do not circulate}

\section{Introduction}

Quasiperiodicity, dynamics generated by two or more incommensurate frequencies, is a central phenomenon in nonlinear dynamics. In phase space, such a motion is supported on invariant tori. For example, in Hamiltonian systems, quasiperiodic tori are abundant (full measure) in the integrable limit, and KAM theory explains their persistence under perturbation \citep{Kolmogorov1954,Arnold1963Proof,Moser1962,BroerHuitemaSevryuk1996,DeLaLlave2001KAM,Chierchia2003,Haro2016Parameterization}. This motivated the development of accurate numerical methods to compute and continue invariant tori; see for example \citep{Jorba2001Numerical, SchilderOsingaVogt2005,HaroDeLaLlave2006PartII,Haro2016Parameterization,CallejaCellettiGimenoLlave2022,CallejaCellettiGimenoLlave2024}.

In dissipative (non-conservative) systems, where the forward flow contracts phase-space volume, quasiperiodicity is not, \emph{a priori}, a generic phenomenon. Instead, computational efforts have often focused on finding periodic orbits\footnote{Periodic orbits \textit{are} invariant tori: they are invariant $1$-tori embedded in phase space. However, with only one frequency (rather than the two or more required for quasiperiodicity), the mathematics and numerics are much simpler; in particular, they are not subject to small-divisor issues or phase locking. Therefore, when we say \emph{invariant tori}, we mean $d$-tori with $d\geq 2$.} \citep{viswanath2003symbolic,figueras2017numerical}, which, under certain assumptions on the system, are known to be dense on the attractor and therefore sufficient for understanding the flow \citep{Cvitanovic91, ChaosBook}. Nevertheless, periodic orbits can undergo \textit{Neimark--Sacker bifurcations}, from which a normally stable or hyperbolic invariant torus is created.
Numerical evidence suggests that many dissipative systems do indeed exhibit structurally and dynamically stable invariant tori with either quasiperiodic or phase-locked internal dynamics \citep{albers2006routes}. In particular, in fluid dynamics, the breakdown of an invariant torus arising from a Neimark-Sacker bifurcation has been hypothesized \citep{RuelleTakens1971,newhouse1978occurrence} and has been observed in experiments and simulations \citep{swinney1978transition,mainieri1989two,van2005quasi} to be a dominant route to turbulence in many configurations of the Navier--Stokes equations. Furthermore, when a system is \textit{hyperchaotic}, meaning that the attractor has more than one expanding direction, it becomes possible for hyperbolic invariant tori to be embedded within the attractor \citep{parker2022invariant}, and this quasiperiodic behaviour is believed to be relevant when attempting to describe the physics underlying high-dimensional chaos \citep{doohan2022state,song2026multiscale}. It has been speculated that in hyperchaotic scenarios, quasiperiodic invariant tori should be considered analogously to periodic orbits in low-dimensional chaos, and taken into account when using invariant solutions to compute statistics for the system \citep{cvitanovic2007continuous, parker2023predicting}. To date, the main focus of this effort has been on the special class of quasiperiodic solutions which arise when a system has a continuous symmetry, in which case the resulting invariant $2$-tori are known as ``relative periodic orbits'' within this subfield of dynamical systems. With a continuous symmetry, the computation of the quasiperiodic solutions can be reduced to the problem of finding periodic solutions in the quotient of the system obtained by identifying all symmetry-related states \citep{lopez2005relative,budanur2015periodic,parker2022variational}.

In the present work, we will not make any assumptions about the existence of a continuous symmetry, or any other special features of the system. Unlike much previous work, we do not assume the system is conservative.
The main contributions of this paper are:
\begin{enumerate}
\item a parameterization-based Newton scheme for computing normally hyperbolic quasiperiodic invariant tori in autonomous ODEs, together with their normal bundles and corrected system parameters;

\item an adapted continuation strategy with an unfolding parameter that allows the computation and traversal of saddle-node bifurcations of invariant tori; and

\item numerical demonstrations on benchmark models illustrating convergence, continuation behavior, and computational performance.
\end{enumerate}

Some previous work has focused on computing invariant tori in dissipative flows. Two broad approaches are possible: either one parameterizes the full torus in the state space of the flow, or one uses a (generalized) Poincar\'e section to transform the flow into a discrete-time map and then applies existing techniques for computing invariant circles or fixed points of maps. The former approach, if applied na\"ively, quickly encounters numerical conditioning issues \citep{Jorba2001Numerical}, so many authors use the latter approach \citep{kaas1985computation,lan2006newton,JorbaO09,sanchez2010computation}. Nevertheless, there are significant advantages to working directly with the vector field via the so-called \textit{parameterization method} \citep{Haro2016Parameterization}. First, the vector-field approach avoids the high computational cost and potential numerical instabilities associated with accurately integrating trajectories and their derivatives between sections, which is particularly relevant for high-dimensional systems or very stiff dynamics. Second, working with the flow preserves the continuous-time symmetry of the system and allows for a more natural treatment of the tangent bundle, which always contains the direction of the flow. Finally, implementing KAM-like schemes directly on the vector field avoids the need to find a suitable global transverse section, which can be non-trivial for complex invariant tori or when parameters vary through bifurcations. Moreover, parameterizing the torus naturally facilitates rigorous validation of the numerics without requiring a rigorous timestepper \cite{FiguerasHaroLuque2017}; we do not pursue this direction in the present work.

%The approach of this paper is to simultaneously parameterize both the full invariant torus together with its hyperbolic directions. This avoids inverting the large, poorly conditioned matrices that arise from a na\"ive application of Newton's method to this problem \citep{CabreFontichDeLaLlave2003, HaroDeLaLlave2006PartI}. 
%Particularly relevant previous numerical efforts with similar parameterization methods include \citep{HaroDeLaLlave2006PartII}, which focuses on invariant tori in maps \red{quasiperiodically forced??} (the discrete-time analogue of the present work), \red{finish}. These works are all underpinned by variations of KAM theory for each respective situation \red{citations}

%In this work, consider \red{todo}

The approach of this paper is to simultaneously converge, with a Newton method, parameterizations of both the full invariant torus together with its hyperbolic directions. This avoids inverting the large matrices that arise from a na\"ive application of Newton's method to a parameterized quasiperiodic torus, which is necessarily ill-conditioned because of the existence of small divisors. 
This fits into a broad family of methods which started with the work of \citet{broer1997} and \citep{cabre2003i,cabre2003ii,cabre2005} who described general parameterization methods for invariant manifolds of discrete-time dynamical systems.
Similar methods have been extended, adapted and applied to discrete-time and continuous-time `skew-product' systems \citep{HdlLL1,HdlLL2,HdlLL3,JorbaO09,figueras2016parameterization,GimenoJZ22}; partially integrable Hamiltonian systems \citep{haro2019, figueras2024}; and periodically- and quasiperiodically-forced Hamiltonian systems \citep{calleja2025}. For an introduction to the method for normally hyperbolic invariant tori in discrete-time systems, we recommend \citet{canadell2016newton}, whose method is directly analogous to our own. 

Our parameterization method is amenable to branch continuation, in which a converged solution at one choice of system parameters is used to initialize the Newton method at different nearby parameters. Furthermore, we can modify the approach to allow \textit{pseudo-arclength continuation}, in which the continuation parameter is now, to leading order, the arclength along the curve instead of any external parameter of the system. We hold fixed the $d$ frequencies of the quasiperiodic internal dynamics, in order to avoid problems of continuation associated with crossing Arnold tongues. This means that $d+1$ parameters of the system are corrected at each iteration of the continuation procedure, as finite steps are made along the curve. Pseudo-arclength continuation has been combined with different forms of the parameterization method previously. \Citet{vitolo2011quasi} studied invariant circles and their bifurcations for dissipative maps; the present work is the continuous-time analogue of their study of saddle-node bifurcations, and we do not restrict to the case $d=2$.

Bifurcations of tori, including the saddle-node bifurcations we study here and more complicated scenarios, are subtle but well understood theoretically \citep{BroerHanssmannYou2005,HanssmannVanDerMeer2005,Hanssmann2005a,Hanssmann2006,Hanssmann1998,Hanssmann2004,IoossLos1988,Los1988,SekikawaInaba2016,KamiyamaInabaSekikawaEndo2014,KuznetsovSedova2016,chenciner1979bifurcations, chenciner1979persistance,Chenciner1985}. Of particular relevance is the work of Gonz\'alez, Haro and de la Llave \citep{gonzalez2014singularity,gonzalez2022efficient}, who give both in-depth analysis and practical computational methods for \textit{non-twist} tori, which are an important special case in Hamiltonian systems.
Compared to existing numerical methods for bifurcations of quasiperiodic invariant tori, our contributions are that they can be applied far from \emph{normal form} and avoid the use of change of variables that would put them near normal form. Although the algorithm does not require a preliminary reduction of the vector field to normal form, the computed objects provide such a reduction \emph{a posteriori}, in local coordinates near the torus. More precisely, after the torus, the distinguished normal direction, and the remaining reducible normal bundle have been computed, to leading-order the dynamics can be represented locally in coordinates $(\theta,h,z)$ as 
\[
  \dot \theta = \omega,\qquad
  \dot h = h^2-\vartheta,\qquad
  \dot z = \Lambda(\vartheta)z,
\]
up to higher-order terms, where $h$ is the coordinate along the distinguished direction and $z$ denotes the remaining hyperbolic normal coordinates. The continuation parameter $\varsigma$ is then used as a regular coordinate along the branch: for each prescribed value of $\varsigma$, the algorithm determines the system parameters $\mu$ and $\vartheta$ for which the corresponding reducible torus exists. In this sense, the method avoids putting the original system into normal form as an input, but recovers the saddle-node normal-form structure from the computed parameterization.

The a posteriori theorem underlying the numerical approach developed here, together with the precise analytic assumptions needed for validation of the computed objects, is proved in the companion paper \cite{FiguerasGLP2027}. The emphasis of the present paper is instead on the derivation, implementation, and numerical performance of the algorithms.

The concepts and basic algorithm for computing an invariant torus, given a sufficiently good initial guess, are discussed in \cref{sec:computation}.
In \cref{sec:bifurcation} we then give a modified version of this algorithm that incorporates a form of pseudo-arclength continuation. %, which allows us to continue a torus converged at one value of a system parameter to the corresponding torus at a different parameter value. %In particular, this makes it possible to traverse saddle-node (fold) bifurcations in which the stability of the torus changes. Without this modification to the algorithm, converging a torus close to a bifurcation becomes numerically very poorly conditioned, as the assumption of normal hyperbolicity begins to break down \jg{for me this type of sentence is for a previous paragraph itself}.
In \cref{sec:numerical}, we demonstrate our methods on two example systems, a five-dimensional ODE and three-dimensional ODE, in both cases successfully continuing the invariant torus around a saddle-node bifurcation.  Concluding remarks are given in \cref{sec:discussion}.

%\jg{Thinks that may have not stated clearly in the intro. parameter dependence, non-conservative vs conservative, advice of eigenvalues assumptions. Still not mentioning of the theorem statement}

\section{Computation of normally hyperbolic invariant tori}% \jg{in a non-conservative system?}}
\label{sec:computation}
This section is organized into four subsections: {\sl i)} the geometric setting and invariance equations; {\sl ii)} almost-invariant objects and linearized corrections; {\sl iii)} decoupled correction equations for torus/parameters and normal bundle; and {\sl iv)} explicit algorithms.

\paragraph{Notation.} 
Given an approximate torus $(\kko,\prmo)$, we use $\eetor$ for the torus defect, $\eered$ for the reducibility defect, $\fo=(\tf\ \nf)$ for the frame, and $(\xi,\eta)$ for correction/error coordinates in this frame.

\subsection{Hyperbolic tori with a fixed basic frequency}

Normally hyperbolic invariant manifolds are robust under perturbations \citep{HirschPughShub1977}. For quasiperiodic tori, however, prescribing a specific Diophantine internal frequency typically requires parameter correction. This is precisely why the Newton--KAM strategy below simultaneously updates the embedding and selected system parameters.

We consider ordinary differential equation systems defined on a subset of $\R ^n\times \R ^{d}$ ($n$-dimensional phase space, $d$-dimensional parameter space) by
\begin{equation} \label{eq.model}
 \dot \x(t) = \vf(\x(t); \prm).
\end{equation}
where we will always assume that the vector field $\vf$ is smooth enough to ensure the validity of the second-order Taylor expansions used in our Newton schemes. 
Here $\prm$ plays a parameter role. 

A torus (or, in general, a manifold) on phase space is invariant under the flow if, at any point on it, its tangent space contains the vector field. In particular, given an embedding $\kk\colon\T^d\to \R^n$, the points of the torus are given by $\kk(\th)$, $\th\in\T^d$, and the tangent space is spanned by the column vectors of $\dop \kk(\th)$. Thus, the invariance condition translates to 
\[\operatorname{rank}\Bigl( \dop \kk(\th), \vf \bigl(\kk(\th); \prm \bigr) \Bigr)=d,\] 
or, more explicitly, there exists $f\colon \T^d\to\R^d$ such that 
\[\dop \kk(\th)f(\th)=\vf(\kk(\th); \prm),\]
where $f$ is the pullback vector field onto the ideal torus $\T^d$.
That the inner dynamics on the torus is quasiperiodic is equivalent to $f(\th)=\omg^\top$, with  $\omg \in \R ^{d}$ ergodic (i.e. $k ^\top \omega \neq 0$ for all $k \in \Z ^{d} \setminus\{0\}$).

If $\omg$ is ergodic, $\kk\colon \T^d\to \R^n$ and $\prm $ are said to satisfy the invariance equation when
\begin{equation} \label{eq.inv}
 \li _\omg [\kk] (\th) + \vf (\kk(\th); \prm) = 0 \qquad \text{for all } \th \in \T ^{d},
\end{equation}
where $\li _\omg [\kk](\th) \bydef - \dop \kk(\th)\omg$. The orbits on the torus are given by $\x(t) = \kk(\th + t\omg)$, $\th\in\T^d$. In this paper we are interested in the case that $\omg $ is Diophantine (i.e. there are constants $\nu > 0$ and $\tau \ge d$ such that 
\[
|k ^\top \omg| \ge \nu |k| _1 ^{-\tau} \qquad \text{for all } k \in \Z ^{d}\setminus\{0\},
\] 
where $|k| _1 \bydef |k_1| + \dotsb + |k_d|$).

Of interest in this paper are \emph{normally hyperbolic invariant tori}, i.e. invariant tori such that the linearized dynamics in the normal directions is hyperbolic and dominates the dynamics in the tangent directions. This translates into the existence of a splitting of the tangent space at each point of the torus into stable, unstable, and tangent bundles, which are invariant under the linearized dynamics, and such that the contraction/expansion rates in the normal directions dominate those in the tangent direction. In the case of ODEs, the tangent bundle is always neutral since it contains the direction of the flow. The normal hyperbolicity condition translates into the existence of a bundle where the linearized dynamics leave it invariant and is hyperbolic. In this paper we will only tackle \emph{reducible normally hyperbolic invariant tori}: the ODE on the bundle is conjugate to a \textbf{constant} coefficient ODE with hyperbolic matrix.

% \small\jg{Why there is a small here?}

% Throughout this paper, $M(n, d, \R)$ denotes the space of $n \times d$ real matrices \jg{In my opinion this notation is inconsistently applied through out the paper}. 
The tangent bundle to the torus is given by $\tf \bydef \dop \kko\colon \T ^{d} \to \R ^{n\times d}$ and its normal bundle by $\nf \colon \T ^{d} \to \R ^{n\times (n-d)}$.
Notice that $\tf$ is invariant under the linearized dynamics: it 
satisfies the invariance equation
\begin{equation} \label{eq.tfinveq}
\li _\omg [\tf](\th)  + \dop _\x \vf (\kko(\th) ; \prmo) \tf(\th)  = 0, \quad \text{ for all } \th \in \T ^{d}.
\end{equation}

On the normal bundle is where all infinitesimal dynamics happen. In particular, since we are dealing with reducible tori there exists a constant matrix $\lamb_{\nf}$ such that $\nf$ satisfies the invariance equation
\begin{equation} \label{eq.normalinveq}
 \li _\omg [\nf](\th) + \dop _\x \vf(\kko(\th); \prmo) \nf(\th) - \nf(\th) \lamb _{\nf} = 0, \quad \text{ for all } \th \in \T ^{d}.
\end{equation}

\subsection{Almost invariant tori}

% In practice, a computer may only get an approximation of the equations \eqref{eq.inv}, \eqref{eq.tfinveq}, and \eqref{eq.normalinveq} (i.e. the equations are zero up to some number of digits). That combined by suitable a-posteriori theorems one can ensure the existence in a quantifiable neighborhood around the given numerical approximation.   The notion of ``almost'' invariant tori captures this idea of approximation.

In practice, the computer only produces approximations of the invariance and reducibility equations, i.e. \eqref{eq.inv} and  \eqref{eq.tfinveq} are only zero up to some finite number of digits. When these defects are sufficiently small and the required nondegeneracy and small-divisor conditions are satisfied, an a posteriori theorem can be used to validate the existence of a true invariant torus near the numerical approximation; the relevant theorem and its quantitative hypotheses are given in the companion paper \cite{FiguerasGLP2027}. The notion of ``almost'' invariant tori captures this idea of approximation.

\begin{dfn}
  A pair $(\kko, \prmo)$ is called an \emph{almost invariant torus} for the system \eqref{eq.model} and $\eetor \colon \T ^{d} \to \R ^n$ is called \emph{error function} if they satisfy
\begin{equation} \label{eq.almostinveq}
 \li _\omg [\kko] (\th) + \vf (\kko(\th); \prmo) = \eetor(\th) \qquad \text{for all } \th \in \T ^{d}.
\end{equation}
\end{dfn}

\begin{rmk}
  Note that if $\eetor = 0$, then the associated pair is an invariant torus. Also note that we abuse notation and call $(\kko, \prmo)$ an almost invariant torus, although $\prmo$ is just a parameter.
\end{rmk}

One goal in this paper is that, given an almost invariant torus $(\kko, \prmo)$, we want to find corrections for $(\kk, \prm)$ such that the error function is smaller than a given tolerance.
The corrections $(\hkk, \hprm)$ satisfy an additive relationship
\begin{equation}\label{eq.tormucor}
    (\kk, \prm) = (\kko, \prmo) + (\hkk, \hprm).
\end{equation} 

As is customary, to compute the corrections $(\hkk, \hprm)$ we are going to use a first order approximation of a neighborhood of $(\kko, \prmo)$. In such a neighborhood we are going to establish local coordinates given by a local basis
\begin{equation*}
 \Span (\fo(\th)) \cong \R ^n, \qquad \fo \bydef 
 \begin{pmatrix}
  \tf & \nf
 \end{pmatrix} \quad \text{ for all } \th \in \T ^{d},
\end{equation*}
where $\tf \bydef \dop \kko\colon \T ^{d} \to \R ^{n\times d}$ is the tangent and $\nf \colon \T ^{d} \to \R ^{n\times (n-d)}$ the normal bundle. 
Notice that if $(\kko, \prmo)$ is an invariant torus, then $\tf$ is invariant under the linearized dynamics: it 
satisfies the invariance equation \eqref{eq.tfinveq}.

Within an iteration correction procedure, the frame $\fo$ changes as well since it depends on $(\kko, \prmo)$. Therefore, to proceed with a Newton process we will also need to correct the frame $\fo$. A first lemma says if the error $\eetor$ is small, then $\tf$ is almost invariant:
\begin{lem} \label{lem.tangentredee}
 If $(\kko, \prmo)$ is an almost invariant torus of \eqref{eq.model}, then $\li _\omg [\tf] + \dop _\x \vf (\kko; \prmo) \tf = \dop \eetor$.
\end{lem}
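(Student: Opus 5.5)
The plan is to differentiate the almost-invariance equation \eqref{eq.almostinveq} with respect to $\th$. Since $\kko$, $\vf$ and hence $\eetor$ are smooth enough (the standing assumption after \eqref{eq.model}), both sides of
\[
 -\dop \kko(\th)\,\omg + \vf(\kko(\th);\prmo) = \eetor(\th)
\]
can be differentiated in $\th\in\T^d$. This gives an identity between $n\times d$ matrix-valued functions on $\T^d$.

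The key steps, in order, are as follows.

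\textbf{Step 1 (the Lie-derivative term).} Differentiate $\li_\omg[\kko](\th) = -\dop\kko(\th)\omg$. Column by column, the $j$-th column of $\dop\bigl(\dop\kko\,\omg\bigr)$ is $\partial_{\th_j}\sum_i \partial_{\th_i}\kko\,\omg_i = \sum_i \partial_{\th_i}\bigl(\partial_{\th_j}\kko\bigr)\omg_i$. The exchange of the order of partial derivatives is justified by equality of mixed partials, since $\kko$ is at least $C^2$. That sum is exactly the $j$-th column of $\dop\tf\,\omg$, so $\dop\bigl(\li_\omg[\kko]\bigr) = \li_\omg[\tf]$, where $\li_\omg$ acts columnwise on $\tf=\dop\kko$.

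\textbf{Step 2 (the vector-field term).} By the chain rule, with $\prmo$ fixed, $\dop\bigl(\vf(\kko(\th);\prmo)\bigr) = \dop_\x\vf(\kko(\th);\prmo)\,\dop\kko(\th) = \dop_\x\vf(\kko;\prmo)\tf$.

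\textbf{Step 3.} Add Steps 1 and 2 and equate the sum with $\dop\eetor$, the derivative of the right-hand side. This yields the claimed identity.

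No real obstacle is expected; the only point needing care is Step 1. There one has to check that $\li_\omg$ commutes with differentiation, since $\omg$ is constant and mixed partials commute. One also has to interpret $\li_\omg[\tf]$ columnwise, which is consistent with its use in \eqref{eq.tfinveq}.
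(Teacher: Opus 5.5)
Your proposal is correct and follows the paper's proof: differentiate \eqref{eq.almostinveq} in $\th$, use $\dop \li_\omg[\kko] = \li_\omg[\tf]$, and apply the chain rule to the vector-field term. Your columnwise argument (constant $\omg$, equality of mixed partials) just spells out what the paper states in one line.
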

\begin{proof}
 By assumption, \eqref{eq.almostinveq} holds. Now just take derivative w.r.t. $\th$ and note that $\dop \li _\omg[\kko] = \li _\omg[\tf]$.
\end{proof}
\begin{rmk}
  The size of $\dop \eetor$ is comparable to the size of $\eetor$ in most norms, so $\tf$ is an almost invariant bundle. For example, in the analytic norm $\|\cdot\|_\rho$ defined on a complex strip of length $\rho > 0$, we have $\|\dop \eetor\|_{\rho-\varepsilon} \le \varepsilon ^{-1} \|\eetor\|_\rho$ for some $\varepsilon \in (0,1)$.
\end{rmk}

Thus, an almost invariant torus has a first order error $\eered \colon \T ^d \to \R ^{n \times (n-d)}$, called \emph{reducibility error}, given by the normal bundle of $\kko$, and it is defined as  
\begin{equation} \label{eq.normalredee}
 \li _\omg [\nf](\th) + \dop _\x \vf(\kko(\th); \prmo) \nf(\th) - \nf(\th) \lamb _{\nf} = \eered(\th), \quad \text{ for all } \th \in \T ^{d},
\end{equation}
where $\lamb _{\nf}$ is an $(n-d)$-by-$(n-d)$ matrix. In this paper we study tori under the following assumptions:
\begin{enumerate}
\renewcommand{\theenumi}{\bf A\arabic{enumi}}
\renewcommand{\labelenumi}{{\theenumi})}
    \item \label{a.1} The matrix $\lamb _{\nf}$ is hyperbolic with $n _s$ stable components and $n _u$ unstable ones (s.t. $n _s + n _u = n-d$).
    
    \item \label{a.2} The matrix $\lamb _{\nf}$ admits, after a possible change of coordinates, a diagonal-block matrix
    \begin{equation*}
        \lamb _{\nf} = 
        \begin{pmatrix}
            \lamb ^s \\ & \lamb ^u
        \end{pmatrix},
    \end{equation*}
    with $\lamb ^s = \diag(\lambda ^s_i)$ and $\lamb ^u = \diag(\lambda ^u_j)$ for $i = 1, \dotsc, n _s$ and $j = 1, \dotsc, n _u$.
    
    \item \label{a.pairdiff} All diagonal entries for $\lamb ^s$ and $\lamb ^u$ are pairwise different respectively.
    
    \item \label{a.lambreal} The matrix $\lamb _{\nf}$ is real and so the normal bundle $\nf$ too.
\end{enumerate}

Assumption~\ref{a.1} involves the dynamics splitting hyperbolic directions in stable and unstable. \ref{a.2} changes the frame $\fo$ and it allows specific algorithmic treatments based on these attracting and repelling directions. 
Assumption~\ref{a.pairdiff} ensures the solvability of cohomological equations that appear when finding the corrections $\hkk$. Finally, \ref{a.lambreal} simplifies the computational algorithms avoiding complex number arithmetic. Insights of works avoiding \ref{a.lambreal} can be found in \cite{BarcelonaGJ2026} and references therein which uses Hermitian relationships and hypergeometric transformations.

\subsection{Torus and parameter corrections} \label{sec.torcorrection}
At each Newton step, we project the defect equation onto tangent and normal coordinates of the moving frame $\f$. This avoids solving one large poorly-conditioned system directly in ambient coordinates and yields cohomological equations that can be solved mode-by-mode in Fourier space.

After a first-order Taylor expansion of \eqref{eq.almostinveq} at \eqref{eq.tormucor},
\begin{equation} \label{eq.linalmostinveq}
\eetor + \li _\omg[\hkk] + \dop _\x \vf (\kko; \prmo) \hkk + \dop _\prm \vf (\kko; \prmo) \hprm + T[\kk, \prm] = 0,
\end{equation}
where 
\begin{equation*}
 T[\kk,\prm](\th) \bydef \vf(\kk(\th); \prm) - \vf (\kko(\th); \prmo) -  \dop _\x \vf (\kko(\th); \prmo) \hkk(\th) - \dop _\prm \vf (\kko(\th); \prmo) \hprm,
\end{equation*}
contains the higher Taylor order terms. We use the frame $\fo$ to express the given error $\eetor$ and the unknown correction $\hkk$ in coordinates. That is, pairs $(\xi ^{\tf}, \xi ^{\nf})$ and $(\eta ^{\tf},\eta ^{\nf})$ such that
\begin{equation} \label{eq.coordhkk}
 \hkk(\th) = \tf(\th) \xi ^{\tf}(\th) + \nf (\th)\xi ^{\nf}(\th) \quad \text{ and } \quad
 \eetor(\th) = \tf(\th) \eta ^{\tf}(\th) + \nf (\th)\eta ^{\nf}(\th),
\end{equation}
where $(\bullet) ^{\tf} \colon \T ^{d} \to \R ^{d}$ and $(\bullet) ^{\nf} \colon \T ^{d} \to \R ^{n-d}$ for $(\bullet) \in \{\xi, \eta\}$.

Note that given $\eetor$, we get $\eta = (\eta ^{\tf},\eta ^{\nf})$ by computing $\eta (\th) = \fo(\th) ^{-1} \eetor(\th) $. Similarly, once $\xi= (\xi ^{\tf}, \xi ^{\nf})$ is discovered, then the correction  will be $\hkk(\th) = \fo(\th) \xi(\th)$.

\medskip

Going back to \eqref{eq.linalmostinveq}, neglecting quadratic error term $T$, plugging \eqref{eq.coordhkk} into \eqref{eq.linalmostinveq}, and using the reducibility error \eqref{eq.normalredee}, we obtain
\begin{equation} \label{eq.projinveq}
  \tf \eta ^{\tf} +  \nf \eta ^{\nf} + 
 \tf \li _\omg[\xi ^{\tf}] + \nf \li _\omg [\xi ^{\nf}] + 
 \dop \eetor \xi ^{\tf} + 
 (\nf \lamb _{\nf} + \eered)\xi ^{\nf} + 
 \dop _\prm \vf(\kko; \prmo) \hprm = 0.
\end{equation}
We also neglect the terms $\dop \eetor \xi ^{\tf}$ and $\eered\xi ^{\nf}$ in \eqref{eq.projinveq} since they belong to higher order terms.%. However, in the rigorous part these terms will be required to be bounded.

By means of the frame $\fo$, we obtain coordinates $b = (b ^{\tf}, b ^{\nf})$
\begin{equation*}
 \f(\th)^{-1}\dop _\prm \vf (\kko(\th); \prmo) \hprm =  \tf(\th) b ^{\tf}(\th) \hprm + \nf(\th) b ^{\nf}(\th) \hprm,
\end{equation*}
where $b ^{\tf}\colon \T ^{d} \to \R ^{d\times d}$ and $ b ^{\nf} \colon \T ^{d} \to \R ^{(n-d)\times d}$. 

\medskip

Thus, coordinate-wise \eqref{eq.projinveq} is equivalent to the equations
\begin{align}
 \li _\omg [\xi ^{\tf}](\th) + b ^{\tf}(\th) \hprm + \eta ^{\tf}(\th) &= 0, \label{eq.xiLcohom} \\
 \lamb _{\nf} \xi^{\nf} (\th) + \li _\omg [\xi ^{\nf}](\th) + b ^{\nf}(\th) \hprm + \eta ^{\nf}(\th) &= 0, \label{eq.xiNcohom}
\end{align}
which are solvable under small divisor conditions.

\medskip

The tangent component \eqref{eq.xiLcohom} is solvable in terms of Fourier transformations. Indeed, let $\fou \eta _k ^{\tf}, \fou \xi _k ^{\tf} \in \C^{d}$ and $\fou b _k ^{\tf} \in \C ^{d \times d}$ be Fourier coefficients so that for all $k \in \Z ^{d}$, 
\begin{equation*}
 \begin{split}
  \fou \eta _0^{\tf} + \fou b _0 ^{\tf} \hprm &=0, \qquad \text{for }|k| = 0\text{, } \hprm \text{ is solved}, \\
  \fou \xi _0^{\tf} &=0, \qquad \text{normalization condition.}\\
  -\I (k \cdot \omg) \fou \xi _k^{\tf} + \fou \eta _k^{\tf} + \fou b _k ^{\tf} \hprm &=0, \qquad \text{for }|k| \ne 0\text{, } \xi _k^{\tf} \text{ is solved},
 \end{split}
\end{equation*}
is solvable as long as $\fou b _0 ^{\tf}$ is invertible.

\smallskip

The normal component \eqref{eq.xiNcohom} is solvable in Fourier transformations. Indeed, let $\fou \eta _k ^{\nf}, \fou \xi _k ^{\nf} \in \C^{n-d}$ and $\fou b _k ^{\nf} \in \C ^{(n-d) \times d}$ be Fourier coefficients so that,
\begin{equation} \label{eq.xiNsol}
  (\lamb _{\nf} - \I (k \cdot \omg) )\fou \xi _k^{\nf} + \fou \eta _k^{\nf} + \fou b _k ^{\nf} \hprm =0, \qquad \text{for all  }k \in \Z ^{d}.
\end{equation}
Under assumption \ref{a.1}, $\lamb _{\nf}$ is hyperbolic (it has no eigenvalues in $\I \R$), \eqref{eq.xiNsol} is solvable in terms of $\fou \xi _k^{\nf}$.

%\begin{lem}\marginpar{J-Ll: I suggest to remove this lemma}
%\jg{for a more technical part}
% $\lamb _{\nf} \xi^{\nf} (\th) + \li _\omg [\xi ^{\nf}](\th) + b ^{\nf}(\th) \hprm + \eta ^{\nf}(\th) + \eered \fou \xi _k^{\nf} =0$ is solvable and moreover
% \[ \| \xi ^{\nf}\| _\rho \leq C \| \eta ^{\nf} + b ^{\nf} \hprm\| _\rho \]
%\end{lem}

\subsection{Normal bundle correction}
The torus and parameter corrections described in Section~\ref{sec.torcorrection} use the frame $\fo$. The tangent is already updated with the torus, but the normal bundle $\nf$ must also be corrected to reduce the reducibility error $\eered$. The resulting equations have the same Fourier-solvable structure and preserve the stable/unstable block decomposition.

The reducibility condition \eqref{eq.normalredee} provides an error for $\nf$ and $\lamb _{\nf}$. Let us consider (unknown) corrections $\hnf$ and $\hLambda$ respectively. Moreover, $\hLambda$ is diagonal due to \ref{a.2}, and  
\begin{equation*}
    \hnf \bydef \tf Q ^{\tf} +  \nf Q ^{\nf},
\end{equation*}
for some $Q ^{\tf} \colon \T ^{d} \to \R ^{d\times (n-d)}$ and $Q ^{\nf} \colon \T ^{d} \to \R ^{(n-d)\times (n-d)}$ to be determined. Plugging these corrections into \eqref{eq.normalredee} and neglecting second order terms we derive
\begin{equation} \label{eq.normalredee1}
\begin{split} 
 0&=\eered + 
 \li _\omg [\tf] Q ^{\tf} + \li _\omg [\nf] Q ^{\nf}  + 
 \tf \li _\omg [Q ^{\tf}] + \nf \li _\omg [Q ^{\nf}] + \\ & \qquad
 \dop _\x \vf(\kko; \prmo) (\tf Q^{\tf} + \nf Q^{\nf}) - 
 \nf \hLambda - (\tf Q^{\tf} + \nf Q^{\nf}) \lamb _{\nf} - (\tf Q^{\tf} + \nf Q^{\nf}) \hLambda \\
 &= \eered + \tf (\li _\omg [Q ^{\tf}] - Q ^{\tf} \lamb _{\nf}) + \nf (\li _\omg[Q^{\nf}] - Q ^{\nf}\lamb _{\nf} + \lamb _{\nf} Q^{\nf} - \hLambda) + h.o.t.
%  &=\eered + \eered Q ^{\nf} + \nf \lamb _{\nf} Q + \nf \li _\omg [Q] - \nf \hLambda - \nf Q \lamb _{\nf} - \nf Q \hLambda 
\end{split}
\end{equation}

\smallskip

Using the frame $\fo$ to express the error $\eered$ in $\eta _{\texttt{red}} = (\eta ^{\tf}_{\texttt{red}}, \eta ^{\nf}_{\texttt{red}})$, i.e.
\[
 \eta(\th) = \f(\th) ^{-1}\eered(\th)  =  \tf(\th) \eta _{\texttt{red}}^{\tf}(\th) + \nf(\th) \eta _{\texttt{red}}^{\nf}(\th) .
\]
where $\eta _{\texttt{red}}^{\tf} \colon \T ^{d} \to \R ^{d \times (n-d)} $ and $\eta _{\texttt{red}}^{\nf} \colon \T ^{d} \to \R ^{(n-d)\times (n-d)} $.  Thus, component-wise \eqref{eq.normalredee1} must satisfy
\begin{align}
 \li _\omg [Q ^{\tf}](\th) - Q ^{\tf}(\th) \lamb _{\nf} &= -\eta _{\texttt{red}}^{\tf}(\th) , \label{QL} \\
 \li _\omg [Q ^{\nf}](\th) + \lamb _{\nf} Q ^{\nf}(\th)  -  Q ^{\nf}(\th)\lamb _{\nf} - \hLambda &= -\eta _{\texttt{red}}^{\nf}(\th), \label{QN}
\end{align}
which are solvable as long as extra assumptions on the $\lamb _{\nf}$ are verified.

\smallskip

To solve \eqref{QL}, we solve by Fourier, for all $k \in \Z ^{d}$,
\begin{equation*} \label{QLfousolve}
  (-\lambda _j - \I k \cdot \omg)(\fou Q ^{\tf}_{i,j}) _k =-((\fou \eta ^{\tf}_{\texttt{red}})_{i,j})_k.
\end{equation*}
To solve \eqref{QN}, we use \ref{a.1} and \ref{a.2} to write that
\begin{equation*}
 Q ^{\nf} \bydef 
 \begin{pmatrix}
  Q ^{ss} & Q ^{su}   \\
  Q ^{us}   & Q ^{uu}
 \end{pmatrix}, \qquad
 \eta _{\texttt{red}}^{\nf} = 
 \begin{pmatrix}
  \eta ^{ss} _{\texttt{red}} & \eta ^{su} _{\texttt{red}} \\ \eta ^{us} _{\texttt{red}} & \eta ^{uu} _{\texttt{red}} 
 \end{pmatrix},
\end{equation*}
and $\lamb ^s = \diag(\lambda _1^s, \dotsc, \lambda _{s}^s)$ and  $\lamb ^u = \diag(\lambda _{1}^u, \dotsc, \lambda _{u}^u)$, which implies that $\hLambda ^s$ and $\hLambda ^u$ are diagonal too (note that \ref{a.lambreal} keeps the correction in real numbers). Thus, the equations to solve are 
\begin{align*}
 \li _\omg [ Q ^{ss}] (\th) + \lamb ^s Q ^{ss}(\th) - Q ^{ss}(\th) \lamb ^s 
 - \hLambda ^s &= -\eta ^{ss} _{\texttt{red}}(\th) \\
 \li _\omg [ Q ^{su}] (\th) + \lamb ^s Q ^{su}(\th) - Q ^{su}(\th) \lamb ^u  &= -\eta ^{su} _{\texttt{red}}(\th) \\
 \li _\omg [ Q ^{us}] (\th) + \lamb ^u Q ^{us}(\th) - Q ^{us}(\th) \lamb ^s  &= -\eta ^{us} _{\texttt{red}}(\th) \\ 
 \li _\omg [ Q ^{uu}] (\th) + \lamb ^u Q ^{uu}(\th) - Q ^{uu}(\th) \lamb ^u 
 - \hLambda ^u &= -\eta ^{uu} _{\texttt{red}}(\th).
\end{align*}
In Fourier coefficients, for all $k \in \Z ^{d}$,
\begin{align*}
  - (\hLambda ^s)_{i,i}& =-((\fou \eta ^{ss}_{\texttt{red}})_{i,i})_k & |k|&= 0  \\
  - (\hLambda ^u)_{i,i}& =-((\fou \eta ^{uu}_{\texttt{red}})_{i,i})_k & |k|&= 0  \\
  (\fou Q ^{ss}_{i,i})_0 = (\fou Q ^{uu}_{i,i})_0 &=0 \\
  (\lambda _i^s - \lambda _j^s)(\fou Q ^{ss}_{i,j}) _0& =-((\fou \eta ^{ss}_{\texttt{red}})_{i,j})_0 & |k| &= 0\text{, } i\ne j\\
  (\lambda _i^u - \lambda _j^u)(\fou Q ^{uu}_{i,j}) _0& =-((\fou \eta ^{uu}_{\texttt{red}})_{i,j})_0  & |k| &= 0\text{, } i\ne j\\
  (\lambda _i^s - \lambda _j^s - \I k \cdot \omg)(\fou Q ^{ss}_{i,j}) _k& =-((\fou \eta ^{ss}_{\texttt{red}})_{i,j})_k & |k| &\ne 0 \\
  (\lambda _i^u - \lambda _j^u - \I k \cdot \omg)(\fou Q ^{uu}_{i,j}) _k& =-((\fou \eta ^{uu}_{\texttt{red}})_{i,j})_k  & |k| &\ne 0
\end{align*}
and
 \begin{align*}
  (\lambda _i^s - \lambda _j^u - \I k \cdot \omg)(\fou Q ^{su}_{i,j}) _k& =-((\fou \eta ^{su}_{\texttt{red}})_{i,j})_k  \\
  (\lambda _i^u - \lambda _j^s - \I k \cdot \omg)(\fou Q ^{us}_{i,j}) _k& =-((\fou \eta ^{us}_{\texttt{red}})_{i,j})_k .
\end{align*}

% \begingroup
% \color{blue}
Detailed algorithms derived from these are given in Algorithm~\ref{sec:algorithms appendix}.
\Cref{alg.Kmu} alternates two correction stages per iteration: {\sl i)} torus/parameter correction from the projected invariance defect; and {\sl ii)} normal-bundle/eigenvalue correction from the reducibility defect. \Cref{alg.Kmuomg} follows the same structure but substitutes a parameter correction by one frequency component.
% among the corrected unknowns and considers.
% \endgroup

\section{Computation of saddle-node bifurcations}
\label{sec:bifurcation}
\subsection{Setup}
We first state the bifurcation setting and why the baseline algorithm degenerates near a saddle-node. Then we introduce the unfolding parameter, derive the modified correction equations, and finally provide an explicit algorithmic procedure.
Let $\dot \x = \vf (\x; \prm, \bifprm)$ be an ODE system with $(\x,\prm,\bifprm)\in \R ^{n+d+1}$ and let $\omg  \in \R ^d$ be a basic frequency vector. Let $\torbiftupleo\bydef (\kko, \prmo, \bifprmo, \fo, \lambo; \omg)$ be a tuple such that for all $\th \in \T ^d$
\begin{equation}
\begin{split}
 \li _\omg[\kko](\th) + \vf (\kko(\th); \prmo, \bifprmo) &= \eetor[\kko, \prmo](\th),\\
 \li _\omg[\fo](\th) + \dop _\x \vf (\kk(\th); \prm, \bifprmo) \fo(\th) - \fo(\th) \lambo &= \ee[\fo,\lambo](\th),
\end{split}
\end{equation}
where $\kko \colon \T ^d \to \R ^n$ is a torus parametrization, $\prmo \in \R ^d$ dissipative parameter, $\bifprmo\in \R$ continuation parameter, $\fo \colon \T ^d \to \R ^{n\times n}$ a frame, and $\lambo \in \R ^{n\times n}$ a diagonal matrix.

The functions $\eetor[\kko, \prmo]\colon \T ^d \to \R ^n$ and $\ee[\fo,\lambo]\colon \T ^d \to \R ^{n\times n}$ denote the error functions on the torus and the frame respectively, which eventually become smaller than a given tolerance, say $\tol$. Notice that $\eetor[\kko,\prmo]$ does not depend on the frame and, for construction, the error in the frame $\ee[\fo,\lambo]$ does not depend on the torus $\kko$ and the dissipative parameter $\prmo$  because we use the ``corrected'' $\kk$ and $\prm$ when computing the frame error.

\medskip

We now assume a distinguished entry on $\lamb$, say $\lambdabif$, so that slight changes of the bifurcation parameter $\bifprm$ makes $\Re (\lambdabif)$ cross the value $0$. Thus, $\lambo$ has the following form:
\begin{equation} \label{eq.lambobif}
 \lambo \bydef 
 \begin{pmatrix}
  \boldsymbol{0} \\ & \lambdabifo \\ & & \lamb _{\nfbifo} 
 \end{pmatrix}, \qquad 
 \lamb _{\nfbifo} \bydef \diag((\seed \lambda ^s) _1, \dotsc, (\seed \lambda ^s)_{n _s}, (\seed \lambda ^u)_1, \dotsc, (\seed \lambda ^u)_{n _u}),
\end{equation}
where, for simplicity we have considered $\lamb$ to be real, hence $\boldsymbol{0} \in \R ^{d\times d}$ corresponds to (already corrected) tangent direction, $\lambdabifo \in \R$ to the eigenvalue that will bifurcate under small variation of the bifurcating parameter $\bifprm$, and $\lamb _{\nfbifo} \in \R ^{(n - d-1) \times (n-d-1)}$ a diagonal matrix that splits in $n _s$ stable (i.e. $\Re ((\seed \lambda ^s) _i ) < 0$ for $i = 1, \dotsc, n _s$) and $n _u$ unstable such that $n _s + n _u = n-d-1$.

The entry order in \eqref{eq.lambobif} also fixes the order of the frame columns. Thus, $\fo$ has the form:
\begin{equation*}
    \fo (\th) \bydef 
    \begin{pmatrix}
        \dop \kk(\th) & \nfvbifo  (\th) & \nfbifo (\th)
    \end{pmatrix}
\end{equation*}
where $\dop \kk \colon \T ^d \to \R ^{n\times d}$ is (the corrected) tangent direction, $\nfvbifo \colon \T ^d \to \R ^{n\times 1}$ is the distinguished direction associated to $\lambdabifo$ that bifurcates w.r.t. the parameter $\bifprm$, and $\nfbifo\colon \T ^d \to \R ^{n\times (n-d-1)}$ the normal directions containing stable and unstable ones (and with real part different from zero).

\medskip

The target event is a saddle-node of invariant tori, detected when the distinguished normal rate crosses zero. Near this point, the standard formulation loses conditioning; therefore, we reformulate the correction step with an unfolding parameter so continuation remains regular through the turning point.
At the bifurcation, Algorithm~\ref{alg.Kmu} will fail, making it impossible to exactly get the tuple $\torbiftuplebif \bydef (\kkbif, \prmbif, \bifprmbif, \fbif, \lambbif)$. The reason is that Algorithm~\ref{alg.Kmu} assumes $\lamb _{\nfbifo}$ to have non-zero (and pairwise distinct) diagonal entries.

A generic saddle-node bifurcation arises due to a non-suitable parametrization of the solution with respect to the bifurcation parameter, see Figure~\ref{fig.snbif}.
If $\bifprm(\lambdabif)$ were considered instead of $\lambdabif(\bifprm)$, then no failure would happen in Algorithm~\ref{alg.Kmu} and $\torbiftuplebif$ would be able to be computed.
This simple idea does not easily translate into a new procedure to obtain $\torbiftuplebif$. Indeed, in such a case, we would search for a correction $\hbifprm$ of $\bifprm$ (also corrections for $\kko$, $\prmo$, $\nfbifo$, and $\lamb _{\nfbifo}$) so that the error functions are small and $\lambdabifbif = 0$ at $\bifprmo + \hbifprm$. Because $\lambdabif$ and $\nfvbif$ depend on each other, it forces to also known $\nfvbif$ at the bifurcation value, i.e. $\nfvbifbif$, which ends up with an under-determined system (more unknowns than equations to be satisfied).
\begin{figure}[ht]
    \centering
    \caption{Saddle-node bifurcation where $\Re(\lambdabifbif) = 0$.}
    \label{fig.snbif}
    \includegraphics[scale=.8]{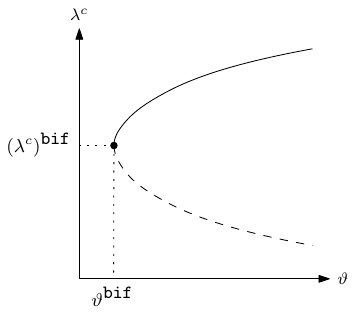}
\end{figure}

Instead of that idea, to be able to find $\torbiftuplebif$, we are going to adapt the general Algorithm~\ref{alg.Kmu} such that the continuation w.r.t. $\bifprm$ will not suffer from a saddle-node bifurcation. After that, we are going to find the parameter $\bifprm$ at the bifurcation,  $\bifprmbif$, by a simple root-finding procedure of the continuation parameter $\bifprm$ on this new algorithm. Hence, we will be able to continue $\torbiftuple$ through and also to get the information at the bifurcation point $\torbiftuplebif$.

It is important to stress that the new Algorithm~\ref{alg.Kmubif} assumes an a priori guessing of which diagonal entry in $\lamb$ will have real part crossing the zero value when $\bifprm$ is continued. Therefore,

\paragraph{Steps overview:}
Based on the illustrative saddle-node bifurcation in Figure~\ref{fig.snbif} and discussed the issues of being $\lambdabif$ the $y$-axis. We introduce a new independent parameter, say $\helpprm$, that will allow $\Re(\lambdabifbif) = 0$ to happen without an algorithm failure. The steps are:
\begin{enumerate}
    \renewcommand{\theenumi}{\arabic{enumi}}
    \renewcommand{\labelenumi}{\theenumi)}
    \item Given an initial guess tuple $(\kko, \prmo, \nfo, \lamb _{\nfo})$, use Algorithm~\ref{alg.Kmu} to compute $ \tortuple = (\kk, \prm, \nf, \lamb _{\nf})$.
    \item Perform standard continuation w.r.t. a parameter $\bifprm$ of the system
    \item Detect $\bifprmo$ when a diagonal entry in $\lamb _{\nf}$, say $\lambdabif$, has real part approaching zero.
    \item If so, compute a value $\helpprm$  from $\tortuple$ at a $\bifprm=\bifprmo$ close to that detection.
    \begin{enumerate}
    \renewcommand{\theenumii}{\theenumi.\arabic{enumii}}
    \renewcommand{\labelenumii}{\theenumii)}
        \item Use Algorithm~\ref{alg.Kmubif} to obtain $\torbiftuple = (\kk, \prm, \bifprm, \nfvbif, \nfbif, \lambdabif, \lamb _{\nfbif})$.
        \item Perform continuation w.r.t. $\helpprm$.
        \item Apply a root-finding method to get $\bifprm(\helpprm)$ at the bifurcation value, i.e. $\bifprmbif = \bifprm(\helpprm_\star)$.
    \end{enumerate}
\end{enumerate}

The parameter $\helpprm$ is a new artificial parameter coming from a normalization condition. This type of parameter is sometimes called unfolding parameter or pseudo-arclength parameter.

\begin{rmk}
 We can directly consider $\bifprm$ to be a pseudo-arclength parameter and we would not need to introduce $\helpprm$. A possible drawback is that we would need to compute the derivative w.r.t. $\bifprm$ from the original system $\vf$.
 % \jg{I think a condition like \begin{equation}
  % \| \kk - \kk _{\texttt{old}} \| _{L ^2} ^2 + \| \prm - \prm _{\texttt{old}} \| _2^2 + | \bifprm - \bifprm _{\texttt{old}} |^2 = \hbifprm
 % \end{equation}
 % i.e. a ball of radius $\hbifprm$, is able to deal with turning points without needing to use the distinguished $\lambdabif$ and $\nfvbif$. And we could apply the root-finding for $\bifprm$ after checking which is about to cross}
\end{rmk}

\subsection{Algorithm derivation}
We reuse the notation and correction philosophy of Section~\ref{sec:computation}, and only modify the parts affected by the distinguished near-neutral normal direction.
Let $\tortupleo = (\kko, \prmo, \nfo, \lamb _{\nfo})$ be an initial tuple for a parameter of the system $\bifprmo$. That is, $\kko\colon \T ^d \to \R ^n$ initial torus parametrization, $\prmo \in \R ^d$ dissipative parameters, $\nfo \colon \R ^d \to \R ^{n \times (n-d)}$ normal bundle containing stable and unstable bundles, and $\lamb _{\nfo} \in \R ^{(n-d)\times (n-d)}$ diagonal matrix.

Let us assume that there is a distinguished entry in $\lamb _{\nfo}$, say $\lambdabifo$, and let $\nfvbifo$ be the corresponding column in $\nfo$. Thus, there are $\nfbifo \in \R ^{n\times (n-d-1)}$ and $\lamb _{\nfbifo} \in \R ^{(n-d-1)\times (n-d-1)}$ such that
\begin{equation*}
    \nfo = 
    \begin{pmatrix}
        \nfvbifo & \nfbifo
    \end{pmatrix} \qquad \text{and} \qquad 
    \lamb _{\nfo} = 
    \begin{pmatrix}
        \lambdabifo \\
        & \lamb _{\nfbifo}
    \end{pmatrix}.
\end{equation*}
This $\lambdabifo$ can initially be either stable or unstable.

To construct a continuation procedure of $\tortuple$ w.r.t. the parameter $\bifprm \in \R$ of the system allowing saddle-node bifurcations, we introduce an unfolding parameter $\helpprm$ so that the continuation is performed with this other (equivalent) parameter that does not suffer from that bifurcation. Informally, the parameter $\helpprm$ will play the role of the $y$-axis in Figure~\ref{fig.snbif}, that under a continuation process is able to cross and compute the solution at the bifurcation point, say $\bifu\helpprm$, or equivalently $\bifu\bifprm  = \bifprm(\bifu\helpprm) $.
Thus, the tuple $\tortupleo$ produces a new tuple $\torbiftupleo = (\helpprmo, \kko, \prmo, \bifprmo, \nfvbifo, \nfbifo, \lambdabifo, \lamb _{\nfbifo})$, where $\helpprmo$ is chosen by the unfolding condition using data from $\tortupleo$. This specific condition is motivated by the geometry of the saddle-node bifurcation in the parameterization space. At the bifurcation point, the linearized operator becomes singular precisely along the direction of the distinguished neutral fiber $\nfvbif$. By imposing a fixed average projection of the torus embedding onto this fiber, we effectively ``unfold'' the singularity. This condition serves to anchor the parameterization in a way that remains transversal to the bifurcation manifold, allowing the Newton scheme to converge even when the physical bifurcation parameter $\bifprm$ reaches a turning point. In practice, this choice ensures that the continuation proceeds along the "arc" of the solution branch, providing a well-defined coordinate even when the Jacobian with respect to $\bifprm$ is singular. We choose this condition to be
\begin{equation} \label{eq.helpprmo}
    \helpprmo \bydef \avg{\kko ^\top \nfvbifo},
\end{equation}
where $\avg{\cdot}$ is the average of a function defined on $\T ^d$. 
Hence, based on the initial tuple $\torbiftupleo$, we have associated initial error functions given by:
\begin{align}
    % \begin{split}
 \li _\omg[\kko](\th) + \vf (\kko(\th); \prmo, \bifprmo) &= \eetor(\th), \label{eq.eeotorsn}\\
 \li _\omg[\nfvbifo](\th) + \dop _\x \vf (\kko(\th); \prmo, \bifprmo) \nfvbifo(\th) - \nfvbifo(\th) \lambdabifo &= \ee _{\nfvbif}(\th), \label{eq.eeonfvsn}\\
 \li _\omg[\nfbifo](\th) + \dop _\x \vf (\kko(\th); \prmo, \bifprmo) \nfbifo(\th) - \nfbifo(\th) \lamb _{\nfbifo} &= \ee _{\nfbif}(\th), \label{eq.eeonfsn}\\
 \avg{\kko^\top \nfvbifo} - \helpprmo &= \ee _{\helpprm}.\label{eq.eeohelpsn}
    % \end{split}
\end{align}
Notice that considering the frame $\fo (\th) = 
\begin{pmatrix}
    \dop \kko(\th) & \nfvbifo(\th) & \nfbifo(\th)
\end{pmatrix}$, the error functions in \eqref{eq.eeonfvsn}--\eqref{eq.eeonfsn} are included in
\begin{equation}
\label{eq.eeo.f.bif}
    \li _\omg[\fo](\th) + \dop _\x \vf (\kko(\th); \prmo, \bifprmo) \fo(\th) - \fo(\th) \lambo = \ee _{\f}(\th),
\end{equation}
where $\lambo$ is as in \eqref{eq.lambobif}.

We will use the frame $\fo$ to obtain the error functions in that new system. To fix notation, let
\begin{equation} \label{eq.eeo.bif.coord}
    \begin{split}         
    \R ^{n} \ni \eetor(\th) &\bydef \fo(\th) \eta _{\kk}(\th) = \tfo(\th) \eta _{\kk} ^{\tf}(\th) + \nfvbifo(\th) \eta _{\kk}^{\nfvbif}(\th) + \nfbifo(\th) \eta _{\kk}^{\nfbif}(\th), \\
    \R ^{n} \ni \ee _{\nfvbif}(\th) &\bydef \fo(\th) \eta _{\nfvbif}(\th) = \tfo(\th) \eta _{\nfvbif} ^{\tf}(\th) + \nfvbifo(\th) \eta _{\nfvbif}^{\nfvbif}(\th) + \nfbifo(\th) \eta _{\nfvbif}^{\nfbif}(\th), \\
    \R ^{n\times (n-d-1)} \ni \ee _{\nfbif}(\th) &\bydef \fo(\th) \eta _{\nfbif}(\th) = \tfo(\th) \eta _{\nfbif} ^{\tf}(\th) + \nfvbifo(\th) \eta _{\nfbif}^{\nfvbif}(\th) + \nfbifo(\th) \eta _{\nfbif}^{\nfbif}(\th).
    \end{split}
\end{equation}

\medskip

The construction of the algorithm consists in finding corrections of all the $\torbiftupleo$ elements when $\helpprmo$ is updated to $\helpprm = \helpprmo + \hhelpprm$. That is,
\begin{equation*}
    \htorbiftuple = (\hhelpprm, \hkk, \hprm, \hbifprm, \hnfvbif, \hnfbif, \hlambdabif, \hlamb _{\nfbif})
\end{equation*}
subject to a given continuation step $\hhelpprm$. The tuple $\torbiftuple = \torbiftupleo + \htorbiftuple$ has error functions $\ee$ (similar to those $\eeo$ in \eqref{eq.eeotorsn}--\eqref{eq.eeohelpsn}) smaller than a given tolerance $\tol$ from where we can derive the algorithm.

We assume that $\hkk$, $\hnfvbif$, and $\hnfbif$ are of the form:
\begin{equation}
\label{eq.corr.coord.bif}
\begin{split}
    \hkk(\th) &\bydef \fo(\th) \xi _{\kk}(\th)  = \tfo(\th) \xi _{\kk} ^{\tf}(\th) + \nfvbifo(\th) \xi _{\kk}^{\nfvbif}(\th) + \nfbifo(\th) \xi _{\kk}^{\nfbif}(\th) ,\\
    \hnfvbif(\th) &\bydef \fo(\th) \xi _{\nfvbif}(\th) = \tfo(\th) \xi _{\nfvbif} ^{\tf}(\th) + \nfvbifo(\th) \xi _{\nfvbif}^{\nfvbif}(\th) + \nfbifo(\th) \xi _{\nfvbif}^{\nfbif}(\th) , \\
    \hnfbif(\th) &\bydef \fo(\th) \xi _{\nfbif}(\th) = \tfo(\th) \xi _{\nfbif} ^{\tf}(\th) + \nfvbifo(\th) \xi _{\nfbif}^{\nfvbif}(\th) + \nfbifo(\th) \xi _{\nfbif}^{\nfbif}(\th) ,
\end{split}
\end{equation}
where $\tfo \bydef \dop \kko$.

\subsubsection{Torus correction}
The corrected form applied to \eqref{eq.eeotorsn} using \eqref{eq.corr.coord.bif} yields the equation in $\R ^n$ for all elements in $\T ^d$, that is,
\begin{multline}
\label{eq.corr.tor.bif}
\li _\omg[\fo] \xi _{\kk}  + \fo \li _\omg [\xi _{\kk}] + \dop _\x \vf (\kko; \prmo, \bifprmo) \fo \xi _{\kk} +
\dop _\prm \vf (\kko; \prmo, \bifprmo) \hprm \\ + \dop _\bifprm \vf (\kko; \prmo, \bifprmo) \hbifprm + \eetor = 0.    
\end{multline}
where we have neglected the Taylor error term of order $2$, more precisely,
\begin{multline*}
    T _{\kk}(\th) \bydef \vf (\kko + \hkk; \prmo + \hprm, \bifprmo + \hbifprm) - \vf (\kko; \prmo, \bifprmo) \\ - \dop _\x \vf (\kko; \prmo, \bifprmo) \hkk - \dop _\prm \vf (\kko; \prmo, \bifprmo) \hprm - \dop _\bifprm \vf (\kko; \prmo, \bifprmo) \hbifprm.
\end{multline*}
Because $\fo(\th)$ is invertible for all $\th \in \T ^d$, let us write
\begin{equation}
\label{eq.b.bif}
    \begin{split}
    \R ^{n\times d} \ni \dop _\prm \vf (\kko(\th); \prmo, \bifprmo) &\bydef \fo(\th) b _{\prm}(\th) = \tfo(\th) b _{\prm} ^{\tf}(\th) + \nfvbifo(\th) b _{\prm}^{\nfvbif}(\th) + \nfbifo(\th) b _{\prm}^{\nfbif}(\th) , \\
    \R ^{n} \ni \dop _\bifprm \vf (\kko(\th); \prmo, \bifprmo)  &\bydef \fo(\th) b _{\bifprm}(\th) = \tfo(\th) b _{\bifprm} ^{\tf}(\th) + \nfvbifo(\th) b _{\prm}^{\nfvbif}(\th) + \nfbifo(\th) b _{\bifprm}^{\nfbif}(\th).
    \end{split}
\end{equation}
Using \eqref{eq.eeo.f.bif} and \eqref{eq.b.bif} in \eqref{eq.corr.tor.bif}, we derive $\li _\omg [\xi _{\kk}] + \lambo \xi _{\kk} + b _\prm \hprm + b _\bifprm \hbifprm + \eta _{\kk} = 0$ after neglecting the second order error term $\ee _{\f} \xi _\kk$. In coordinates the previous equation reads as
\begin{equation}
\label{eq.corr.tor.bif.2}
\begin{split}
    \li _\omg [\xi _{\kk} ^{\tf}] + b _\prm ^{\tf} \hprm + b _\bifprm ^{\tf} \hbifprm + \eta _{\kk} ^{\tf} &= 0, \\
    \li _\omg [\xi _{\kk}^{\nfvbif}] + \lambdabifo \xi _{\kk} ^{\nfvbif} + b _\prm ^{\nfvbif} \hprm + b _\bifprm ^{\nfvbif} \hbifprm + \eta _{\kk} ^{\nfvbif} &= 0, \\
    \li _\omg [\xi _{\kk}^{\nfbif}] + \lamb _{\nfbifo} \xi _{\kk}^{\nfbif} + b _\prm ^{\nfbif} \hprm + b _\bifprm ^{\nfbif} \hbifprm + \eta _{\kk} ^{\nfbif} &= 0
\end{split}
\end{equation}
where the unknowns are $\xi _{\kk} = (\xi _{\kk} ^{\tf}, \xi _{\kk} ^{\nfvbif}, \xi _{\kk} ^{\nfbif})$, $\hprm$, and $\hbifprm$.

\smallskip

\newcommand{\symb}{s}
For \eqref{eq.corr.tor.bif.2} to be solvable in the unknowns, it is required to have a zero average, that is,
\begin{align}
% \begin{equation}
% \label{eq.corr.tor.bif.2}
% \begin{split}
    \avg{b _\prm ^{\tf}} \hprm + \avg{b _\bifprm ^{\tf}} \hbifprm + \avg{\eta _{\kk} ^{\tf}} &= 0,  \label{eq.avg.hprm} \\
    \lambdabifo \avg{\xi _{\kk} ^{\nfvbif}} + \avg{b _\prm ^{\nfvbif}} \hprm + \avg{b _\bifprm ^{\nfvbif}} \hbifprm + \avg{\eta _{\kk} ^{\nfvbif}} &= 0,  \label{eq.avg.xi.nfv.hbifprm} \\
    \lamb _{\nfbifo} \avg{\xi _{\kk}^{\nfbif}} + \avg{b _\prm ^{\nfbif}} \hprm + \avg{b _\bifprm ^{\nfbif}} \hbifprm + \avg{\eta _{\kk} ^{\nfbif}} &= 0, \label{eq.avg.xi.kk.nfbif}
% \end{split}
% \end{equation}
\end{align}
which is underdetermined, since it has more unknowns than equations.

Let us solve \eqref{eq.avg.hprm}--\eqref{eq.avg.xi.nfv.hbifprm} parametrizing the solution (provided an invertible $d+1$ matrix condition) by a symbol $\symb$, i.e. 
\begin{equation*}
    \begin{pmatrix}
        \avg{b _\prm ^{\tf}} & \avg{b _\bifprm ^{\tf}} \\
        \avg{b _\prm ^{\nfvbif}}  & \avg{b _\bifprm ^{\nfvbif}}
    \end{pmatrix}
    \begin{pmatrix}
        (\hprm )_0 & (\hprm) _1\\
        (\hbifprm) _0 & (\hbifprm )_1
    \end{pmatrix} = -
    \begin{pmatrix}
        \avg{\eta _{\kk} ^{\tf}}  & \boldsymbol 0 \\
        \avg{\eta _{\kk} ^{\nfvbif}} & \lambdabifo
    \end{pmatrix}, \qquad (\text{here } \boldsymbol 0 \in \R ^d)
\end{equation*}
with $\hprm(\symb) \bydef (\hprm) _0 + (\hprm) _1 \symb$ and $\hbifprm(\symb) \bydef (\hbifprm) _0 + (\hbifprm) _1 \symb$.
The symbol $\symb$ will later be subject to a condition that will fix its value, and it will correspond to $\avg{\xi _{\kk}^{\nfvbif}}$.
Notice that at the bifurcation point $\Re\lambdabifo=0$ and because it is assumed to be real, $(\hprm )_1 = 0 \in \R ^{d}$ and $(\hbifprm) _1 = 0 \in \R$.

\smallskip

Thus, from \eqref{eq.avg.xi.kk.nfbif} we derive the solution of the average of $\xi _{\kk} ^{\nfbif}(\symb) = (\xi _{\kk} ^{\nfbif}) _0 + (\xi _{\kk} ^{\nfbif}) _1 \symb$, explicitly, by solving the (diagonal) linear systems
\begin{equation} \label{eq.avg.xi.symb.kk.nfbif}
    \lamb _{\nfbifo}\avg{(\xi _{\kk} ^{\nfbif}) _j} = - \avg{\eta _{\kk} ^{\nfbif}} - \avg{ b _{\prm}^{\nfbif}} (\hprm) _j - \avg{b _{\bifprm} ^{\nfbif}} (\hbifprm) _j  , \qquad j = 0,1.
\end{equation}

Having parametrized the unknowns $\hprm$ and $\hbifprm$ in terms of a symbol $\symb$, the unknown $\xi _{\kk}(\symb)$ can also be determined from \eqref{eq.corr.tor.bif.2} in terms of Fourier coefficients.

\subsubsection{Distinguished direction correction}
Using \eqref{eq.corr.coord.bif} in \eqref{eq.eeonfvsn} yields
\begin{equation} \label{eq.corr.nfv}
    \li _\omg [\fo] \xi _{\nfvbif} + \fo \li _\omg [\xi _{\nfvbif}] + \dop _\x \vf (\kko; \prmo, \bifprmo) \fo \xi _{\nfvbif} -\nfvbifo \hlambdabif - \fo \xi _{\nfvbif} \lambdabifo + \ee _{\nfvbif} + \gamma(\symb) = 0,
\end{equation}
where 
\begin{multline*}
    [\gamma(\th)](\symb) = \bigl[\dop _{\x\x}^2 \vf (\kko(\th); \prmo, \bifprmo) \fo(\th) [\xi _{\kk}(\th)](\symb) + \dop _{\prm\x}^2 \vf (\kko(\th); \prmo, \bifprmo) \hprm(\symb) \\ + \dop _{\bifprm\x}^2 \vf (\kko(\th); \prmo, \bifprmo) \hbifprm (\symb)\bigr] \nfvbifo(\th)
\end{multline*}
and we have neglected the error terms of order $2$ such as $T _{\nfvbif}(\th) (\nfvbif(\th) + \hnfvbif(\th))$ with
\begin{multline*}
    T _{\nfvbif}(\th) \bydef \dop _\x \vf \bigl(\kko(\th) + \hkk(\th); \prmo + \hprm, \bifprmo + \hbifprm \bigr) - \dop _\x \vf (\kko(\th); \prmo, \bifprmo) \\ - \dop _{\x\x}^2 \vf (\kko(\th); \prmo, \bifprmo) \hkk(\th) - \dop _{\prm\x}^2 \vf (\kko(\th); \prmo, \bifprmo) \hprm - \dop _{\bifprm\x}^2 \vf (\kko(\th); \prmo, \bifprmo) \hbifprm.
\end{multline*}
Using \eqref{eq.eeo.f.bif} in \eqref{eq.corr.nfv} and neglecting the second order error term $\ee _{\f}  \xi _{\nfvbif}$, we derive
\begin{equation}\label{eq.corr.nfv.2}
    \fo \li _\omg [\xi _{\nfvbif}] + \fo \lambo \xi _{\nfvbif} -\nfvbifo \hlambdabif - \fo \xi _{\nfvbif} \lambdabifo + \ee _{\nfvbif} + \gamma(\symb) = 0,
\end{equation}
where $[\xi _{\nfvbif}(\th)](\symb) \bydef (\xi _{\nfvbif}(\th)) _0 + (\xi _{\nfvbif}(\th)) _1 \symb$ and $\hlambdabif(\symb) \bydef (\hlambdabif) _0 + (\hlambdabif) _1 \symb$ are the unknowns parametrized by the symbol $\symb$ up to degree $1$. 

Let us now define
\begin{equation*}
    [\gamma(\th)](\symb)\bydef \fo(\th) [c _{\nfvbif}(\th)](\symb) = \fo(\th)\bigl[(c _{\nfvbif}(\th)) _0 + (c _{\nfvbif}(\th)) _1 \symb\bigr]
\end{equation*}
with
\begin{equation*}
    \fo(\th)(c _{\nfvbif}(\th)) _j  = \tfo(\th) (c _{\nfvbif} ^{\tf}(\th)) _j + \nfvbifo(\th) (c _{\nfvbif}^{\nfvbif}(\th)) _j + \nfbifo(\th) (c _{\nfvbif}^{\nfbif}(\th)) _j, \qquad j = 0,1.
\end{equation*}
Thus \eqref{eq.corr.nfv.2} in coordinates reads as 
\begin{align*}
% \begin{equation}
    % \begin{split}
        \li _\omg [\xi _{\nfvbif}^{\tf}(\symb)] - \xi _{\nfvbif}^{\tf}(\symb)\lambdabifo + c _{\nfvbif} ^{\tf}(\symb) + \eta _{\nfvbif}^{\tf}&= 0, \\
        \li _\omg [\xi _{\nfvbif}^{\nfvbif}(\symb)]  - \hlambdabif(\symb) + c _{\nfvbif} ^{\nfvbif}(\symb) + \eta _{\nfvbif}^{\nfvbif}&= 0, \\
        \li _\omg [\xi _{\nfvbif}^{\nfbif}(\symb)]  + (\lamb _{\nfbifo} - \lambdabifo \Id_{n-d-1}) \xi _{\nfvbif} ^{\nfbif}(\symb) + c _{\nfvbif} ^{\nfbif}(\symb)  + \eta _{\nfvbif}^{\nfbif}&= 0.
    % \end{split}
% \end{equation}
\end{align*}
These equations require zero average, which means six equations
\begin{align*}
% \begin{equation}
    % \begin{split}
        - \avg{(\xi _{\nfvbif}^{\tf}) _0}\lambdabifo + \avg{(c _{\nfvbif} ^{\tf}) _0} + \avg{\eta _{\nfvbif}^{\tf}}&= 0, 
        & - \avg{(\xi _{\nfvbif}^{\tf}) _1}\lambdabifo + \avg{(c _{\nfvbif} ^{\tf}) _1}&= 0, \\
        - (\hlambdabif) _0 + \avg{(c _{\nfvbif} ^{\nfvbif}) _0} + \avg{\eta _{\nfvbif}^{\nfvbif}}&= 0, 
        & - (\hlambdabif) _1 + \avg{(c _{\nfvbif} ^{\nfvbif}) _1}&= 0,\\
        (\lamb _{\nfbifo} - \lambdabifo) \avg{(\xi _{\nfvbif} ^{\nfbif}) _0}  + \avg{(c _{\nfvbif} ^{\nfbif}) _0} + \avg{\eta _{\nfvbif}^{\nfbif}}&= 0, 
        & 
        (\lamb _{\nfbifo} - \lambdabifo) \avg{(\xi _{\nfvbif} ^{\nfbif}) _1} + \avg{(c _{\nfvbif} ^{\nfbif}) _1}&= 0.
    % \end{split}
% \end{equation}
\end{align*}
We deduce the normalization conditions $\avg{(\xi _{\nfvbif} ^{\nfvbif}) _0} = \avg{(\xi _{\nfvbif} ^{\nfvbif}) _1} = 0 \in \R$.

\subsubsection{Unfolding condition correction}
Using \eqref{eq.corr.coord.bif} in \eqref{eq.eeohelpsn} yields to
\begin{equation} \label{eq.unfold.sol}
\begin{split}
 0 &= \avg{(\fo \xi _{\kk}(\symb))^\top \nfvbifo} + \avg{\kko^\top \fo\xi _{\nfvbif}(\symb)} + \ee _{\helpprm} - \hhelpprm\\ & =
 \avg{(\fo (\xi _{\kk}) _0)^\top \nfvbifo} + \avg{\kko^\top (\fo(\xi _{\nfvbif}) _0)} + \symb\bigl[\avg{(\fo (\xi _{\kk}) _1)^\top \nfvbifo} + \avg{\kko^\top (\fo(\xi _{\nfvbif}) _1)} \bigr] + \ee _{\helpprm} - \hhelpprm.
\end{split}
\end{equation}
where we have neglected the second order term $\avg{(\hkk(\symb)) ^\top \hnfvbif(\symb)}$. By solving \eqref{eq.unfold.sol}, we find $\symb _\star$ that, by construction, $\symb _\star \bydef \avg{\xi _\kk ^{\nfvbif}}$. This solution is possible to find as long as 
\begin{equation*}
 \avg{(\fo (\xi _{\kk}) _1)^\top \nfvbifo} + \avg{\kko^\top (\fo(\xi _{\nfvbif}) _1)} \ne 0.
\end{equation*}

\begin{rmk}[Quadratic correction]\label{rmk.quadratic-correction}
  Notice that by not disregarding the quadratic terms we obtain a quadratic equation on $\symb$ of the form 
  $\alpha + \beta \symb + \gamma \symb ^2= 0$ with 
\begin{equation*}
 \begin{split}
  \alpha &= \avg{(\fo (\xi _{\kk}) _0)^\top \nfvbifo} + \avg{\kko^\top (\fo(\xi _{\nfvbif}) _0)}  + \ee _{\helpprm} - \hhelpprm\\
  \beta &= \avg{(\fo (\xi _{\kk}) _1)^\top \nfvbifo} + \avg{\kko^\top (\fo(\xi _{\nfvbif}) _1)} +  \avg{(\fo (\xi _{\kk}) _0) ^\top (\fo(\xi _{\nfvbif}) _1)} +  \avg{(\fo (\xi _{\kk}) _1) ^\top (\fo(\xi _{\nfvbif}) _0)} \\
  \gamma &= \avg{(\fo (\xi _{\kk}) _1) ^\top (\fo(\xi _{\nfvbif}) _1)}
 \end{split}
\end{equation*}
  Since, in principle, all $\xi$s are small, the added terms in this new equation are of a smaller order of magnitude, securing the solvability of the equation for $\symb$ as long as \eqref{eq.unfold.sol} is solvable.
\end{rmk}

\subsubsection{Reduced normal bundle correction}
Using \eqref{eq.corr.coord.bif} in \eqref{eq.eeonfsn} yields to
\begin{equation} \label{eq.corr.nf}
    \li _\omg [\fo] \xi _{\nfbif} + \fo \li _\omg [\xi _{\nfbif}] + \dop _\x \vf (\kko; \prmo, \bifprmo) \fo \xi _{\nfbif} - \nfbifo \hlamb _{\nfbifo} - \fo \xi _{\nfbif} \lamb _{\nfbifo} + \ee _{\nfbif} + \sigma(\symb _\star) = 0,
\end{equation}
where 
\begin{multline*}
    [\sigma(\th)](\symb _\star) = \bigl[\dop _{\x\x}^2 \vf (\kko(\th); \prmo, \bifprmo) \fo(\th) [\xi _{\kk}(\th)](\symb _\star) + \dop _{\prm\x}^2 \vf (\kko(\th); \prmo, \bifprmo) \hprm(\symb _\star) \\ + \dop _{\bifprm\x}^2 \vf (\kko(\th); \prmo, \bifprmo) \hbifprm (\symb _\star)\bigr] \nfbifo(\th)
\end{multline*}
and we have neglected the error terms of order $2$ such as $T _{\nfvbif}(\th) (\nfbif(\th) + \hnfbif(\th))$.

Using \eqref{eq.eeo.f.bif} in \eqref{eq.corr.nfv} and neglecting the second order error term $\ee _{\f}  \xi _{\nfbif}$, we derive
\begin{equation}\label{eq.corr.nfbif.2}
    \fo \li _\omg [\xi _{\nfbif}] + \fo \lambo \xi _{\nfbif} -\nfbifo \hlamb _{\nfbifo} - \fo \xi _{\nfbif} \lamb _{\nfbifo} + \ee _{\nfbif} + \sigma(\symb _\star) = 0,
\end{equation}
where the unknowns $\xi _{\nfbif}$ and $\hlamb _{\nfbifo}(\symb)$ do not need to be parametrized by the symbol $\symb$. 

Let us  now define
\begin{equation*}
    [\sigma(\th)](\symb _\star)\bydef \fo(\th) c _{\nfbif}(\th) 
    = \tfo(\th) c _{\nfbif} ^{\tf}(\th) + \nfvbifo(\th) c _{\nfbif}^{\nfvbif}(\th) + \nfbifo(\th) c _{\nfbif}^{\nfbif}(\th).
\end{equation*}
Thus \eqref{eq.corr.nfbif.2} in coordinates reads as 
\begin{align*}
% \begin{equation}
    % \begin{split}
        \li _\omg [\xi _{\nfbif}^{\tf}] - \xi _{\nfbif}^{\tf}\lamb _{\nfbifo} + c _{\nfbif} ^{\tf} + \eta _{\nfbif}^{\tf}&= 0, \\
        \li _\omg [\xi _{\nfbif}^{\nfvbif}] +  \xi _{\nfbif} ^{\nfvbif}(\lambdabifo \Id _{n-d-1} - \lamb _{\nfbifo})+ c _{\nfbif} ^{\nfvbif} + \eta _{\nfbif}^{\nfvbif}&= 0, \\
        \li _\omg [\xi _{\nfbif}^{\nfbif}]  + \lamb _{\nfbifo}\xi _{\nfbif} ^{\nfbif} - \xi _{\nfbif} ^{\nfbif}\lamb _{\nfbifo}  - \hlamb _{\nfbifo}+ c _{\nfbif} ^{\nfbif}  + \eta _{\nfbif}^{\nfbif}&= 0.
    % \end{split}
% \end{equation}
\end{align*}
These equations require zero average, which means three equations
\begin{align*}
% \begin{equation}
    % \begin{split}
        - \avg{\xi _{\nfbif}^{\tf}}\lamb _{\nfbifo} + \avg{c _{\nfbif} ^{\tf}} + \avg{\eta _{\nfbif}^{\tf}}&= 0, \\
        \avg{\xi _{\nfbif} ^{\nfvbif}}(\lambdabifo \Id _{n-d-1} - \lamb _{\nfbifo})  + \avg{c _{\nfbif} ^{\nfvbif}} + \avg{\eta _{\nfbif}^{\nfvbif}}&= 0,\\        
        - \hlamb _{\nfbifo}  + \avg{c _{\nfbif} ^{\nfbif}} + \avg{\eta _{\nfbif}^{\nfbif}}&= 0,
    % \end{split}
% \end{equation}
\end{align*}
where we use that $\lamb _{\nfbifo}$ is a diagonal matrix to solve it. In particular, we deduce the normalization conditions $\avg{\xi _{\nfbif} ^{\nfbif}} =\boldsymbol 0 \in \R ^{(n-d-1)\times (n-d-1)}$.

Algorithm~\ref{alg.Kmubif} mirrors Algorithm~\ref{alg.Kmu} but augments the unknowns with bifurcation and unfolding variables. Each iteration computes torus and bundle corrections in frame coordinates, fixes the unfolding scalar from the normalization condition, and updates the distinguished and reduced normal directions consistently.
\begin{alg}[Steps to correct $(\kk, \prm, \bifprm, \nfvbif, \nfbif, \lambdabif, \lamb _{\nfbif})$ -- procedure with distinguished normal direction] \label{alg.Kmubif}
    \small \
\begin{enumerate}
 {\setlength{\itemsep}{0pt}
 \item [$\star$] \texttt{Input:} ODE like $\dot \x = \vf (\x;\prm, \bifprm)$ and ergodic frequency $\omg\in \R ^{d}$. Initial guesses of embedding $\kko\colon \T ^{d}\to \R ^n$, system parameters $\prmo \in \R ^{d}$, $\bifprmo \in \R$, normal bundle $\nfo = 
 \begin{pmatrix}
  \nfvbifo & \nfbifo
 \end{pmatrix}\colon \T ^{d} \to \R ^{n\times (1 + (n-d-1))}$, matrix $\lamb _{\nfo} = \lambdabifo \oplus \lamb _{\nfbifo} \in \R ^{(n-d)\times (n-d)}$, and initial pseudo-arclength parameter $\helpprmo = \avg{\kko ^\top \nfvbifo}$.
 % , and its step $\hhelpprm$.  
 \item [$\star$] \texttt{Output:} $\kk$, $\prm $, $\bifprm$, $\nfvbif$, $\nfbif$, $\lambdabif$, $\lamb _{\nfbif}$ s.t. \eqref{eq.eeotorsn}--\eqref{eq.eeo.bif.coord} holds with error norms smaller than a given tolerance $\tol$
 \item [$\star$]
 \texttt{Assumption:} $\Re \lambdabifo$ close to zero
 \item [$\star$] \texttt{Notation:} $\li _\omg[f](\th)\bydef -\dop f(\th) \omg$ and  $\langle \eta \rangle \bydef \int _{\T ^{d}} \eta (\th) \, d\th$
 }
 % \item $\helpprmo \gets \helpprmo + \hhelpprm$
 
 \item\label{alg.Kmubifiterstep} $\fo(\th) \gets 
 \begin{pmatrix}
  \dop \kko(\th) & \nfvbifo(\th) & \nfbifo(\th)
 \end{pmatrix}$ \hfill {\footnotesize$\rhd \fo \colon \T ^{d} \to \R ^{n\times n}$}
 
 \item $\eetor(\th) \gets \li _\omg[\kko](\th) + \vf (\kko(\th); \prmo, \bifprmo)$ \hfill {\footnotesize$\rhd \eetor \colon \T ^{d} \to \R ^n$}
 
 \item $(\eta _{\kk}^{\tf}, \eta _{\kk}^{\nfvbif}, \eta _{\kk}^{\nfbif}) \gets \fo (\th)^{-1} \eetor(\th)$ 
 \hfill {\footnotesize$\rhd (\eta _{\kk} ^{\tf}, \eta _{\kk} ^{\nfvbif}, \eta _{\kk} ^{\nfbif}) \colon \T ^{d} \to \R ^{(d + 1 + (n-d-1))\times 1}$}
 
 \item $(b _{\prm} ^{\tf}, b _{\prm} ^{\nfvbif}, b _{\prm} ^{\nfbif}) \gets \fo (\th)^{-1}\dop _{\prm} \vf (\kko(\th); \prmo, \bifprmo)$ \hfill {\footnotesize$\rhd (b _{\prm} ^{\tf}, b _{\prm} ^{\nfvbif}, b _{\prm} ^{\nfbif}) \colon \T ^{d} \to \R ^{(d + 1 + (n-d-1))\times d}$}
 
 \item $(b _{\bifprm} ^{\tf}, b _{\bifprm} ^{\nfvbif}, b _{\bifprm} ^{\nfbif}) \gets \fo (\th)^{-1}\dop _{\bifprm} \vf (\kko(\th); \prmo, \bifprmo)$ \hfill {\footnotesize$\rhd (b _{\bifprm} ^{\tf}, b _{\bifprm} ^{\nfvbif}, b _{\bifprm} ^{\nfbif}) \colon \T ^{d} \to \R ^{(d + 1 + (n-d-1))\times 1}$}
 
 \item Average condition to find $\hprm (\symb) = (\hprm) _0 +(\hprm) _1 \symb$, $\hbifprm (\symb) = (\hbifprm) _0 +(\hbifprm) _1 \symb$ parametrized by $\symb$: solve
 \begin{equation*}
    \begin{pmatrix}
        \avg{b _\prm ^{\tf}} & \avg{b _\bifprm ^{\tf}} \\
        \avg{b _\prm ^{\nfvbif}}  & \avg{b _\bifprm ^{\nfvbif}}
    \end{pmatrix}
    \begin{pmatrix}
        (\hprm) _0 & (\hprm) _1\\
        (\hbifprm) _0 & (\hbifprm) _1
    \end{pmatrix} = -
    \begin{pmatrix}
        \avg{\eta _{\kk} ^{\tf}}  & \boldsymbol 0 \\
        \avg{\eta _{\kk} ^{\nfvbif}} & \lambdabifo
    \end{pmatrix}, \qquad (\text{here } \boldsymbol 0 \in \R ^d)
 \end{equation*}
 \item Fourier step of $\xi _{\kk} ^{\tf}(\symb) = (\xi _{\kk} ^{\tf}) _0 + (\xi _{\kk} ^{\tf}) _1 \symb$ to obtain a parametrized solution: for all $k \in \Z ^{d}$ and $j = 0,1$
\begin{equation*}
 \begin{split}
  \fou {(\xi _{\kk}^{\tf})} _{j,k} &=0, \qquad \text{for }|k| = 0 \text{ (normalization condition),}\\
  -\I (k \cdot \omg) \fou {(\xi _{\kk}^{\tf})} _{j,k}  + \fou {b _{\prm}^{\tf}} _k (\hprm) _j + (1-j) \fou {\eta _{\kk}^{\tf}} _k + \fou {b _{\bifprm}^{\tf}} _k (\hbifprm) _j &=0, \qquad \text{for }|k| \ne 0\text{, } \fou {(\xi _{\kk}^{\tf})} _{j,k} \text{ is solved}
 \end{split}
\end{equation*}
 \item Fourier step of $\xi _{\kk} ^{\nfvbif}(\symb) = (\xi _{\kk} ^{\nfvbif}) _0 + (\xi _{\kk} ^{\nfvbif}) _1 \symb$, $\xi _{\kk} ^{\nfbif}(\symb) = (\xi _{\kk} ^{\nfbif}) _0 + (\xi _{\kk} ^{\nfbif}) _1 \symb$: for all $k \in \Z ^{d}$ and $j = 0,1$
\begin{equation*} 
\begin{split} 
  \bigl(\lambdabifo - \I (k \cdot \omg) \bigr)\fou {(\xi _{\kk}^{\nfvbif})} _{j,k} + (1-j) \fou {\eta _{\kk}^{\nfvbif}} _k + \fou {b _{\prm}^{\nfvbif}} _k (\hprm) _j + \fou {b _{\bifprm}^{\nfvbif}} _k (\hbifprm) _j &=0\\
  \bigl(\lamb _{\nfbifo} - \I (k \cdot \omg) \bigr)\fou {(\xi _{\kk}^{\nfbif})} _{j,k} + (1-j) \fou {\eta _{\kk}^{\nfbif}} _k + \fou {b _{\prm}^{\nfbif}} _k (\hprm) _j + \fou {b _{\bifprm}^{\nfbif}} _k (\hbifprm) _j &=0
\end{split}
\end{equation*}
 
 \item $[\gamma (\symb)](\th) = (\gamma) _0(\th) + (\gamma) _1(\th) \symb$ where for $j=0,1$, \hfill {\footnotesize$\rhd (\gamma) _j \colon \T ^d \to \R ^{n} $}
 \begin{equation*}
  \begin{split}
   (\gamma) _j(\th) \gets \biggl[&
   \dop _{\x\x}^2 \vf (\kko(\th); \prmo, \bifprmo) \fo(\th)
   \begin{pmatrix}
    (\xi _{\kk} ^{\tf}) _j(\th) \\ 
    (\xi _{\kk} ^{\nfvbif}) _j(\th) \\
    (\xi _{\kk} ^{\nfbif}) _j(\th)
   \end{pmatrix} + \\ &
   \dop _{\prm\x}^2 \vf (\kko(\th); \prmo, \bifprmo) (\hprm) _j  + 
   \dop _{\bifprm\x}^2 \vf (\kko(\th); \prmo, \bifprmo) (\hbifprm) _j \biggr] \nfvbifo(\th)
  \end{split}
 \end{equation*}
 
 \item $ \ee _{\nfvbif}(\th) \gets 
 \li _\omg[\nfvbifo](\th) + \dop _\x \vf (\kko(\th); \prmo, \bifprmo) \nfvbifo(\th) - \nfvbifo(\th) \lambdabifo $\hfill {\footnotesize$\rhd \ee _{\nfvbif}\colon \T ^{d} \to \R ^{n}$}
 
%  \item $(\eta _{\nfvbif}^{\tf}, \eta _{\nfvbif}^{\nfvbif}, \eta _{\nfvbif}^{\nfbif}) \gets \fo (\th)^{-1} \ee _{\nfvbif}(\th)$  \hfill $(\eta _{\nfvbif} ^{\tf}, \eta _{\nfvbif} ^{\nfvbif}, \eta _{\nfvbif} ^{\nfbif}) \colon \T ^{d} \to \R ^{(d + 1 + (n-d-1))\times 1}$
 
 \item $((c _{\nfvbif} ^{\tf}) _0, (c _{\nfvbif} ^{\nfvbif}) _0, (c _{\nfvbif} ^{\nfbif}) _0) \gets \fo (\th)^{-1}[(\gamma) _0(\th) + \ee _{\nfvbif}(\th)]$ \hfill {\footnotesize $\rhd ((c _{\nfvbif} ^{\tf}) _0, (c _{\nfvbif} ^{\nfvbif}) _0, (c _{\nfvbif} ^{\nfbif}) _0) \colon \T ^{d} \to \R ^{(d + 1 + (n-d-1))\times 1}$}
 
 $((c _{\nfvbif} ^{\tf}) _1, (c _{\nfvbif} ^{\nfvbif}) _1, (c _{\nfvbif} ^{\nfbif}) _1) \gets \fo (\th)^{-1}(\gamma) _1(\th)$ \hfill {\footnotesize$\rhd ((c _{\nfvbif} ^{\tf}) _1, (c _{\nfvbif} ^{\nfvbif}) _1, (c _{\nfvbif} ^{\nfbif}) _1) \colon \T ^{d} \to \R ^{(d + 1 + (n-d-1))\times 1}$}

 %$\eta _{\nfvbif} ^{\tf} \colon \T ^{d} \to \R ^{d}$, $\eta _{\nfvbif}^{\nfvbif} \colon \T ^{d} \to \R$, and $\eta _{\nfvbif} ^{\nfbif} \colon \T ^{d} \to \R ^{n-d-1}$

 \item $\hlambdabif(\symb) \gets \avg{(c _{\nfvbif} ^{\nfvbif}) _0} + \avg{(c _{\nfvbif} ^{\nfvbif}) _1} \symb$
  
 \item Fourier step of $\xi _{\nfvbif} ^{\tf}(\symb) = (\xi _{\nfvbif} ^{\tf}) _0 + (\xi _{\nfvbif} ^{\tf}) _1 \symb$, $\xi _{\nfvbif} ^{\nfvbif}(\symb) = (\xi _{\nfvbif} ^{\nfvbif}) _0 + (\xi _{\nfvbif} ^{\nfvbif}) _1 \symb$, and $\xi _{\nfvbif} ^{\nfbif}(\symb) = (\xi _{\nfvbif} ^{\nfbif}) _0 + (\xi _{\nfvbif} ^{\nfbif}) _1 \symb$: for all $k \in \Z ^{d}$ and $j = 0,1$
 \begin{equation*}
  \begin{split}
  \bigl(- \lambdabifo - \I (k \cdot \omg)\bigr) \fou {(\xi _{\nfvbif} ^{\tf})} _{j,k} + \fou {(c _{\nfvbif} ^{\tf})} _{j,k} &=0 \\
  \fou {\xi _{\nfvbif} ^{\nfvbif}} _{j,k} &= 0 \qquad \text{ for } |k|=0 \text{ normalization} \\
  - \I (k \cdot \omg) \fou {(\xi _{\nfvbif} ^{\nfvbif})} _{j,k} + \fou {(c _{\nfvbif} ^{\nfvbif})} _{j,k} &=0\qquad  \text{ for } |k|\ne 0\\ 
  \bigl(\lambda _i - \lambdabifo - \I (k \cdot \omg)\bigr) \fou {(\xi _{\nfvbif} ^{\nfbif})} _{j,k} + \fou {(c _{\nfvbif} ^{\nfbif})} _{j,k} &=0 \qquad \text{assuming }\lamb _{\nfbifo} = \diag(\lambda _i) 
  \end{split}
 \end{equation*}
 
 \item $\ee _{\helpprm} \gets \avg{\kko ^\top \nfvbifo} - \helpprmo$ \hfill {\footnotesize $\rhd \ee _{\helpprm} \in \R$}
 \item Let $\symb _\star$ be a solution of  \hfill {\footnotesize$\rhd \symb _\star \in \R$}
 \begin{equation*}  
 \begin{split}
  &\avg{(\fo(\xi _{\kk})_1)^\top (\fo(\xi_{\nfvbif})_1)} \symb^2 + \\ &\avg{\kko^\top (\fo(\xi _{\nfvbif}) _1) +(\fo (\xi _{\kk}) _1)^\top \nfvbifo +  (\fo(\xi _{\kk})_0)^\top (\fo(\xi_{\nfvbif})_1) + (\fo(\xi _{\kk})_1)^\top (\fo(\xi_{\nfvbif})_0)} \symb + \\
  &\avg{\kko^\top (\fo(\xi _{\nfvbif}) _0)+(\fo (\xi _{\kk}) _0)^\top \nfvbifo +  (\fo(\xi _{\kk})_0)^\top (\fo(\xi_{\nfvbif})_0)} + \ee _{\helpprm} =0
  \end{split}
 \end{equation*}

 \item $ \ee _{\nfbif}(\th) \gets 
 \li _\omg[\nfbifo](\th) + 
 \dop _\x \vf (\kko(\th); \prmo, \bifprmo) \nfbifo(\th) - 
 \nfbifo(\th) \lamb _{\lambdabifo}$\hfill {\footnotesize$\rhd \ee _{\nfbif}\colon \T ^{d} \to \R ^{n\times (n-d-1)}$}
 
 \item $(\eta _{\nfbif}^{\tf}, \eta _{\nfbif}^{\nfvbif}, \eta _{\nfbif}^{\nfbif}) \gets \fo (\th)^{-1} \ee _{\nfbif}(\th)$ 
 \hfill {\footnotesize$\rhd (\eta _{\nfbif} ^{\tf},\eta _{\nfbif}^{\nfvbif},\eta _{\nfbif} ^{\nfbif}) \colon \T ^{d} \to \R^{(d + 1 + (n-d-1)) \times (n-d-1)}$}
 
 \item Compute $\sigma \colon \T ^d \to \R ^{n \times (n-d-1)}$ given by
 \begin{align*}
   \sigma(\th) \gets \biggl[
   &\dop _{\x\x}^2 \vf (\kko(\th); \prmo, \bifprmo) \fo(\th)
   \begin{pmatrix}
    \xi _{\kk} ^{\tf}(\symb _\star)(\th) \\ 
    \xi _{\kk} ^{\nfvbif}(\symb _\star)(\th) \\
    \xi _{\kk} ^{\nfbif}(\symb _\star)(\th)
   \end{pmatrix} +  \\
   &\dop _{\prm\x}^2 \vf (\kko(\th); \prmo, \bifprmo) \hprm (\symb _\star)  + 
   \dop _{\bifprm\x}^2 \vf (\kko(\th); \prmo, \bifprmo) \hbifprm(\symb _\star) \biggr] \nfbifo(\th)
  \end{align*}
  
 \item $(c _{\nfbif} ^{\tf}, c _{\nfbif} ^{\nfvbif}, c _{\nfbif} ^{\nfbif}) \gets \fo (\th)^{-1}\sigma(\th)$ \hfill {\footnotesize$\rhd (c _{\nfbif} ^{\tf}, c _{\nfbif} ^{\nfvbif}, c _{\nfbif} ^{\nfbif}) \colon \T ^{d} \to \R ^{(d + 1 + (n-d-1))\times (n-d-1)}$}
 
 \item Solve in Fourier for $\xi _{\nfbif} ^{\tf}$ and $\xi _{\nfbif} ^{\nfvbif}$. For all $k \in \Z ^{d}$, assume here $\lamb _{\nfbifo} = \diag(\lambda _l) _{l = 1}^{n-d-1}$
\begin{align*}
% \begin{equation}
    % \begin{split}
        \bigl(\lambda _l - \I (k \cdot \omg)\bigr) \fou {\xi _{\nfbif}^{\tf}} _k  + \fou { c _{\nfbif} ^{\tf}} _k + \fou {\eta _{\nfbif}^{\tf}} _k&= 0, \\
        \bigl(\lambdabifo - \lambda _l - \I (k \cdot \omg)\bigr) \fou {\xi _{\nfbif}^{\nfvbif}} + \fou{c _{\nfbif} ^{\nfvbif}} _k + \fou{\eta _{\nfbif}^{\nfvbif}} _k&= 0
    % \end{split}
% \end{equation}
\end{align*}
\item Consider block-matrix views of $\xi _{\nfbif} ^{\nfbif}(\th)$, $c _{\nfbif} ^{\nfbif}(\th)$, $\eta _{\nfbif} ^{\nfbif}(\th) \in \R ^{(n-d-1) \times (n-d-1)}$,
\begin{equation*}
 \xi _{\nfbif} ^{\nfbif}(\th) \gets 
 \begin{pmatrix}
  \xi ^{ss}(\th) & \xi ^{su} (\th)\\
  \xi ^{us}(\th) & \xi ^{uu} (\th)
 \end{pmatrix}, \qquad 
 c _{\nfbif} ^{\nfbif}(\th) \gets 
 \begin{pmatrix}
  c ^{ss} (\th) & c ^{su}(\th) \\
  c ^{us} (\th) & c ^{uu}(\th) 
 \end{pmatrix}, \qquad 
 \eta _{\nfbif} ^{\nfbif}(\th) \gets 
 \begin{pmatrix}
  \eta ^{ss} (\th) & \eta ^{su} (\th)\\
  \eta ^{us} (\th) & \eta ^{uu} (\th)
 \end{pmatrix}
\end{equation*}

\item Solve in Fourier $\xi _{\nfbif} ^{\nfbif}(\th) $. For all $k \in \Z ^d$, assume here  $\lamb _{\nfbifo} = \diag(\lambda _i ^s)_{i=1}^{n _s} \oplus \diag(\lambda _j ^u)_{j=1}^{n _u}$ \hfill {\footnotesize$\rhd n-d-1 = n _s + n _u$}
\begin{align*}
% \begin{equation}
    % \begin{split}
    \fou {(\xi ^{ss}_{i,i})}_0 = \fou {(\xi ^{uu}_{i,i})}_0 &= 0 \qquad \text{normalization condition,}\\
    (\lambda _i ^s - \lambda _j ^s) \fou {(\xi ^{ss}_{i,j})} _0 + \fou {(c ^{ss}_{i,j})} _0 + \fou {(\eta ^{ss}_{i,j})} _0 &= 0 \qquad |k| = 0 \text{, }i \ne j,\\
    (\lambda _i ^u - \lambda _j ^u) \fou {(\xi ^{uu}_{i,j})} _0 + \fou {(c ^{uu}_{i,j})} _0 + \fou {(\eta ^{uu}_{i,j})} _0&= 0 \qquad |k| = 0 \text{, }i \ne j,\\
    \bigl(\lambda _i ^s - \lambda _j ^s -\I(k \cdot \omg)\bigr) \fou {(\xi ^{ss}_{i,j})} _k + \fou {(c ^{ss}_{i,j})} _k + \fou {(\eta ^{ss}_{i,j})} _k &= 0 \qquad |k| \ne 0, \\
    \bigl(\lambda _i ^s - \lambda _j ^u -\I(k \cdot \omg)\bigr) \fou {(\xi ^{su}_{i,j})} _k + \fou {(c ^{su}_{i,j})} _k + \fou {(\eta ^{su}_{i,j})} _k&= 0, \\
    \bigl(\lambda _i ^u - \lambda _j ^s -\I(k \cdot \omg)\bigr) \fou {(\xi ^{us}_{i,i})} _k + \fou {(c ^{us}_{i,j})} _k + \fou {(\eta ^{us}_{i,j})} _k&= 0, \\
    \bigl(\lambda _i ^u - \lambda _j ^u -\I(k \cdot \omg)\bigr) \fou {(\xi ^{uu}_{i,j})} _k + \fou {(c ^{uu}_{i,j})} _k + \fou {(\eta ^{uu}_{i,j})} _k&= 0 \qquad |k| \ne 0
    % \end{split}
% \end{equation}
\end{align*}

 \item $\kko(\th) \gets \kko(\th) +  \fo(\th)
 \begin{pmatrix}
  \xi _{\kk}^{\tf}(\symb _\star)(\th) \\ \xi _{\kk}^{\nfvbif}(\symb _\star)(\th) \\ \xi _{\kk}^{\nfbif}(\symb _\star)(\th)
 \end{pmatrix}$,  $\prmo \gets \prmo + \hprm(\symb _\star)$, and $\bifprmo \gets \bifprmo + \hbifprm(\symb _\star)$
 
 \item $\nfvbifo(\th) \gets \nfvbifo(\th) +  \fo(\th)
 \begin{pmatrix}
  \xi _{\nfvbif}^{\tf}(\symb _\star)(\th) \\ \xi _{\nfvbif}^{\nfvbif}(\symb _\star)(\th) \\ \xi _{\nfvbif}^{\nfbif}(\symb _\star)(\th)
 \end{pmatrix}$, and $\lambdabifo \gets \lambdabifo + \hlambdabif(\symb _\star)$
 
 \item $\nfbifo(\th) \gets \nfbifo(\th) +  \fo(\th)
 \begin{pmatrix}
  \xi _{\nfbif}^{\tf}(\th) \\ \xi _{\nfbif}^{\nfvbif}(\th) \\ \xi _{\nfbif}^{\nfbif}(\th)
 \end{pmatrix}$ and $\lamb _{\nfbifo} \gets \lamb _{\nfbifo}  +
 \begin{pmatrix}
  \lambda _i^s + \fou {(\eta ^{ss}_{i,i})}_0 + \fou{(c ^{ss}_{i,i})}_0  & 0 \\ 
  0 & \lambda _j^u + \fou{(\eta ^{uu}_{j,j})} _0 + \fou {(c ^{uu}_{j,j})} _0
 \end{pmatrix}$
 \item Iterate from step \ref{alg.Kmubifiterstep} until norms of $\eetor$, $\ee _{\nfvbif}$, $\ee _{\helpprm}$, and $\ee _{\nfbif}$ are smaller than a given $\tol$\hfill \algoendsymbol
\end{enumerate} 
\end{alg}

\subsection{Implementation details} \label{sec.implementation-details}
In this subsection, we describe the discretization choices, arithmetic backends, stopping criteria, and continuation policy used in the numerical experiments.

  The algorithms were implemented in C++ using a common code path for the torus correction, the reducibility correction, and the saddle-node correction. The unknown functions $\kk$, $
  \nfvbif$, and $\nfbif$ are represented as grid functions on a uniform tensor-product mesh of $\T ^d$. Derivatives with respect to the torus variables are computed spectrally. The
  cohomological equations are solved in Fourier space: the grid data are transformed to Fourier coefficients, the equations are solved mode-by-mode as in \eqref{eq.xiLcohom},
  \eqref{eq.xiNcohom}, and the Fourier steps of Algorithm~\ref{alg.Kmubif}, and the result is transformed back to physical space. The zero Fourier modes are treated separately, since they
  determine the parameter corrections, eigenvalue corrections, and normalization conditions.

  At each Newton step the moving frame is assembled pointwise on the mesh. In the standard case this is the frame used in \eqref{eq.projinveq}; in the saddle-node case it is the frame
  introduced in \eqref{eq.eeo.f.bif}. Defects and parameter derivatives are projected onto this frame by solving pointwise linear systems, as in \eqref{eq.eeo.bif.coord}. This avoids
  forming a single large Newton matrix in the discretized ambient space. Moreover, the pointwise LU factorizations are independent for each $\th$ and we have parallelized them using OpenMP. After this
  projection, the remaining global computations are the small averaged systems for the tangent equations and the scalar or diagonal Fourier solves for the non-zero modes.

  The model-dependent quantities $\vf$, $\dop_\x\vf$, and $\dop_\prm\vf$ appearing in \eqref{eq.linalmostinveq} are evaluated by callbacks generated from the parser and jet-arithmetic tools
  of \cite{JorbaZ05,GimenoJZ22}. For Algorithm~\ref{alg.Kmubif}, the callback also evaluates $\dop_\bifprm\vf$, as required in \eqref{eq.b.bif}. A second callback evaluates the second-order
  directional actions that enter the correction of the distinguished direction and of the reduced normal bundle. These are the terms denoted by $\gamma$ in \eqref{eq.corr.nfv} and by $
  \sigma$ in \eqref{eq.corr.nf}. This callback structure separates the Newton--KAM solver from the specific vector field while keeping all model derivatives analytic.

  The implementation of Algorithm~\ref{alg.Kmubif} assumes that the normal direction involved in the saddle-node mechanism has been selected in advance. In the numerical code this direction
  is stored as the first column of the normal bundle, using the splitting in \eqref{eq.lambobif}. The scalar-normal case $n-d=1$ is treated separately and is used in the model
  \eqref{eq.saddle3d}. The case $n-d>1$ uses the distinguished-direction correction \eqref{eq.corr.nfv.2} and then corrects the remaining columns through the reduced-bundle equation
  \eqref{eq.corr.nfbif.2}. The present implementation assumes that the number of corrected dissipative parameters is $d$, so that the averaged tangent equations determine $\hprm$.

  The unfolding parameter used in the continuation runs is the scalar defined in \eqref{eq.helpprmo}. During pseudo-arclength continuation its target value is prescribed.
  Algorithm~\ref{alg.Kmubif} then corrects $\kk$, $\prm$, $\bifprm$, $\nfvbif$, $\nfbif$, $\lambdabif$, and $\lamb_{\nfbif}$ simultaneously so that the errors in
  \eqref{eq.eeotorsn}--\eqref{eq.eeohelpsn} are reduced. In the code, the remaining scalar freedom in the affine correction is fixed using the quadratic version of the unfolding equation
  described in Remark~\ref{rmk.quadratic-correction}; the selected root is the one closest to zero.

  Newton iterations are stopped when the relevant residual norms are smaller than the prescribed tolerance $\tol$. For Algorithm~\ref{alg.Kmu}, these residuals are the torus invariance
  defect \eqref{eq.almostinveq} and the reducibility defect \eqref{eq.normalredee}. For Algorithm~\ref{alg.Kmubif}, we monitor the residuals in \eqref{eq.eeotorsn}--\eqref{eq.eeohelpsn}.
  The norms reported in the numerical experiments are discrete maximum norms over all mesh points and components.

  The code supports both double precision and multiprecision arithmetic through the ongoing \texttt{TorKam} library, MPFR, and MPFI. The high-accuracy runs in the numerical experiments use multiprecision
  arithmetic; the precise working precision and tolerances are stated together with each experiment. Continuation data are written after accepted steps and include the corrected parameters,
  $\helpprm$, the distinguished normal rate $\lambdabif$, the remaining normal rates when present, and the final residuals. A sign change of $\lambdabif$ along a pseudo-arclength branch is
  used as the numerical indicator that the saddle-node bifurcation has been crossed.

  For direct continuation in a physical parameter, the previously converged solution is used as the initial guess for the next parameter value. Near a fold this strategy becomes ill-conditioned because $\bifprm$ is no longer a regular coordinate on the solution branch; this is the situation represented by the setup leading to Algorithm~\ref{alg.Kmubif}. In pseudo-arclength mode, the continuation step is instead taken in $\helpprm$. If Newton fails, the last accepted state is restored and the step is reduced without going below a minimum. If Newton is accepted in few iterations, the step is increased without exceeding a maximum.

% In this subsection, we explain the discretization choices, working precision, stopping criteria, and continuation step policy used in the numerical experiments.

\section{Numerical Experiments}

We now illustrate the algorithms on two model problems. The first one is a higher-dimensional toy model for which an explicit unperturbed torus and normal bundle are available. This
  example is used to test Algorithm~\ref{alg.Kmubif} in the case $n-d>1$, where one normal direction is distinguished and the remaining normal directions form a reduced hyperbolic bundle.
  The second one is the three-dimensional saddle-node model \eqref{eq.saddle3d}, where $n-d=1$ and the distinguished direction is the whole normal bundle.

  The purpose of the experiments is threefold. First, we verify the Newton convergence of the correction equations by monitoring the residuals associated with the torus and bundle
  invariance equations. Second, we show that the unfolding parameter $\helpprm$ allows continuation through values where direct continuation in the physical bifurcation parameter becomes
  ill-conditioned. Third, we record the corrected parameters and normal rates, in particular $\lambdabif$, whose sign change provides the numerical signature of crossing the saddle-node
  bifurcation.

  All computations use the implementation described in Section~\ref{sec.implementation-details}. Unless otherwise stated, the tori are discretized on uniform Fourier grids, the vector-field
  derivatives are evaluated analytically through the generated callbacks, and Newton iterations are stopped when the residuals fall below the tolerances specified in each experiment.

The algorithms are expected to be quadratically convergent once the initial guess is sufficiently close to a true solution. For the model \eqref{eq.saddle3d}, Figure~\ref{fig.s3d-errors}
  shows this behavior for several mesh sizes by plotting the residuals of the torus equation and the distinguished-direction equation during the first Newton correction.

\begin{figure}[ht]
  \centering
  \caption{Using 211-digits arithmetic and a model \eqref{eq.saddle3d}, residual errors in different meshes of $(\th _1, \th _2)$ of the torus $\ee$ and the distinguished direction $\ee _\nfvbif$. The last panel shows raw CPU time with no parallelization}
  \label{fig.s3d-errors}
  \setlength{\tabcolsep}{.5em} % horizontal padding between columns  
  \begin{tabular}{@{}ccc@{}}
    \parbox[t]{.24\textwidth}{\includegraphics[width=\linewidth,page=1]{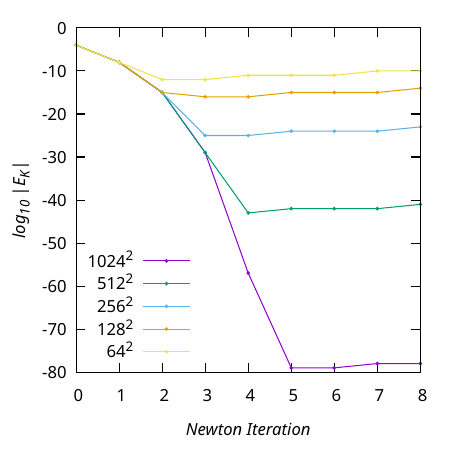}} &
    \parbox[t]{.24\textwidth}{\includegraphics[width=\linewidth,page=2]{out-s3d-702_errors.pdf}} &
    \parbox[t]{.25\textwidth}{\includegraphics[width=\linewidth,page=3]{out-s3d-702_errors.pdf}} 
  \end{tabular}
\end{figure}

\label{sec:numerical}
\subsection{Toy saddle-node model}
To assess the accuracy and convergence of the proposed algorithms, we first consider a synthetic model for
which the invariant torus and its normal fibers are available in closed form. This example primarily serves
to illustrate the practical implementation of Algorithm~\ref{alg.Kmubif}. The corresponding
code has been written from scratch in C/C++ and relies on \verb|mpfr| for multiprecision arithmetic. Vector-field
evaluations are generated via the automatic parser of the \verb|taylor| package~\citep{GimenoJZ22}, which provides
a suitable output for parallelization and supports the generation of source code targeting different data
types.

The model is constructed as a perturbation of an exactly solvable system. Since the exact invariant objects
are known, this setting allows us to quantify the numerical error precisely and to verify the expected quadratic
convergence of the Newton scheme before turning to more demanding applications. The vector field is
\begin{equation} \label{eq.toy-test}
\begin{aligned}
 \dot h   &= h^2-9+\vartheta + \varepsilon (x _1 + x _3) \\
 \dot x _1&= -7(1-r_{1,2}) x _1 + \mu _1  x _2 \tilde \omega _1 + \varepsilon \cos(h) & r _{i,j} \bydef \sqrt{x_i^2+x_j^2}\\
 \dot x _2&= -7(1-r_{1,2}) x _2 - \mu _1  x _1 \tilde \omega _1 + \varepsilon \sin(h) \\
 \dot x _3&= -5(1-r_{3,4}) x _3 + \mu _2  x _4 \tilde \omega _2 + \varepsilon \sin(h) \\
 \dot x _4&= -5(1-r_{3,4}) x _4 - \mu _2  x _3 \tilde \omega _2 + \varepsilon \cos(h).
\end{aligned}
\end{equation}
Here $\vartheta$ is the bifurcation parameter associated with the $h$-direction. For $\varepsilon=0$ and
$\vartheta = 0$, \eqref{eq.toy-test} admits the explicit solution.
\begin{equation} \label{eq.toy-init-guess}
\begin{gathered}
 \kko(\th _1, \th _2) =
 \begin{pmatrix}
  3 \\ \cos (\th_1) \\ -\sin(\th _1) \\ \cos (\th_2) \\ -\sin(\th _2)
 \end{pmatrix}, \qquad
 \nfo(\th _1, \th _2) =
 \begin{pmatrix}
  1 & 0 & 0 \\
  0 &\cos \th _1 &  0 \\
  0 &-\sin \th _1 &  0 \\
  0  & 0& \cos \th _2 \\
  0  & 0& -\sin \th _2 \\
 \end{pmatrix},
 \lambo =
 \begin{pmatrix}
  6 & 0 & 0\\
  0  & 7 & 0 \\
  0  & 0 & 5
 \end{pmatrix}.
\end{gathered}
\end{equation}
These functions satisfy the invariance equations \eqref{eq.inv} and \eqref{eq.normalinveq} with frequency vector
$(\omega_1,\omega_2)=(\tilde\omega_1,\tilde\omega_2)$. Throughout the tests we fix
\[
\tilde\omega_1=2,
\qquad
\tilde\omega_2=\frac{1}{\tilde \omega _1}\frac{\sqrt{5}-1}{2},
\qquad
\mu_1=\mu_2=1,
\qquad
\vartheta = 0.
\]
The first column of $\nfo$ is the distinguished normal direction associated with the saddle-node mechanism. At the
explicit torus we have $\partial _h(h^2-9+\vartheta)=2h=6$, which explains the first normal rate in \eqref{eq.toy-init-guess}.
The remaining two normal rates are uniformly hyperbolic and correspond to the radial directions of the two oscillatory
subsystems.

The perturbations in \eqref{eq.toy-test} are deliberately chosen to be simple: the components $\dot x_1$ to $\dot x_4$
share a common dependence on $h$, making the effect of the coupling visually apparent in the distinguished direction,
while the scalar equation for $h$ provides a codimension-one bifurcation parameter that can be tracked explicitly.

All experiments for \eqref{eq.toy-test} were performed on a standard laptop (Intel i5 @ 1.80\,GHz, 4 CPUs,
8\,GB RAM).

\subsubsection{Computation using Algorithm~\ref{alg.Kmubif}}

We apply Algorithm~\ref{alg.Kmubif} with $n = 5$ and $d = 2$, treating $\vartheta$ as the bifurcation parameter and the
first normal column of $\nfo$ as the distinguished direction. The numerical code initializes the torus and the bundle with
\eqref{eq.toy-init-guess}, performs a Newton correction at fixed $\varepsilon$, and then monitors the quantities
$\vartheta$, $\lambda _c$, and the remaining normal rates $\Lambda = (\lambda _2, \lambda _3)$ produced by the bifurcation step. In the implementation,
the unfolding scalar is chosen as the pseudo-arclength quantity $\helpprm = \langle K, v_c\rangle$, where $v_c$ denotes the
distinguished normal direction.

All runs use multiprecision arithmetic and a Fourier discretization on a uniform mesh of $\T ^2$. For small values of
$\varepsilon$, the Newton method converges rapidly from \eqref{eq.toy-init-guess}, and the resulting torus tuple provides a
controlled test case for the distinguished-direction correction equations derived in Section~\ref{sec:bifurcation}.
For instance, with $\varepsilon = 10^{-2}$ and using the double-precision driver, the corrected initial state satisfies
approximately $h = 3.0002919423$, $\vartheta = 0$, $\mu _1 = 1.0000150926$, $\mu _2 = 1.0005239031$,
$\lambda _c = 5.9999723341$, and $(\lambda _2,\lambda _3) = (6.9999915112, 5.0000007209)$, with residuals
$\max |\eetor| \approx 5.7 \times 10^{-11}$ and $\max |\eered| \approx 6.1 \times 10^{-10}$. These values are
consistent with the explicit unperturbed rates $(6,7,5)$ and illustrate that the distinguished eigenvalue remains close to its
reference value before continuation starts.

\subsubsection{Pseudo-arclength continuation in \texorpdfstring{$h=\langle K,v_c\rangle$}{h=<K,vc>}}

For this toy saddle-node model, the relevant continuation parameter is not $\varepsilon$ but the unfolding scalar
$\helpprm = \langle K, v_c\rangle$. The continuation driver therefore predicts a new state by changing the target value of
$\helpprm$ and then applies Algorithm~\ref{alg.Kmubif} to correct simultaneously the torus, the distinguished normal
direction, the reduced normal bundle, and the bifurcation parameter $\vartheta$.

This continuation strategy is precisely the one needed near the turning point, because it remains well-conditioned when
the distinguished normal rate $\lambda _c$ approaches zero. Numerically, the saddle-node crossing is detected by following
the corrected value of $\lambda _c$ along the continuation branch while the algorithm updates $\vartheta$ so that the
normalization constraint is satisfied at each step.
In the same run, a first pseudo-arclength step with target increment $\Delta h = 10^{-3}$ produces an accepted state with
$h = 3.0012902766$, $\vartheta = -5.994482657 \times 10^{-3}$, and $\lambda _c = 6.0019702291$, while the remaining
normal rates stay close to $(7,5)$. This illustrates the role of the unfolding scalar: the continuation updates $\vartheta$
and the torus simultaneously to enforce the constraint $\langle K,v_c\rangle = h$ along the branch.

\subsection{3D saddle-node model} \label{sec.3dsn}
\iffalse
\subsection{Synthetic model}

The paradigm system with a saddle-node bifurcation is the ODE $\dot x = x^2 + \bifprm$. Let us consider the perturbed system with $z \bydef (x,y) \in \R \times \R ^{n-1}$, with $\varepsilon \ll 1$,
\begin{equation}
% \begin{split}
 \vf (x,y) = 
 \begin{pmatrix}
  x ^2 + \bifprm + \varepsilon  g _1(x,y, \bifprm) \\
  y + \varepsilon  g _2 (x,y, \bifprm)
 \end{pmatrix}.
% \end{split}
\end{equation}
\fi

Finally, we consider a three-dimensional model specifically designed to exhibit a quasiperiodic saddle-node bifurcation. This system allows us to demonstrate the primary contribution of this paper: the use of the unfolding parameter $\helpprm$ and the adapted Algorithm~\ref{alg.Kmubif} to compute solutions precisely crossing the bifurcation point and continue the family of tori along the turning point.
Consider a system
\begin{equation} \label{eq.saddle3d}
\begin{aligned}
    \frac{dx}{dt} &= -\omega_1 y - \omega_2 \frac{x}{\rho}z + \frac{x}{\rho}(\rho-1)\left(\mu-\left(\sigma-\frac{1}{2}\right)^2\right)\\
    \frac{dy}{dt} &= \omega_1 x - \omega_2 \frac{y}{\rho}z + \frac{y}{\rho}(\rho-1)\left(\mu-\left(\sigma-\frac{1}{2}\right)^2\right)\\
    \frac{dz}{dt} &= \omega_2\rho-\omega_2 +z\left(\mu-\left(\sigma-\frac{1}{2}\right)^2\right)
\end{aligned}
\end{equation}
where $\omega_1$, $\omega_2$ and $\mu$ are real parameters and $\rho=\sqrt{x^2+y^2}$, $\sigma=\sqrt{(\rho-1)^2+z^2}$.
When $\mu = 0$, $\sigma=1/2$ gives a non-hyperbolic invariant torus -- this is a saddle-node bifurcation of the torus, from which two branches continue to $\mu > 0$.

\iffalse
Then
\[
\frac{d\sigma}{dt} = \sigma\left(\mu-\left(\sigma-\frac{1}{2}\right)^2\right)
\]

At $x=y=0$ so $\rho=0$, there is an invariant line.

At $\rho=1$ and $z=0$, there is a periodic orbit with period $2\pi/\omega$.

When $\mu<0$ there are no other invariant structures.

When $\mu = 0$, $\sigma=1/2$ gives a non-hyperbolic invariant torus -- this is a saddle-node bifurcation of the torus.

When $0<\mu<1/4$, there is a stable torus at $\sigma=1/2+\sqrt{\mu}$ and an unstable torus at $\sigma=1/2-\sqrt{\mu}$.

When $\mu=1/4$ I believe there's a Hopf bifurcation between the inner torus and the 
periodic orbit, and presumably for $\mu>1 /4$ there is only the stable torus and an unstable periodic orbit.
\marginpar{J-Ll: I don't know who wrote this but we can not have sentences like: "I believe..." We must 
fix this.}
\fi

Consider the regime $0<\mu<1/4$, and parameterize the torus\[\sigma=\sigma_0:=1/2+\sqrt{\mu}\] by
\begin{equation*}
\begin{aligned}
    x &= \left(1+\sigma_0\cos\phi\right)\cos\theta,\\
    y &= \left(1+\sigma_0\cos\phi\right)\sin\theta,\\
    z &= \sigma_0 \sin \phi
\end{aligned}
\end{equation*}
for $\phi,\theta\in[0,2\pi)$. The dynamics reduce to
\begin{equation*}
\begin{aligned}
     \frac{dx}{dt} &= -\omega_1 y - \omega_2 \frac{x}{\rho}z,\\
     \frac{dy}{dt} &= \omega_1 x - \omega_2 \frac{y}{\rho}z,\\
     \frac{dz}{dt} &= \omega_2(\rho-1),
\end{aligned}
\end{equation*}
so $\frac{d\theta}{dt} = \omega_1$ and $\frac{d\phi}{dt} = \omega_2$.
Observe that this is purely quasiperiodic motion, with no possibility of phase locking. This makes it a poor model of the situation in most physical systems. To remedy this we could add additional terms to the equations with small parameters.

The normal vector to the $\sigma = \sigma_0$ torus at local coordinates $(\phi,\theta)$ is given by $\nabla\sigma = [\cos\theta \cos \phi, \sin\theta \sin\phi, \sin\phi]$, and since $\frac{d\sigma}{dt}$ depends only on $\sigma$, the normal dynamics are trivial.

\subsubsection{Model and initial conditions for Algorithm~\ref{alg.Kmubif}}
To apply Algorithm~\ref{alg.Kmubif} we first translate the description of the model \eqref{eq.saddle3d} into the setting in Section~\ref{sec:bifurcation}. Let us consider a vector field with an $\varepsilon$-perturbation equivalent to \eqref{eq.saddle3d} when $\varepsilon=0$, that is,
\begin{equation} \label{eq.saddle3d-run}
    \begin{aligned}
    F_1(z;\mu,C) &= -A z_2 - B \frac{z_1}{\sqrt{z_1^2+z_2^2}}z_3 + \frac{z_1}{\sqrt{z_1^2+z_2^2}}(\sqrt{z_1^2+z_2^2}-1)\left(C-\left(\sqrt{(\sqrt{z_1^2+z_2^2}-1)^2+z_3^2}-\frac{1}{2}\right)^2\right)\\
    F_2(z;\mu,C) &= A z_1 - B \frac{z_2}{\sqrt{z_1^2+z_2^2}}z_3 + \frac{z_2}{\sqrt{z_1^2+z_2^2}}(\sqrt{z_1^2+z_2^2}-1)\left(C-\left(\sqrt{(\sqrt{z_1^2+z_2^2}-1)^2+z_3^2}-\frac{1}{2}\right)^2\right)\\
    F_3(z;\mu,C) &= B\sqrt{z_1^2+z_2^2} - B +z_3\left(C-\left(\sqrt{(\sqrt{z_1^2+z_2^2}-1)^2+z_3^2}-\frac{1}{2}\right)^2\right) + \varepsilon z _2 ^2
    \end{aligned}
\end{equation}
where $d = 2$ and $n = 3$. For $\varepsilon=0$, the system has an explicit solution for a frequency $\omega = (A,B) \bydef (1,\frac{\sqrt{5}-1}{2})$, given by 
\begin{equation} \label{eq.saddle3d-init}
\renewcommand{\arraystretch}{2}
\begin{array}{l @{\hspace{2em}} l @{\hspace{2em}} l}
\kk_1(\theta) = \left(1 +\left(\frac{1}{2} + \sqrt{C _0}\right)\cos\theta_2\right)\cos\theta_1 & \nf_{11}(\theta) = \cos\theta_1 \cos\theta_2 & \lambdabif = -2 \left(\frac{1}{2} + \sqrt{C _0}\right)\sqrt{C _0} \\
    \kk_2(\theta) = \left(1 +\left(\frac{1}{2} + \sqrt{C _0}\right)\cos\theta_2\right)\sin\theta_1 & \nf_{21}(\theta) = \sin\theta_1 \cos\theta_2 & \mu = (A ,B ) \\
    \kk_3(\theta) = \left(\frac{1}{2} + \sqrt{C _0}\right)\sin\theta_2 &  \nf_{31}(\theta) = \sin\theta_2
\end{array}
\end{equation}
The initial value of $\mu$ is $(1,\frac{\sqrt{5}-1}{2})$ and $C _0 = 2 \times 10^{-3}$ (hence, $\lambdabifo \approx \mathtt{-0.04872135}$). We use $\varepsilon$ as continuation parameter and use $C$ as $\bifprm$ in Algorithm~\ref{alg.Kmubif}.

\subsubsection{Continuation of Algorithm~\ref{alg.Kmubif}}

The experiment for this example has been performed on an Intel(R) Xeon(R) w5-3433, 16 CPUs, and 62 GB RAM. We used a $512\times512$ mesh for $\theta$, 211-digit precision, $\mathtt{tol}=10^{-50}$ for the initial Newton solver, and $\mathtt{tolc}=10^{-16}$ for Newton iterations during continuation (reset to $\mathtt{tol}$ upon reaching the final continuation value).
The size of the continuation step was adapted by multiplying it by $1.1$ if Newton is successful in fewer than 3 iterates and by $0.8$ when Newton fails. In the event of 4 consecutive failures, the continuation method is considered unsuccessful.

Figure~\ref{fig.sn-conti} shows the values of the stability and the corrected parameter $\prm _2$ vs the continuation and pseudo-arclength continuation.  Figure~\ref{fig.sn-conti-plots} shows, for the same continuation, torus, and distinguished direction solution plots.
\begin{figure}[h]
\centering
 \caption{Continuation of the eigenvalue $\lambdabif$ and $\prm _2$ w.r.t. $\helpprm$ and $\varepsilon$. Starting from $\bifprmo=0.1$ and $\helpprmo=0.0002$} \label{fig.sn-conti}
  \begin{tabular}{@{}cccc@{}}
    \parbox[t]{.23\textwidth}{\includegraphics[width=\linewidth,page=1]{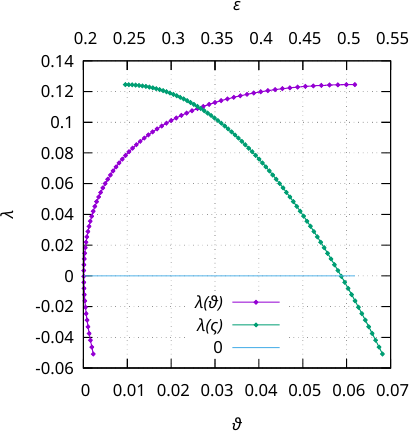}} &
    \parbox[t]{.23\textwidth}{\includegraphics[width=\linewidth,page=2]{plot-vaps-crop.pdf}} &
    \parbox[t]{.23\textwidth}{\includegraphics[width=\linewidth,page=3]{plot-vaps-crop.pdf}} &
    \parbox[t]{.23\textwidth}{\includegraphics[width=\linewidth,page=4]{plot-vaps-crop.pdf}}
  \end{tabular}
\end{figure}

% \begin{figure}[h]
% \centering
%  \includegraphics[scale=.7,page=1]{bkps/images/simple3d-cnt-1e-4.pdf} \quad
%  \includegraphics[scale=.7,page=2]{bkps/images/simple3d-cnt-1e-4.pdf}
%  \caption{\red{todo}}
% \end{figure}

\begin{figure}[h]
\centering
 \caption{Top panels initial: (1,1) solution after the first Newton step; (1,2) distinguished solution. Bottom panels: torus sliced of continuation solutions.} \label{fig.sn-conti-plots}
  \begin{tabular}{@{}cc@{}}
    \parbox[t]{.35\textwidth}{\includegraphics[width=\linewidth,page=1]{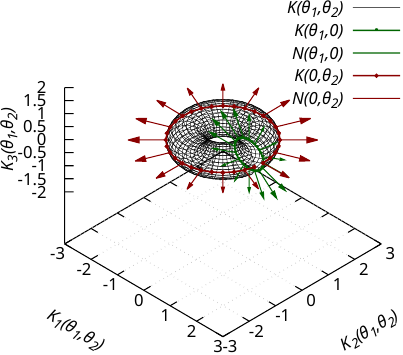}} &
    \parbox[t]{.35\textwidth}{\includegraphics[width=\linewidth,page=2]{plot_tor_cat-crop.pdf}}
  \end{tabular}
  \\
  \begin{tabular}{@{}ccc@{}}
    \parbox[t]{.3\textwidth}{\includegraphics[width=\linewidth,page=3]{plot_tor_cat-crop.pdf}} &
    \parbox[t]{.3\textwidth}{\includegraphics[width=\linewidth,page=4]{plot_tor_cat-crop.pdf}} &
    \parbox[t]{.3\textwidth}{\includegraphics[width=\linewidth,page=5]{plot_tor_cat-crop.pdf}}
  \end{tabular}
\end{figure}

% \begin{figure}[h]
% \centering
%  \includegraphics[scale=.62,page=1]{simple3d-solcnt-1e-4.pdf} 
%  \includegraphics[scale=.62,page=2]{simple3d-solcnt-1e-4.pdf}
%  \caption{\red{todo}}
% \end{figure}

% \begin{figure}[h]
% \centering
%  \includegraphics[scale=.4,page=3]{simple3d-solcnt-1e-4.pdf} 
%  \includegraphics[scale=.4,page=4]{simple3d-solcnt-1e-4.pdf}
%  \includegraphics[scale=.4,page=5]{simple3d-solcnt-1e-4.pdf}
%  \caption{\red{todo}}
% \end{figure}

\section{Discussion and Conclusions}
\label{sec:discussion}
In this paper, we have presented a robust numerical framework for the computation and continuation of normally hyperbolic invariant tori in dissipative autonomous systems. By leveraging the parameterization method, we developed a Newton-KAM scheme that avoids the inversion of large, poorly conditioned matrices, making it suitable for high-dimensional phase spaces. A key contribution of this work is the adaptation of the algorithm to handle quasiperiodic saddle-node bifurcations. By introducing an artificial unfolding parameter $\helpprm$ based on a normalization condition, we were able to transform the singular problem at the bifurcation point into a regular continuation problem.

Our numerical experiments on synthetic and benchmark models confirm the efficiency and reliability of the method. The test models demonstrate quadratic convergence of the error functions, while the application to the three-dimensional saddle-node model shows that the algorithm can precisely locate and cross turning points in the parameter space without loss of numerical stability.

Future research directions include the extension of this methodology to non-autonomous systems and quasiperiodically forced oscillators, where the interaction between multiple frequencies can lead to more complex bifurcation scenarios. Furthermore, it would be of great interest to develop algorithms that can handle hyperbolic subspaces that do not decompose into one-dimensional fibers. In principle, this is possible since such cases merely add contraction-expansion on the normal directions without changing the fundamental structure of the bundles, but there is currently a lack of robust algorithms dealing with this specific scenario. Finally, exploring the scalability of the method to high-dimensional partial differential equations (PDEs) through the use of efficient Fourier-spectral implementations remains an area of active interest.

A key practical point is arithmetic precision. In KAM computations intended for rigorous validation, 
the required a posteriori bounds typically involve scales that are beyond reliable double-precision 
resolution; see \citep{FiguerasHaroLuque2017,FiguerasHaro2025}. Although this paper does not 
include a computer-assisted proof, we carried out all computations in multiprecision following 
that validation-oriented philosophy, so that the numerics are compatible with future 
rigorous extensions. The implementation is based on the \texttt{Torkam} library, 
under active development, which provides precision-agnostic infrastructure (from standard 
floating point to arbitrary precision), Fourier/cohomological solvers, and continuation 
tools for torus and normal-bundle equations within a unified code path.

A further important consideration in practical applications of this method is how to find sufficiently accurate initial guesses $K_0$ and $N_0$ so that the Newton method converges. The two examples we presented are perturbations of systems where an exact quasiperiodic solution can be derived, along with its normal bundle. In real-world problems, even when there exists a stable torus, so that $K_0$ can be found accurately with a sufficiently long numerical integration of the governing ODE, finding $N_0$ is not immediate. This is most easily done via covariant Lyapunov vectors, whose computation is known to be challenging \citep{kuptsov2012theory, ginelli2013covariant}.

\section*{Acknowledgments}
JG has been supported by the Spanish
grant PID2021-125535NB-I00 (MICINN/AEI/FEDER, UE), the Catalan grant 2021
SGR 01072, and by the Air Force Office of Scientific
Research under award number FA8655-24-1-7059. The project that led to these results also received
the support of a fellowship from ``la Caixa'' Foundation (ID
100010434), the fellowship code is LCF/BQ/PR23/11980047. Jordi-Llu\'is Figueras has been partially supported by the grant VR Grant 2024-04764.

\section*{Statements and Declarations}
GenAI Codex was used to polish parts of the text and to assist bibliography exploration.

\addcontentsline{toc}{section}{References}
{\small \printbibliography}

@preamble{"\newcommand{\noopsort}[1]{} "}

@string{CHAOS = "Chaos"}

@string{EXPMA = "Exp. Math."}

@string{SIADS = "SIAM J. Appl. Dyn. Syst."}

@inproceedings{DeLaLlave2001KAM,
  title={A tutorial on KAM theory},
  author={De la Llave, Rafael and others},
  booktitle={Proceedings of Symposia in Pure Mathematics},
  volume={69},
  pages={175--296},
  year={2001},
  organization={Providence, RI; American Mathematical Society; 1998}
}

@article{Kolmogorov1954,
  author = {Kolmogorov, A. N.},
  title = {On conservation of conditionally periodic motions for a small change in Hamilton's function},
  journal = {Doklady Akademii Nauk SSSR},
  volume = {98},
  pages = {527--530},
  year = {1954}
}

@article{Arnold1963Proof,
  author = {Arnold, V. I.},
  title = {Proof of a theorem of A. N. Kolmogorov on the preservation of conditionally periodic motions under a small perturbation of the Hamiltonian},
  journal = {Uspekhi Matematicheskikh Nauk},
  volume = {18},
  number = {5(113)},
  pages = {13--40},
  year = {1963}
}

@article{Moser1962,
  author = {Moser, J.},
  title = {On invariant curves of area-preserving mappings of an annulus},
  journal = {Nachrichten der Akademie der Wissenschaften in G{\"o}ttingen, II. Mathematisch-Physikalische Klasse},
  year = {1962},
  pages = {1--20}
}

@article{HaroDeLaLlave2006PartII,
  title={A parameterization method for the computation of invariant tori and their whiskers in quasi-periodic maps: numerical algorithms},
  author={Haro, Alex and de la Llave, Rafael},
  journal={Discrete and Continuous Dynamical Systems Series B},
  volume={6},
  number={6},
  pages={1261},
  year={2006},
  publisher={AIMS PRESS}
}

@article{RuelleTakens1971,
  title={On the nature of turbulence},
  author={Ruelle, David and Takens, Floris},
  journal={Communications in Mathematical Physics},
  volume={20},
  number={3},
  pages={167--192},
  year={1971},
  publisher={Springer}
}

@book{Haro2016Parameterization,
  title={The Parameterization Method for Invariant Manifolds: From Rigorous Results to Effective Computations},
  author={Haro, {\`A}lex and Canadell, Marta and Figueras, Jordi-Lluis and Luque, Alejandro and Mondelo, Josep Maria},
  volume={195},
  year={2016},
  publisher={Springer}
}

@article{Jorba2001Numerical,
  author = {Jorba, {\`A}.},
  title = {Numerical computation of the normal behaviour of invariant curves of n-dimensional maps},
  journal = {Nonlinearity},
  volume = {14},
  number = {5},
  pages = {943--976},
  year = {2001}
}

@article{albers2006routes,
  title={Routes to chaos in high-dimensional dynamical systems: A qualitative numerical study},
  author={Albers, DJ and Sprott, JC},
  journal={Physica D: Nonlinear Phenomena},
  volume={223},
  number={2},
  pages={194--207},
  year={2006},
  publisher={Elsevier}
}

@article{lan2006newton,
  title={Newton’s descent method for the determination of invariant tori},
  author={Lan, Y and Chandre, C and Cvitanovi{\'c}, P},
  journal={Physical Review E—Statistical, Nonlinear, and Soft Matter Physics},
  volume={74},
  number={4},
  pages={046206},
  year={2006},
  publisher={APS}
}

@book{HirschPughShub1977,
  author    = {Hirsch, M. W. and Pugh, C. C. and Shub, M.},
  title     = {Invariant Manifolds},
  series    = {Lecture Notes in Mathematics},
  volume    = {583},
  publisher = {Springer-Verlag},
  address   = {Berlin-New York},
  year      = {1977}
}

@misc{GimenoJZ22,
   AUTHOR = {Gimeno, J. and Jorba, {\`A}. and Zou, M.},
    TITLE = {{T}aylor package, version 2}, 
     YEAR = {2022},
     NOTE = {\url{https://github.com/joang/taylor2-dist}},
}

@article{JorbaZ05,
  author = {Jorba, {\`A}. and Zou, M.},
  title = {A software package for the numerical integration of
           {ODE}s by means of high-order {T}aylor methods},
  year = 2005,
  journal = EXPMA,
  volume = {14},
  number = {1},
  pages = {99--117},
}

@Book{ChaosBook,
  title =     {Chaos: Classical and Quantum},
  publisher = {Niels Bohr Inst.},
  year =      {2016},
  author =    {P. Cvitanovi{\'c} and R. Artuso and R. Mainieri and G. Tanner and G. Vattay},
  address =   {Copenhagen},
  url =       {http://ChaosBook.org/}
}

@article{parker2022invariant,
  title={Invariant tori in dissipative hyperchaos},
  author={Parker, Jeremy P and Schneider, Tobias M},
  journal={Chaos: An Interdisciplinary Journal of Nonlinear Science},
  volume={32},
  number={11},
  year={2022},
  publisher={AIP Publishing}
}

@article{parker2023predicting,
  title={Predicting chaotic statistics with unstable invariant tori},
  author={Parker, Jeremy P and Ashtari, Omid and Schneider, Tobias M},
  journal={Chaos: An Interdisciplinary Journal of Nonlinear Science},
  volume={33},
  number={8},
  year={2023},
  publisher={AIP Publishing}
}

@article{cvitanovic2007continuous,
  title={Continuous symmetry reduced trace formulas},
  author={Cvitanovic, Predrag},
  journal={ChaosBook. org/~ predrag/papers/trace. pdf},
  year={2007}
}

@article{sanchez2010computation,
  title={Computation of invariant tori by Newton--Krylov methods in large-scale dissipative systems},
  author={S{\'a}nchez, J and Net, M and Sim{\'o}, C},
  journal={Physica D: Nonlinear Phenomena},
  volume={239},
  number={3-4},
  pages={123--133},
  year={2010},
  publisher={Elsevier}
}

@article{van2005quasi,
  title={The quasi-periodic doubling cascade in the transition to weak turbulence},
  author={Van Veen, Lennaert},
  journal={Physica D: Nonlinear Phenomena},
  volume={210},
  number={3-4},
  pages={249--261},
  year={2005},
  publisher={Elsevier}
}

@article{budanur2015periodic,
  title={Periodic orbit analysis of a system with continuous symmetry—A tutorial},
  author={Budanur, Nazmi Burak and Borrero-Echeverry, Daniel and Cvitanovi{\'c}, Predrag},
  journal={Chaos: An Interdisciplinary Journal of Nonlinear Science},
  volume={25},
  number={7},
  year={2015},
  publisher={AIP Publishing}
}

@article{lopez2005relative,
  title={Relative Periodic Solutions of the Complex Ginzburg--Landau Equation},
  author={L{\'o}pez, Vanessa and Boyland, Philip and Heath, Michael T and Moser, Robert D},
  journal={SIAM Journal on Applied Dynamical Systems},
  volume={4},
  number={4},
  pages={1042--1075},
  year={2005},
  publisher={SIAM}
}

@article{parker2022variational,
  title={Variational methods for finding periodic orbits in the incompressible Navier--Stokes equations},
  author={Parker, Jeremy P and Schneider, Tobias M},
  journal={Journal of Fluid Mechanics},
  volume={941},
  pages={A17},
  year={2022},
  publisher={Cambridge University Press}
}

@article{newhouse1978occurrence,
  title={{Occurrence of strange axiom A attractors near quasi periodic flows on $T^m, m\geq 3$}},
  author={Newhouse, Sheldon and Ruelle, David and Takens, Floris},
  journal={Communications in Mathematical Physics},
  volume={64},
  number={1},
  pages={35--40},
  year={1978},
  publisher={Springer}
}

@article{swinney1978transition,
  title={The transition to turbulence},
  author={Swinney, Harry L and Gollub, Jerry P},
  journal={Physics today},
  volume={31},
  number={8},
  pages={41--49},
  year={1978},
  publisher={American Institute of Physics}
}

@article{mainieri1989two,
  title={Two-parameter study of the quasiperiodic route to chaos in convecting- 4 3 He mixtures},
  author={Mainieri, Ronnie and Sullivan, Timothy S and Ecke, Robert E},
  journal={Physical review letters},
  volume={63},
  number={21},
  pages={2357},
  year={1989},
  publisher={APS}
}

@article{figueras2017numerical,
  title={Numerical computations and computer assisted proofs of periodic orbits of the Kuramoto--Sivashinsky equation},
  author={Figueras, Jordi-Llu{\'i}s and de la Llave, Rafael},
  journal={SIAM Journal on Applied Dynamical Systems},
  volume={16},
  number={2},
  pages={834--852},
  year={2017},
  publisher={SIAM}
}

@article{viswanath2003symbolic,
  title={Symbolic dynamics and periodic orbits of the Lorenz attractor},
  author={Viswanath, Divakar},
  journal={Nonlinearity},
  volume={16},
  number={3},
  pages={1035},
  year={2003},
  publisher={IOP Publishing}
}

@article{song2026multiscale,
  title={Multiscale quasi time-periodic coherent structures in shear flows},
  author={Song, Runjie and Deguchi, Kengo and Kawahara, Genta and Hwang, Yongyun},
  journal={arXiv preprint arXiv:2601.18023},
  year={2026}
}

@article{doohan2022state,
  title={The state space and travelling-wave solutions in two-scale wall-bounded turbulence},
  author={Doohan, Patrick and Bengana, Yacine and Yang, Qiang and Willis, Ashley P and Hwang, Yongyun},
  journal={Journal of Fluid Mechanics},
  volume={947},
  pages={A41},
  year={2022},
  publisher={Cambridge University Press}
}

@article{broer1997,
   author = {Broer and W., H. and Osinga and M., H. and Vegter and G.},
   title = {Algorithms for computing normally hyperbolic invariant manifolds},
   journal = {Zeitschrift für angewandte Mathematik und Physik},
   volume = {48},
   number = {3},
   pages = {480},
   ISSN = {0044-2275},
   DOI = {10.1007/s000330050044},
   url = {https://dx.doi.org/10.1007/s000330050044},
   year = {1997},
   type = {Journal Article}
}

@article{cabre2003i,
   author = {Cabr{\'e}, Xavier and Fontich, Ernest and de la Llave, Rafael},
   title = {The Parameterization Method for Invariant Manifolds I: Manifolds Associated to Non-resonant Subspaces},
   journal = {Indiana University Mathematics Journal},
   volume = {52},
   number = {2},
   pages = {283–328},
   ISSN = {00222518, 19435258},
   url = {http://www.jstor.org/stable/24902854},
   year = {2003},
   type = {Journal Article}
}

@article{cabre2003ii,
   author = {Cabr{\'e}, Xavier and Fontich, Ernest and de la Llave, Rafael},
   title = {The Parameterization Method for Invariant Manifolds II: Regularity with Respect to Parameters},
   journal = {Indiana University Mathematics Journal},
   volume = {52},
   number = {2},
   pages = {329–360},
   ISSN = {00222518, 19435258},
   url = {http://www.jstor.org/stable/24902855},
   year = {2003},
   type = {Journal Article}
}

@article{cabre2005,
   author = {Cabr{\'e}, Xavier and Fontich, Ernest and De La Llave, Rafael},
   title = {The parameterization method for invariant manifolds III: overview and applications},
   journal = {Journal of Differential Equations},
   volume = {218},
   number = {2},
   pages = {444–515},
   ISSN = {0022-0396},
   DOI = {10.1016/j.jde.2004.12.003},
   url = {https://dx.doi.org/10.1016/j.jde.2004.12.003},
   year = {2005},
   type = {Journal Article}
}

@article{calleja2025,
   author = {Calleja, Renato C. and Haro, Alex and Porras, Pedro},
   title = {Constructive approaches to QP-time-dependent KAM theory for Lagrangian tori in Hamiltonian systems},
   journal = {Journal of Differential Equations},
   volume = {449},
   pages = {113681},
   ISSN = {0022-0396},
   DOI = {10.1016/j.jde.2025.113681},
   url = {https://dx.doi.org/10.1016/j.jde.2025.113681},
   year = {2025},
   type = {Journal Article}
}

@article{figueras2024,
   author = {Figueras, Jordi-Llu{\'i}s and Haro, Alex},
   title = {A modified parameterization method for invariant Lagrangian tori for partially integrable Hamiltonian systems},
   journal = {Physica D: Nonlinear Phenomena},
   volume = {462},
   pages = {134127},
   ISSN = {0167-2789},
   DOI = {10.1016/j.physd.2024.134127},
   url = {https://dx.doi.org/10.1016/j.physd.2024.134127},
   year = {2024},
   type = {Journal Article}
}

@article{haro2019,
   author = {Haro, Alex and Luque, Alejandro},
   title = {A-posteriori KAM theory with optimal estimates for partially integrable systems},
   journal = {Journal of Differential Equations},
   volume = {266},
   number = {2-3},
   pages = {1605–1674},
   ISSN = {0022-0396},
   DOI = {10.1016/j.jde.2018.08.003},
   url = {https://dx.doi.org/10.1016/j.jde.2018.08.003},
   year = {2019},
   type = {Journal Article}
}

@incollection{canadell2016newton,
  title={A Newton-like method for computing normally hyperbolic invariant tori},
  author={Canadell, Marta and Haro, {\`A}lex},
  booktitle={The Parameterization Method for Invariant Manifolds: From Rigorous Results to Effective Computations},
  pages={187--238},
  year={2016},
  publisher={Springer}
}

@incollection{figueras2016parameterization,
  title={The Parameterization Method for Quasi-Periodic Systems: From Rigorous Results to Validated Numerics},
  author={Figueras, Jordi-Llu{\'\i}s and Haro, {\`A}lex},
  booktitle={The Parameterization Method for Invariant Manifolds: From Rigorous Results to Effective Computations},
  pages={75--117},
  year={2016},
  publisher={Springer}
}

@article{HdlLL1,
   author = {Haro, A. and De La Llave, R.},
   title = {A parameterization method for the computation of invariant tori and their whiskers in quasi-periodic maps: Rigorous results},
   journal = {Journal of Differential Equations},
   volume = {228},
   number = {2},
   pages = {530–579},
   ISSN = {0022-0396},
   DOI = {10.1016/j.jde.2005.10.005},
   url = {https://dx.doi.org/10.1016/j.jde.2005.10.005},
   year = {2006},
   type = {Journal Article}
}

@article{HdlLL3,
   author = {Haro, A. and De La Llave, R.},
   title = {A Parameterization Method for the Computation of Invariant Tori and Their Whiskers in Quasi‐Periodic Maps: Explorations and Mechanisms for the Breakdown of Hyperbolicity},
   journal = {SIAM Journal on Applied Dynamical Systems},
   volume = {6},
   number = {1},
   pages = {142–207},
   ISSN = {1536-0040},
   DOI = {10.1137/050637327},
   url = {https://dx.doi.org/10.1137/050637327},
   year = {2007},
   type = {Journal Article}
}

@article{HdlLL2,
   author = {Haro, Àlex and De La Llave, Rafael},
   title = {A parameterization method for the computation of invariant tori and their whiskers in quasi-periodic maps: Numerical algorithms},
   journal = {Discrete \& Continuous Dynamical Systems - B},
   volume = {6},
   number = {6},
   pages = {1261–1300},
   ISSN = {1553-524X},
   DOI = {10.3934/dcdsb.2006.6.1261},
   url = {https://dx.doi.org/10.3934/dcdsb.2006.6.1261
https://www.aimsciences.org/article/doi/10.3934/dcdsb.2006.6.1261},
   year = {2006},
   type = {Journal Article}
}

@book{BroerHuitemaSevryuk1996,
    AUTHOR = {Broer, Hendrik W. and Huitema, George B. and Sevryuk, Mikhail
              B.},
     TITLE = {Quasi-periodic motions in families of dynamical systems},
    SERIES = {Lecture Notes in Mathematics},
    VOLUME = {1645},
      NOTE = {Order amidst chaos},
 PUBLISHER = {Springer-Verlag, Berlin},
      YEAR = {1996},
     PAGES = {xii+196},
      ISBN = {3-540-62025-7},
   MRCLASS = {58F27 (34C27 34C35 70H05)},
  MRNUMBER = {1484969},
MRREVIEWER = {Luigi\ Chierchia},
}

@incollection{Chierchia2003,
    AUTHOR = {Chierchia, Luigi},
     TITLE = {K{AM} lectures},
 BOOKTITLE = {Dynamical systems. {P}art {I}},
    SERIES = {Pubbl. Cent. Ric. Mat. Ennio Giorgi},
     PAGES = {1--55},
 PUBLISHER = {Scuola Norm. Sup., Pisa},
      YEAR = {2003},
   MRCLASS = {37J40 (70H08)},
  MRNUMBER = {2071231},
MRREVIEWER = {Vassili\ G.\ Gelfreich},
}

@article{CallejaCellettiGimenoLlave2024,
    AUTHOR = {Calleja, Renato and Celletti, Alessandra and Gimeno, Joan and
              de la Llave, Rafael},
     TITLE = {Accurate computations up to breakdown of quasi-periodic
              attractors in the dissipative spin-orbit problem},
   JOURNAL = {J. Nonlinear Sci.},
  FJOURNAL = {Journal of Nonlinear Science},
    VOLUME = {34},
      YEAR = {2024},
    NUMBER = {1},
     PAGES = {Paper No. 12, 38},
      ISSN = {0938-8974,1432-1467},
   MRCLASS = {70K43 (37J40 37N05 70F15)},
  MRNUMBER = {4665059},
       DOI = {10.1007/s00332-023-09988-w},
       URL = {https://doi.org/10.1007/s00332-023-09988-w},
}

@article{CallejaCellettiGimenoLlave2022,
    AUTHOR = {Calleja, Renato and Celletti, Alessandra and Gimeno, Joan and
              de la Llave, Rafael},
     TITLE = {Efficient and accurate {KAM} tori construction for the
              dissipative spin-orbit problem using a map reduction},
   JOURNAL = {J. Nonlinear Sci.},
  FJOURNAL = {Journal of Nonlinear Science},
    VOLUME = {32},
      YEAR = {2022},
    NUMBER = {1},
     PAGES = {Paper No. 4, 40},
      ISSN = {0938-8974,1432-1467},
   MRCLASS = {70K43 (37J40 37N05 70F15)},
  MRNUMBER = {4346779},
MRREVIEWER = {Jessica\ Elisa\ Massetti},
       DOI = {10.1007/s00332-021-09767-5},
       URL = {https://doi.org/10.1007/s00332-021-09767-5},
}

@article{BroerHanssmannYou2005,
    AUTHOR = {Broer, Henk W. and Han\ss mann, Heinz and You, Jiangong},
     TITLE = {Bifurcations of normally parabolic tori in {H}amiltonian
              systems},
   JOURNAL = {Nonlinearity},
  FJOURNAL = {Nonlinearity},
    VOLUME = {18},
      YEAR = {2005},
    NUMBER = {4},
     PAGES = {1735--1769},
      ISSN = {0951-7715,1361-6544},
   MRCLASS = {37J20 (37G99 37J40 70E17 70E40)},
  MRNUMBER = {2150353},
MRREVIEWER = {Thomas\ Wagenknecht},
       DOI = {10.1088/0951-7715/18/4/018},
       URL = {https://doi.org/10.1088/0951-7715/18/4/018},
}

@incollection {HanssmannVanDerMeer2005,
    AUTHOR = {Han\ss mann, H. and van der Meer, J. C.},
     TITLE = {On non-degenerate {H}amiltonian {H}opf bifurcations in 3{DOF}
              systems},
 BOOKTITLE = {E{QUADIFF} 2003},
     PAGES = {476--481},
 PUBLISHER = {World Sci. Publ., Hackensack, NJ},
      YEAR = {2005},
      ISBN = {981-256-169-2},
   MRCLASS = {37J20 (34C23)},
  MRNUMBER = {2185075},
       DOI = {10.1142/9789812702067\_0077},
       URL = {https://doi.org/10.1142/9789812702067_0077},
}

@incollection {Hanssmann2005a,
    AUTHOR = {Han\ss mann, Heinz},
     TITLE = {Hamiltonian bifurcations of invariant tori with a vanishing
              {F}loquet exponent},
 BOOKTITLE = {E{QUADIFF} 2003},
     PAGES = {732--737},
 PUBLISHER = {World Sci. Publ., Hackensack, NJ},
      YEAR = {2005},
      ISBN = {981-256-169-2},
   MRCLASS = {37J20 (34C23 37J40)},
  MRNUMBER = {2185120},
       DOI = {10.1142/9789812702067\_0122},
       URL = {https://doi.org/10.1142/9789812702067_0122},
}

@article {Hanssmann2006,
    AUTHOR = {Han\ss mann, Heinz},
     TITLE = {On {H}amiltonian bifurcations of invariant tori with a
              {F}loquet multiplier {$-1$}},
   JOURNAL = {Dyn. Syst.},
  FJOURNAL = {Dynamical Systems. An International Journal},
    VOLUME = {21},
      YEAR = {2006},
    NUMBER = {2},
     PAGES = {115--145},
      ISSN = {1468-9367,1468-9375},
   MRCLASS = {37J20 (34C23 37J40)},
  MRNUMBER = {2241606},
MRREVIEWER = {Vincent\ Naudot},
       DOI = {10.1080/14689360500321440},
       URL = {https://doi.org/10.1080/14689360500321440},
}

@article {Hanssmann1998,
    AUTHOR = {Han\ss mann, Heinz},
     TITLE = {The quasi-periodic centre-saddle bifurcation},
   JOURNAL = {J. Differential Equations},
  FJOURNAL = {Journal of Differential Equations},
    VOLUME = {142},
      YEAR = {1998},
    NUMBER = {2},
     PAGES = {305--370},
      ISSN = {0022-0396,1090-2732},
   MRCLASS = {58F14 (34C23 58F05 58F27 70E15 70H05)},
  MRNUMBER = {1601868},
MRREVIEWER = {Ugo\ C.\ Bessi},
       DOI = {10.1006/jdeq.1997.3365},
       URL = {https://doi.org/10.1006/jdeq.1997.3365},
}

@incollection {Hanssmann2004,
    AUTHOR = {Han\ss mann, Heinz},
     TITLE = {A survey on bifurcations of invariant tori},
 BOOKTITLE = {New advances in celestial mechanics and {H}amiltonian systems},
     PAGES = {109--121},
 PUBLISHER = {Kluwer/Plenum, New York},
      YEAR = {2004},
      ISBN = {0-306-48117-0},
   MRCLASS = {37J20 (37J40 70H08)},
  MRNUMBER = {2083008},
}

@article {IoossLos1988,
    AUTHOR = {Iooss, G. and Los, J. E.},
     TITLE = {Quasi-genericity of bifurcations to high-dimensional invariant
              tori for maps},
   JOURNAL = {Comm. Math. Phys.},
  FJOURNAL = {Communications in Mathematical Physics},
    VOLUME = {119},
      YEAR = {1988},
    NUMBER = {3},
     PAGES = {453--500},
      ISSN = {0010-3616,1432-0916},
   MRCLASS = {58F14 (58F27)},
  MRNUMBER = {969212},
MRREVIEWER = {Dietrich\ Flockerzi},
       URL = {http://projecteuclid.org/euclid.cmp/1104162499},
}

@article {Los1988,
    AUTHOR = {Los, J\'er\^ome E.},
     TITLE = {D\'edoublement de courbes invariantes sur le cylindre: petits
              diviseurs},
   JOURNAL = {Ann. Inst. H. Poincar\'e{} Anal. Non Lin\'eaire},
  FJOURNAL = {Annales de l'Institut Henri Poincar\'e. Analyse Non
              Lin\'eaire},
    VOLUME = {5},
      YEAR = {1988},
    NUMBER = {1},
     PAGES = {37--95},
      ISSN = {0294-1449},
   MRCLASS = {58F14 (58F27)},
  MRNUMBER = {936889},
MRREVIEWER = {Henk\ Broer},
       URL = {http://www.numdam.org/item?id=AIHPC_1988__5_1_37_0},
}

@article {SekikawaInaba2016,
    AUTHOR = {Sekikawa, Munehisa and Inaba, Naohiko},
     TITLE = {Doubly twisted {N}eimark-{S}acker bifurcation and two
              coexisting two-dimensional tori},
   JOURNAL = {Phys. Lett. A},
  FJOURNAL = {Physics Letters. A},
    VOLUME = {380},
      YEAR = {2016},
    NUMBER = {1-2},
     PAGES = {171--176},
      ISSN = {0375-9601,1873-2429},
   MRCLASS = {37G35 (39A12)},
  MRNUMBER = {3421562},
       DOI = {10.1016/j.physleta.2015.10.040},
       URL = {https://doi.org/10.1016/j.physleta.2015.10.040},
}

@article {KamiyamaInabaSekikawaEndo2014,
    AUTHOR = {Kamiyama, Kyohei and Inaba, Naohiko and Sekikawa, Munehisa and
              Endo, Tetsuro},
     TITLE = {Bifurcation boundaries of three-frequency quasi-periodic
              oscillations in discrete-time dynamical system},
   JOURNAL = {Phys. D},
  FJOURNAL = {Physica D. Nonlinear Phenomena},
    VOLUME = {289},
      YEAR = {2014},
     PAGES = {12--17},
      ISSN = {0167-2789,1872-8022},
   MRCLASS = {37M20},
  MRNUMBER = {3270944},
       DOI = {10.1016/j.physd.2014.09.001},
       URL = {https://doi.org/10.1016/j.physd.2014.09.001},
}

@article {KuznetsovSedova2016,
    AUTHOR = {Kuznetsov, Alexander P. and Sedova, Yuliya V.},
     TITLE = {The simplest map with three-frequency quasi-periodicity and
              quasi-periodic bifurcations},
   JOURNAL = {Internat. J. Bifur. Chaos Appl. Sci. Engrg.},
  FJOURNAL = {International Journal of Bifurcation and Chaos in Applied
              Sciences and Engineering},
    VOLUME = {26},
      YEAR = {2016},
    NUMBER = {8},
     PAGES = {1630019, 12},
      ISSN = {0218-1274,1793-6551},
   MRCLASS = {37G99},
  MRNUMBER = {3533660},
       DOI = {10.1142/S0218127416300196},
       URL = {https://doi.org/10.1142/S0218127416300196},
}

@incollection {Chenciner1985,
    AUTHOR = {Chenciner, Alain},
     TITLE = {Hamiltonian-like phenomena in saddle-node bifurcations of
              invariant curves for plane diffeomorphisms},
 BOOKTITLE = {Singularities and dynamical systems ({I}r\'aklion, 1983)},
    SERIES = {North-Holland Math. Stud.},
    VOLUME = {103},
     PAGES = {7--14},
 PUBLISHER = {North-Holland, Amsterdam},
      YEAR = {1985},
      ISBN = {0-444-87641-3},
   MRCLASS = {58F14},
  MRNUMBER = {806175},
       DOI = {10.1016/S0304-0208(08)72111-X},
       URL = {https://doi.org/10.1016/S0304-0208(08)72111-X},
}

@article {SchilderOsingaVogt2005,
    AUTHOR = {Schilder, Frank and Osinga, Hinke M. and Vogt, Werner},
     TITLE = {Continuation of quasi-periodic invariant tori},
   JOURNAL = {SIAM J. Appl. Dyn. Syst.},
  FJOURNAL = {SIAM Journal on Applied Dynamical Systems},
    VOLUME = {4},
      YEAR = {2005},
    NUMBER = {3},
     PAGES = {459--488},
      ISSN = {1536-0040},
   MRCLASS = {37M99 (37D10 37M20 65L99 65P20)},
  MRNUMBER = {2173544},
MRREVIEWER = {Gerd\ P\"onisch},
       DOI = {10.1137/040611240},
       URL = {https://doi.org/10.1137/040611240},
}

@article {FiguerasHaro2025,
    AUTHOR = {Figueras, Jordi-Llu\'is and Haro, Alex},
     TITLE = {Sun-{J}upiter-{S}aturn system may exist: a verified
              computation of quasiperiodic solutions for the planar
              three-body problem},
   JOURNAL = {J. Nonlinear Sci.},
  FJOURNAL = {Journal of Nonlinear Science},
    VOLUME = {35},
      YEAR = {2025},
    NUMBER = {1},
     PAGES = {Paper No. 13, 20},
      ISSN = {0938-8974,1432-1467},
   MRCLASS = {70F07 (37C55 37N05 70-08 70G60)},
  MRNUMBER = {4827105},
MRREVIEWER = {Thomas\ Kotoulas},
       DOI = {10.1007/s00332-024-10109-4},
       URL = {https://doi.org/10.1007/s00332-024-10109-4},
}

@article {FiguerasHaroLuque2017,
    AUTHOR = {Figueras, J.-Ll. and Haro, A. and Luque, A.},
     TITLE = {Rigorous computer-assisted application of {KAM} theory: a
              modern approach},
   JOURNAL = {Found. Comput. Math.},
  FJOURNAL = {Foundations of Computational Mathematics. The Journal of the
              Society for the Foundations of Computational Mathematics},
    VOLUME = {17},
      YEAR = {2017},
    NUMBER = {5},
     PAGES = {1123--1193},
      ISSN = {1615-3375,1615-3383},
   MRCLASS = {37J40 (65G20 65G40 65T50)},
  MRNUMBER = {3709329},
MRREVIEWER = {Maciej\ J.\ Capi\'nski},
       DOI = {10.1007/s10208-016-9339-3},
       URL = {https://doi.org/10.1007/s10208-016-9339-3},
}

@article{kuptsov2012theory,
  title={Theory and computation of covariant {Lyapunov} vectors},
  author={Kuptsov, Pavel V and Parlitz, Ulrich},
  journal={Journal of nonlinear science},
  volume={22},
  number={5},
  pages={727--762},
  year={2012},
  publisher={Springer}
}

@article{ginelli2013covariant,
  title={Covariant {Lyapunov} vectors},
  author={Ginelli, Francesco and Chat{\'e}, Hugues and Livi, Roberto and Politi, Antonio},
  journal={Journal of Physics A: Mathematical and Theoretical},
  volume={46},
  number={25},
  pages={254005},
  year={2013},
  publisher={IOP Publishing}
}

@article {JorbaO09,
    AUTHOR = {Jorba, {\`A}. and Olmedo, E.},
     TITLE = {On the computation of reducible invariant tori on a parallel
              computer},
   JOURNAL = SIADS,
    VOLUME = {8},
      YEAR = {2009},
    NUMBER = {4},
     PAGES = {1382--1404}
}

@article{chenciner1979bifurcations,
  title={Bifurcations de tores invariants},
  author={Chenciner, A and Iooss, G},
  journal={Archive for Rational Mechanics and Analysis},
  volume={69},
  number={2},
  pages={109--198},
  year={1979},
  publisher={Springer}
}

@article{vitolo2011quasi,
  title={Quasi-periodic bifurcations of invariant circles in low-dimensional dissipative dynamical systems},
  author={Vitolo, Renato and Broer, Henk and Sim{\'o}, Carles},
  journal={Regular and chaotic dynamics},
  volume={16},
  number={1},
  pages={154--184},
  year={2011},
  publisher={Springer}
}

@article{chenciner1979persistance,
  title={Persistance et bifurcation de tores invariants},
  author={Chenciner, A and Iooss, G},
  journal={Archive for Rational Mechanics and Analysis},
  volume={71},
  number={4},
  pages={301--306},
  year={1979},
  publisher={Springer}
}

@article{kaas1985computation,
  title={Computation of quasi-periodic solutions of forced dissipative systems},
  author={Kaas-Petersen, Chr},
  journal={Journal of Computational Physics},
  volume={58},
  number={3},
  pages={395--408},
  year={1985},
  publisher={Elsevier}
}

@article{gonzalez2022efficient,
  title={Efficient and reliable algorithms for the computation of non-twist invariant circles},
  author={Gonz{\'a}lez, Alejandra and Haro, {\`A}lex and de la Llave, Rafael},
  journal={Foundations of Computational Mathematics},
  volume={22},
  number={3},
  pages={791--847},
  year={2022},
  publisher={Springer}
}

@book{gonzalez2014singularity,
  title={Singularity theory for non-twist KAM tori},
  author={Gonz{\'a}lez-Enr{\'\i}quez, Alejandra and Haro, Alex and De la Llave, Rafael},
  volume={227},
  number={1067},
  year={2014},
  publisher={American mathematical society}
}

@unpublished{BarcelonaGJ2026,
    author = {Barcelona, Miquel and Gimeno, Joan and Jorba-Cusc\'o, Marc},
    title = {An Explicit Graph Transform Approach to Reducible Whiskered Tori in Poincar\'e Maps},
    note = {On progress}
}

@unpublished{FiguerasGLP2027,
    author = {Figueras, Jord\'i-Llu\'is and Gimeno, Joan and de la Llave, Rafael and Parker, Jeremy},
    title = {Analytical validation of invariant tori bifurcations},
    note = {work in progress}
}

@incollection {Cvitanovic91,
    AUTHOR = {Cvitanovi\'c, Predrag},
     TITLE = {Periodic orbits as the skeleton of classical and quantum
              chaos},
      NOTE = {Nonlinear science: the next decade (Los Alamos, NM, 1990)},
   JOURNAL = {Phys. D},
  FJOURNAL = {Physica D. Nonlinear Phenomena},
    VOLUME = {51},
      YEAR = {1991},
    NUMBER = {1-3},
     PAGES = {138--151},
      ISSN = {0167-2789,1872-8022},
   MRCLASS = {58F20 (58F03 58F13 58F19 81Q50)},
  MRNUMBER = {1128807},
MRREVIEWER = {Alfredo\ M.\ Ozorio de Almeida},
       DOI = {10.1016/0167-2789(91)90227-Z},
       URL = {https://doi.org/10.1016/0167-2789(91)90227-Z},
}

\appendix
\section{Algorithms}\label{sec:algorithms appendix}
Below is a step-by-step description of the two algorithms described in Section \ref{sec:computation}. 
Both start from an approximation of the torus, its normal bundle, and the 
dynamics on it, and refine these objects and 
the parameters so that the torus is invariant. The difference between them is 
that the first uses prescribed quasiperiodic inner dynamics $\omega$, while the 
second treats one component of $\omega$ also as a parameter.

\begin{alg}[Steps to correct $(\kko,\prmo)$ and $(\nf, \lamb _{\nf})$] \label{alg.Kmu}\small \
\begin{enumerate}
 {\setlength{\itemsep}{0pt}
 \item [$\star$] \texttt{Input:} ODE like \eqref{eq.model}, ergodic frequency $\omg\in \R ^{d}$. Initial guesses of embedding $\kko\colon \T ^{d}\to \R ^n$, $\prmo \in \R ^{d}$, normal bundle $\nfo \colon \T ^{d} \to \R ^{n\times (n-d)}$, and matrix $\lamb _{\nfo} = \diag (\lamb ^s, \lamb ^u) \in \R ^{(n-d)\times (n-d)}$ with $\lamb ^s = \diag(\lambda ^s _i) \in \R ^{n_s \times n _s}$ and $\lamb ^u = \diag(\lambda ^u _j) \in \R ^{n _u \times n _u}$ such that $n _s + n _u = n - d$
 \item [$\star$] \texttt{Output:} $\kk$, $\prm $ such that \eqref{eq.inv} holds and $\nf$, $\lamb _{\nf}$ such that \eqref{eq.normalredee} holds up to a given tolerance
 \item [$\star$] \texttt{Notation:} $\li _\omg[f](\th)\bydef -\dop f(\th) \omg$ and  $\langle \eta \rangle \bydef \int _{\T ^{d}} \eta (\th) \, d\th$
 }
 \item\label{alg.Kmuiterstep} $\ee(\th) \gets \li _\omg[\kko](\th) + \vf (\kko(\th); \prmo)$\hfill {\footnotesize $\rhd \ee \colon \T ^{d} \to \R ^n$}
 % \item $\tf(\th) \gets \dop \kko(\th)$ \hfill $\tf\colon \T ^{d} \to \R ^{n\times d}$
 \item $\fo(\th) \gets 
 \begin{pmatrix}
  \dop \kko(\th) & \nfo(\th)
 \end{pmatrix}$\hfill {\footnotesize $\rhd \fo \colon \T ^{d} \to \R ^{(n\times(p +1) + n \times (n-d))}$}
 \item $(\eta ^{\tfo}, \eta ^{\nfo}) \gets \fo (\th)^{-1} \ee(\th)$\hfill {\footnotesize $\rhd \eta ^{\tfo} \colon \T ^{d} \to \R ^{d}$ and $\eta ^{\nfo} \colon \T ^{d} \to \R ^{n-d}$}
 \item $(b ^{\tfo}, b ^{\nfo}) \gets \fo (\th)^{-1}\dop _\prm \vf(\kko(\th); \prmo)$\hfill {\footnotesize $\rhd (b ^{\tfo}, b ^{\nfo}) \colon \T ^{d} \to \R ^{((d)+ (n-d))\times (d)}$}
 \item Fourier step to solve $\hprm$, $\homg _0$, and $\xi ^{\tfo}(\th)$: for all $k \in \Z ^{d}$
\begin{equation*}
 \begin{split}
  \fou \xi _0^{\tfo} &=0, \qquad \text{normalization condition,}\\
  \fou \eta _0^{\tfo} + \fou b _0 ^{\tfo} \hprm &=0, \qquad \text{for }|k| = 0\text{, } \hprm \text{ is solved}, \\
  -\I (k \cdot \omg) \fou \xi _k^{\tfo} + \fou \eta _k^{\tfo} + \fou b _k ^{\tfo} \hprm &=0, \qquad \text{for }|k| \ne 0\text{, } \fou \xi _k^{\tfo} \text{ is solved}
 \end{split}
\end{equation*}
 \item Fourier step to solve $\xi ^{\nfo}(\th)$: for all $k \in \Z ^{d}$,
\begin{equation*} 
  (\lamb _{\nfo} - \I (k \cdot \omg) )\fou \xi _k^{\nfo} + \fou \eta _k^{\nfo} + \fou b _k ^{\nfo} \hprm =0
\end{equation*}
 \item $\kko(\th) \gets \kko(\th) +  \fo(\th)
 \begin{pmatrix}
  \xi ^{\tfo}(\th) \\ \xi ^{\nfo}(\th)
 \end{pmatrix}$ and $\prmo \gets \prmo + \hprm$%\hfill [correction of $(\kk,\prm)$]
 \item $\eered(\th) \gets \li _\omg [\nfo](\th) + \dop _\x \vf(\kko(\th); \prmo) \nfo(\th) - \nfo(\th) \lamb _{\nfo}  $\hfill {\small $\rhd \eered\colon \T ^{d} \to \R ^{n\times(n-d)}$}
 \item $\fo(\th) \gets 
 \begin{pmatrix}
  \dop \kko(\th) & \nfo(\th)
 \end{pmatrix}$ \hfill {\footnotesize $\rhd $ updated frame}
 \item $
 \begin{pmatrix}
  \eta _{\texttt{red}} ^{\tfo}(\th) \\ \eta _{\texttt{red}} ^{\nfo}(\th)
 \end{pmatrix} \gets \fo(\th)^{-1}\eered(\th)$\hfill {\footnotesize $\rhd \eta _{red }^{\tfo}\colon \T ^{d} \to \R ^{d\times(n-d)}$ and $\eta _{\texttt{red}}^{\nfo} \colon \T ^{d} \to \R ^{(n-d)\times(n-d)}$}
 \item $\eta _{\texttt{red}}^{\nfo} = 
 \begin{pmatrix}
  \eta ^{ss} _{\texttt{red}} & \eta ^{su} _{\texttt{red}} \\ \eta ^{us} _{\texttt{red}} & \eta ^{uu} _{\texttt{red}} 
 \end{pmatrix}$\hfill {\footnotesize $\rhd$ block view of }
 \item Solve $(\fou Q ^{ss}_{i,j}) _k$ $(\fou Q ^{uu}_{i,j}) _k$ $(\fou Q ^{su}_{i,j}) _k$, and $(\fou Q ^{us}_{i,j}) _k$, for all $k \in \Z ^{d}$,
 \begin{align*}
  (\fou Q ^{ss}_{i,i})_0 = (\fou Q ^{uu}_{i,i})_0 &=0 & &\text{normalization condition}\\
  (\lambda _i^s - \lambda _j^s)(\fou Q ^{ss}_{i,j}) _0& =-((\fou \eta ^{ss}_{\texttt{red}})_{i,j})_0 & |k| &= 0\text{, } i\ne j\\
  (\lambda _i^u - \lambda _j^u)(\fou Q ^{uu}_{i,j}) _0& =-((\fou \eta ^{uu}_{\texttt{red}})_{i,j})_0  & |k| &= 0\text{, } i\ne j\\
  (\lambda _i^s - \lambda _j^s - \I k \cdot \omg)(\fou Q ^{ss}_{i,j}) _k& =-((\fou \eta ^{ss}_{\texttt{red}})_{i,j})_k & |k| &\ne 0 \\
  (\lambda _i^u - \lambda _j^u - \I k \cdot \omg)(\fou Q ^{uu}_{i,j}) _k& =-((\fou \eta ^{uu}_{\texttt{red}})_{i,j})_k  & |k| &\ne 0 \\
  (\lambda _i^s - \lambda _j^u - \I k \cdot \omg)(\fou Q ^{su}_{i,j}) _k& =-((\fou \eta ^{su}_{\texttt{red}})_{i,j})_k  \\
  (\lambda _i^u - \lambda _j^s - \I k \cdot \omg)(\fou Q ^{us}_{i,j}) _k& =-((\fou \eta ^{us}_{\texttt{red}})_{i,j})_k 
\end{align*}
 \item Fourier step to solve $(\fou Q _{i,j}^{\tfo}) _k$ for all $k \in \Z ^{d}$,
\begin{equation*} 
  (-\lambda _j - \I k \cdot \omg)(\fou Q ^{\tfo}_{i,j}) _k =-((\fou \eta ^{\tfo}_{\texttt{red}})_{i,j})_k
\end{equation*}
 \item $\lamb _{\nfo} \gets 
 \begin{pmatrix}
  \lambda _i^s + ((\fou \eta _{\texttt{red}}^{ss})_{i,i})_0& 0 \\ 
  0 & \lambda _i^u + ((\fou \eta _{\texttt{red}}^{uu})_{i,i})_0
 \end{pmatrix}$ and $\nfo(\th) \gets \nfo (\th) + \dop \kko(\th) Q ^{\tfo}(\th) + \nfo(\th)
 \begin{pmatrix}
  Q ^{ss}(\th) & Q ^{su}(\th) \\ Q^{us}(\th) & Q ^{uu}(\th)
 \end{pmatrix}$
 \item Iterate from step \ref{alg.Kmuiterstep} until convergence of $\ee$ and $\eered$\hfill \algoendsymbol
\end{enumerate} 
\end{alg}

\begin{alg}[Steps to correct $(\kko,\prmo, \omg _0)$ and $(\nfo, \lamb _{\nfo})$] \label{alg.Kmuomg} \small \
\begin{enumerate} 
 {\setlength{\itemsep}{0pt}
 \item [$\star$] \texttt{Input:} ODE like \eqref{eq.model}, ergodic frequency $\omg = (\omg _0, \omg _1)\in \R \times \R ^{d-1}$. Initial guesses $\omg _0 \in \R$, embedding $\kko\colon \T ^{d}\to \R ^n$, $\prmo \in \R ^{d-1}$, normal bundle $\nfo \colon \T ^{d} \to \R ^{n\times (n-d)}$, and matrix $\lamb _{\nfo} = \diag (\lamb ^s, \lamb ^u) \in \R ^{(n-d)\times (n-d)}$ with $\lamb ^s = \diag(\lambda ^s _i) \in \R ^{n_s \times n _s}$ and $\lamb ^u = \diag(\lambda ^u _j) \in \R ^{n _u \times n _u}$ such that $n _s + n _u = n - d$
 \item [$\star$] \texttt{Output:} $\kk$, $\prm $, and $\omg _0$ such that $\|\seed \ee \| \leq \tol $, $\nf$ and $\lamb _{\nf}$ such that $\|\seed \ee ^{red}\| < \tol$
 \item [$\star$] \texttt{Notation:} $\li _{(\omg _0, \omg _1)}[f](\alpha, \beta)\bydef - \partial _\alpha f(\alpha, \beta)\omg _0 - \partial _\beta f (\alpha, \beta)\omg _1 = -\dop f (\alpha, \beta) \omg$ and $\langle \eta \rangle \bydef \int _{\T ^{d}} \eta (\alpha, \beta) \, d\th$
 }
%  \item $\bar \omg \gets (\omg _0, \omg _1)$
 \item \label{alg.Kmuomgiterstep}$\seed\ee(\alpha, \beta) \gets \li _{(\omg _0, \omg _1)}[\kko](\alpha, \beta) + \vf (\kko(\alpha, \beta); \prmo)$\hfill {\footnotesize $\rhd \seed\ee \colon \T ^{d} \to \R ^n$}
 % \item $\tf(\alpha, \beta) \gets \dop \kko(\alpha, \beta)$ \hfill $\tf\colon \T ^{d} \to \R ^{n\times d}$
 \item $\fo(\alpha, \beta) \gets 
 \begin{pmatrix}
  \dop \kko(\alpha, \beta) & \nfo(\alpha, \beta)
 \end{pmatrix}$\hfill {\footnotesize $\rhd \fo \colon \T ^{d} \to \R ^{(n\times d + n \times (n-d))}$}
 \item $(\eta ^{\tfo}, \eta ^{\nfo}) \gets \fo (\alpha, \beta)^{-1} \seed\ee(\alpha, \beta)$\hfill {\footnotesize $\rhd \eta ^{\tfo} \colon \T ^{d} \to \R ^{d}$ and $\eta ^{\nfo} \colon \T ^{d} \to \R ^{n-d}$}
 \item $(b ^{\tfo}, b ^{\nfo}) \gets \fo (\alpha, \beta)^{-1}
 \begin{pmatrix}
  \dop _\prm \vf(\kko(\alpha, \beta); \prmo) & \dop _{\omg _0} \vf(\kko(\alpha, \beta); \prmo)-\partial _\alpha \kko(\alpha, \beta)
 \end{pmatrix}$ {\footnotesize $\rhd (b ^{\tfo}, b ^{\nfo}) \colon \T ^{d} \to \R ^{(d+ (n-d))\times d}$}
 \item Fourier step to solve $\hprm$, $\homg _0$, and $\xi ^{\tf}(\alpha, \beta)$: for all $k \in \Z ^{d}$
\begin{equation*}
 \begin{split}
  \fou \xi _0^{\tfo} &=0, \qquad \text{normalization condition,}\\
  \fou \eta _0^{\tfo} + \fou b _0 ^{\tfo} 
  \begin{pmatrix}
   \hprm \\ \homg _0
  \end{pmatrix} &=0, \qquad \text{for }|k| = 0\text{, } (\hprm, \homg _0) \text{ is solved}, \\
  -\I (k \cdot (\omg _0, \omg _1)) \fou \xi _k^{\tfo} + \fou \eta _k^{\tfo} + \fou b _k ^{\tfo} 
  \begin{pmatrix}
   \hprm \\ \homg _0
  \end{pmatrix} &=0, \qquad \text{for }|k| \ne 0\text{, } \fou \xi _k^{\tfo} \text{ is solved}
 \end{split}
\end{equation*}
 \item Fourier step to solve $\xi ^{\nfo}(\alpha, \beta)$: for all $k \in \Z ^{d}$,
\begin{equation*} 
  (\lamb _{\nfo} - \I (k \cdot (\omg _0, \omg _1)) )\fou \xi _k^{\nfo} + \fou \eta _k^{\nfo} + \fou b _k ^{\nfo} 
  \begin{pmatrix}
   \hprm \\ \homg _0
  \end{pmatrix} =0
\end{equation*}
 \item $\kko(\alpha, \beta) \gets \kko(\alpha, \beta) +  \fo(\alpha, \beta)
 \begin{pmatrix}
  \xi ^{\tfo}(\alpha, \beta) \\ \xi ^{\nfo}(\alpha, \beta)
 \end{pmatrix}$, $\prmo \gets \prmo + \hprm$, and $ \omg _0 \gets \omg _0 + \homg _0$%\hfill [correction of $(\kk,\prm)$]
 \item $\seed\ee ^{red}(\alpha, \beta) \gets \li _{(\omg _0, \omg _1)} [\nfo](\alpha, \beta) + \dop _\x \vf(\kko(\alpha, \beta); \prmo) \nfo(\alpha, \beta) - \nfo(\alpha, \beta) \lamb _{\nfo}  $\hfill {\footnotesize $\rhd\seed\ee ^{red}\colon \T ^{d} \to \R ^{n\times(n-d)}$}
 
 \item $\fo(\alpha, \beta) \gets 
 \begin{pmatrix}
  \dop \kko(\alpha, \beta) & \nfo(\alpha, \beta)
 \end{pmatrix}$ \hfill {\footnotesize $\rhd$ updated frame}
 \item $
 \begin{pmatrix}
  \eta _{\texttt{red}} ^{\tfo}(\alpha, \beta) \\ \eta _{\texttt{red}} ^{\nfo}(\alpha, \beta)
 \end{pmatrix} \gets \fo(\alpha, \beta)^{-1}\seed\ee ^{red}(\alpha, \beta)$ \hfill {\footnotesize $\rhd \eta _{red }^{\tfo}\colon \T ^{d} \to \R ^{d\times(n-d)}$ and $\eta _{\texttt{red}}^{\nfo} \colon \T ^{d} \to \R ^{(n-d)\times(n-d)}$ }
 \item $\eta _{\texttt{red}}^{\nfo} = 
 \begin{pmatrix}
  \eta ^{ss} _{\texttt{red}} & \eta ^{su} _{\texttt{red}} \\ \eta ^{us} _{\texttt{red}} & \eta ^{uu} _{\texttt{red}} 
 \end{pmatrix}$ \hfill {\footnotesize $\rhd$ block view of $\eta _{\texttt{red}}^{\nfo}$}
 \item Solve $(\fou Q ^{ss}_{i,j}) _k$, $(\fou Q ^{uu}_{i,j}) _k$, $(\fou Q ^{su}_{i,j}) _k$, and $(\fou Q ^{us}_{i,j}) _k$, for all $k \in \Z ^{d}$,
 \begin{align*}
  (\fou Q ^{ss}_{i,i})_0 = (\fou Q ^{uu}_{i,i})_0 &=0 & &\text{normalization condition} \\
  (\lambda _i^s - \lambda _j^s)(\fou Q ^{ss}_{i,j}) _0& =-((\fou \eta ^{ss}_{\texttt{red}})_{i,j})_0 & |k| &= 0\text{, } i\ne j\\
  (\lambda _i^u - \lambda _j^u)(\fou Q ^{uu}_{i,j}) _0& =-((\fou \eta ^{uu}_{\texttt{red}})_{i,j})_0  & |k| &= 0\text{, } i\ne j\\
  (\lambda _i^s - \lambda _j^s - \I k \cdot (\omg _0, \omg _1))(\fou Q ^{ss}_{i,j}) _k& =-((\fou \eta ^{ss}_{\texttt{red}})_{i,j})_k & |k| &\ne 0 \\
  (\lambda _i^u - \lambda _j^u - \I k \cdot (\omg _0, \omg _1))(\fou Q ^{uu}_{i,j}) _k& =-((\fou \eta ^{uu}_{\texttt{red}})_{i,j})_k  & |k| &\ne 0 \\
  (\lambda _i^s - \lambda _j^u - \I k \cdot (\omg _0, \omg _1))(\fou Q ^{su}_{i,j}) _k& =-((\fou \eta ^{su}_{\texttt{red}})_{i,j})_k  \\
  (\lambda _i^u - \lambda _j^s - \I k \cdot (\omg _0, \omg _1))(\fou Q ^{us}_{i,j}) _k& =-((\fou \eta ^{us}_{\texttt{red}})_{i,j})_k 
\end{align*}
 \item Fourier step to solve $(\fou Q _{i,j}^{\tfo}) _k$ for all $k \in \Z ^{d}$,
\begin{equation*} 
  (-\lambda _j - \I k \cdot (\omg _0, \omg _1))(\fou Q ^{\tfo}_{i,j}) _k =-((\fou \eta ^{\tfo}_{\texttt{red}})_{i,j})_k
\end{equation*}
 \item $\lamb _{\nfo} \gets 
 \begin{pmatrix}
  \lambda _i^s + ((\fou \eta _{\texttt{red}}^{ss})_{i,i})_0& 0 \\ 
  0 & \lambda _j^u + ((\fou \eta _{\texttt{red}}^{uu})_{j,j})_0
 \end{pmatrix}$ and \\
 $\nfo(\alpha, \beta) \gets \nfo (\alpha, \beta) + \dop \kko(\alpha, \beta) Q ^{\tfo}(\alpha, \beta) + \nfo(\alpha, \beta)
 \begin{pmatrix}
  Q ^{ss}(\alpha, \beta) & Q ^{su}(\alpha, \beta) \\ Q^{us}(\alpha, \beta) & Q ^{uu}(\alpha, \beta)
 \end{pmatrix}$
 \item Iterate from step \ref{alg.Kmuomgiterstep} until convergence \hfill \algoendsymbol % of $\seed\ee$ and $\seed\ee ^{red}$
\end{enumerate} 
\end{alg}

\subsection{Synthetic example}
To assess the accuracy and convergence of the proposed Algorithms~\ref{alg.Kmubif} and \ref{alg.Kmuomg}, we consider a synthetic model similar to \eqref{eq.toy-test}. 
% The corresponding code has been written from scratch in C/C++ and relies on \verb|mpfr| for multiprecision arithmetic. 
\begin{equation} \label{eq.toy-test2}
\begin{aligned}
 \dot h   &= -3 h + \varepsilon (x _1 + x _3) \\
 \dot x _1&= -7(1-r_{1,2}) x _1 + \mu _1  x _2 \tilde \omega _1 + \varepsilon \cos(h) & r _{i,j} \bydef \sqrt{x_i^2+x_j^2}\\
 \dot x _2&= -7(1-r_{1,2}) x _2 - \mu _1  x _1 \tilde \omega _1 + \varepsilon \sin(h) \\
 \dot x _3&= -5(1-r_{3,4}) x _3 + \mu _2  x _4 \tilde \omega _2 + \varepsilon \sin(h) \\
 \dot x _4&= -5(1-r_{3,4}) x _4 - \mu _2  x _3 \tilde \omega _2 + \varepsilon \cos(h).
\end{aligned}
\end{equation}
Vector-field evaluations and its derivatives required in Algorithms~\ref{alg.Kmubif} and \ref{alg.Kmuomg} are generated via the automatic parser of the \verb|taylor| package~\citep{GimenoJZ22}, which provides
a suitable output for parallelization and supports the generation of source code targeting different data
types like the \verb|mpfr| or jet type.

For $\varepsilon=0$, \eqref{eq.toy-test2} admits the explicit solution.
% The parameterization currently written in \red{red} is wrong; the \textcolor{blue}{blue} one is the correct parameterization used in the code:
\begin{equation} \label{eq.toy-init-guess2}
\begin{gathered}
 % {\color{red}
 % \kko(\th _1, \th _2) =
 % \begin{pmatrix}
  3%  0 \\ \cos (\th_1) \\ \sin(\th _1) \\ \cos (\th_2) \\ \sin(\th _2)
 % \end{pmatrix}, \qquad
 % \nfo(\th _1, \th _2) =
 % \begin{pmatrix}
 %  1 & 0 & 0 \\
 %  0 &\cos \th _1 &  0 \\
 %  0 &\sin \th _1 &  0 \\
 %  0  & 0& \cos \th _2 \\
 %  0  & 0& \sin \th _2 \\
 % \end{pmatrix},
 % }\\[0.4em]
 % {\color{blue}
 \kko(\th _1, \th _2) =
 \begin{pmatrix}
  1 \\ \cos (\th_1) \\ -\sin(\th _1) \\ \cos (\th_2) \\ -\sin(\th _2)
 \end{pmatrix}, \qquad
 \nfo(\th _1, \th _2) =
 \begin{pmatrix}
  1 & 0 & 0 \\
  0 &\cos \th _1 &  0 \\
  0 &-\sin \th _1 &  0 \\
  0  & 0& \cos \th _2 \\
  0  & 0& -\sin \th _2 \\
 \end{pmatrix},
 % }\\[0.4em]
 \lambo =
 \begin{pmatrix}
  -3 & 0 & 0\\
  0  & 7 & 0 \\
  0  & 0 & 5
 \end{pmatrix}.
\end{gathered}
\end{equation}
These functions satisfy the invariance equations \eqref{eq.inv} and \eqref{eq.normalinveq} with frequency vector
$(\omega_1,\omega_2)=(\tilde\omega_1,\tilde\omega_2)$. Throughout the tests we fix
\[
\tilde\omega_1=2,
\qquad
\tilde\omega_2=\frac{1}{\tilde \omega _1}\frac{\sqrt{5}-1}{2},
\qquad
\mu_1=\mu_2=1.
\]

% All experiments for \eqref{eq.toy-test} were performed on a standard laptop (Intel i5 @ 1.80\,GHz, 4 CPUs, 8\,GB RAM). 
We illustrate Algorithms~\ref{alg.Kmu} and \ref{alg.Kmuomg} in two stages: first, we compute Newton
corrections for small values of $\varepsilon$ starting from the same initial guess \eqref{eq.toy-init-guess};
second, we perform continuation w.r.t. $\varepsilon$ and compare the resulting continuation paths.

\subsubsection{Computation using Algorithms~\ref{alg.Kmu} and~\ref{alg.Kmuomg}}

We test Algorithms~\ref{alg.Kmu} and \ref{alg.Kmuomg} in parallel for the case $n=5$ and $d=2$.
Algorithm~\ref{alg.Kmu} corrects the parameters $(\mu_1,\mu_2)$ while keeping the ergodic frequency
$(\omega_1,\omega_2)$ fixed. In contrast, Algorithm~\ref{alg.Kmuomg} corrects the parameters
$(\omega_1,\mu_2)$ while enforcing the normalized ergodic frequency $(1,\omega_2)$ 
(note that the parameters $\tilde \omega _1$ and $\tilde \omega _2$ are fixed).
In both cases, the output is a torus tuple, consisting of an embedding $\kk$, a normal bundle $\nf$,
a matrix of normal eigenvalues $\lamb$, and the associated parameter vector $\prm$.
As initial data we use the explicit solution at $\varepsilon=0$ given in \eqref{eq.toy-init-guess}.

All runs use $60$ digits of working precision and a Newton tolerance of $10^{-25}$
(in practice, the final residual is approximately $10^{-39}$ after $5$ Newton iterations). We fix
$\varepsilon=0.01$ and discretize $(\th_1,\th_2)$ on a uniform $64\times 64$ mesh.
Table~\ref{tab.toy-sol} reports the resulting eigenvalues and parameter values obtained with both algorithms.
\begin{table}[ht]
    \centering
    \caption{Solutions for parameter and eigenvalues for the perturbation value $\varepsilon=0.01$}
    \label{tab.toy-sol}
    \[
    \begin{array}{|c||c|r|}
    \hline
      \multirow{5}{*}{\rotatebox{90}{\text{Algorithm~\ref{alg.Kmu}}}} 
      & \mu _1 & \mathtt{1.00052697210630033254839728493756871231811405034174304289635}  \\
      &\mu _2 & \mathtt{1.00001732534809581567761988705188895022301721109568914593080}  \\ \cline{2-3} \cline{2-3}
      &\lambda _1 & \mathtt{-3.00001407507960698892934006117740754266855025972259232633777} \\
      &\lambda _2 & \mathtt{6.99999461263855808906267969183936397919201057951920582992174} \\
      &\lambda _3 & \mathtt{4.99994337324595685767471272768096176370143257808820548402771} \\
    \hline \hline
      \multirow{5}{*}{\rotatebox{90}{\text{Algorithm~\ref{alg.Kmuomg}}}} 
      & \omega _1 & \mathtt{1.00052697138302398739590392210119322699701341330905208244312}  \\
      &\mu _2 & \mathtt{0.99993599939111782244076921568248481013805636329007258339164}  \\ \cline{2-3} \cline{2-3}
      &\lambda _1 & \mathtt{-3.00001723383624588439532799249972571787550377228585058677492} \\
      &\lambda _2 & \mathtt{ 6.99998850562657732923210262219104744126912949598907046750395} \\
      &\lambda _3 & \mathtt{ 4.99994337293893088365014315626024825770010129793540870984207} \\
    \hline
    \end{array}
    \]
\end{table}

For Algorithm~\ref{alg.Kmu}, Fig.~\ref{fig.toy} shows the computed invariant torus, including the two
sections $\{\theta_1=0\}$ and $\{\theta_2=0\}$. The figure also shows the projections onto the
$r_{1,2}$ and $r_{3,4}$ plane at the bottom of the plot, with points colored according to the value
of $h$.

Table~\ref{tab.toy-speed-up} reports the execution times obtained with OpenMP parallelization for the main
routines that scale with the $(\theta_1,\theta_2)$ mesh, such as vector-field evaluation, linear system
solvers, and the cohomological solver. In our current implementation, the FFT routines remain the main
bottleneck and have not been parallelized; this limits the achievable speed-up. We stress that the OpenMP
parallelization was not tuned for optimal performance. Moreover, the runs were carried out on a standard
laptop, with a relatively small mesh, and using an imbalanced number of threads (np=3), which is reflected
in Table~\ref{tab.toy-speed-up}. The results for Algorithm~\ref{alg.Kmuomg} are visually identical.

\begin{minipage}[t]{.45\textwidth}
% \begin{figure}[ht]
    \centering
    \captionof{figure}{Illustration of the torus solution \eqref{eq.toy-test} using  Algorithm~\ref{alg.Kmu}}
    \label{fig.toy}
    \includegraphics[scale=.8]{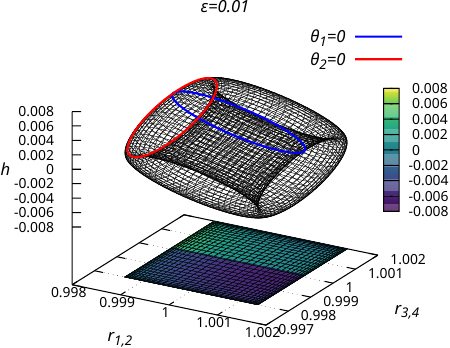}
% \end{figure}
\end{minipage} \hfill 
\begin{minipage}[t]{.45\textwidth}
% \begin{table}[ht]
    \centering
    \captionof{table}{Wall times and speed-up depending on the number of threads (np)}
    \label{tab.toy-speed-up}
    \begin{tabular}{ccc}
np & wall-time &  speed-up \\ \hline
1 & {\tt 28.374080000} & 1.00 \\
2 & {\tt 22.070379677} & 1.27 \\
3 & {\tt 23.536360025} & 1.16 \\
4 & {\tt 21.916848580} & 1.28 \\
    \end{tabular}
% \end{table}
\end{minipage}

\subsubsection{Continuation with respect to \texorpdfstring{$\varepsilon$}{epsilon}}

We perform standard continuation w.r.t. the parameter $\varepsilon$ in \eqref{eq.toy-test} using
Algorithms~\ref{alg.Kmu} and \ref{alg.Kmuomg}. We increase the $(\theta_1,\theta_2)$ mesh to $128\times 128$,
resulting in an average wall time of $85$ seconds per continuation step. Computations are carried out with
a working precision of $65$ digits for \verb+mpfr+. The Newton convergence tolerance is set to $10^{-16}$ for all continuation
steps, except for the final continuation value, where Newton is solved to a tolerance of $10^{-25}$.

The continuation step size is initialized as $\Delta\varepsilon = 2^{-6}$ and is constrained to remain in
the interval $(10^{-10},\,0.1)$. If Newton converges in fewer than $3$ iterations, we increase the step size
by $10\%$. If Newton requires at least $6$ iterations or fails to converge, we decrease the step size by $40\%$.
If Newton fails to converge for three consecutive attempts, the continuation procedure is terminated.

Figures~\ref{fig.toycontsol} and \ref{fig.toycontsolomg} illustrate the continuation runs obtained with
Algorithm~\ref{alg.Kmu} and Algorithm~\ref{alg.Kmuomg}, respectively, showing two closed continuation paths up to a common final $\varepsilon$.
 
\begin{figure}[ht]
  \centering
  \caption{Continuation w.r.t. $\varepsilon$ using Algorithm~\ref{alg.Kmu}. (1,1): Illustration of the solution at the last continuation solution; (1,2): Evolution of eigenvalues $\lambda _{2,3}$ ($\lambda _1$ stays close to $-3$); (1,3): Evolution of the parameters $\mu _{1,2}$; and (1,4): Illustration of the first and last solution and evolution sectioned on $\{\theta _2 = 0\}$}
  \label{fig.toycontsol}
  \setlength{\tabcolsep}{.5em} % horizontal padding between columns  
  \begin{tabular}{@{}cccc@{}}
    \parbox[t]{.24\textwidth}{\includegraphics[width=\linewidth,page=1]{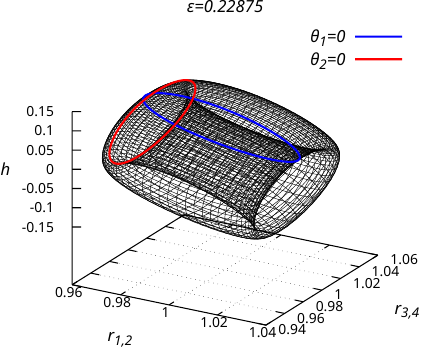}} &
    \parbox[t]{.24\textwidth}{\includegraphics[width=\linewidth,page=2]{toycontsol-crop.pdf}} &
    \parbox[t]{.225\textwidth}{\includegraphics[width=\linewidth,page=3]{toycontsol-crop.pdf}} &
    \parbox[t]{.24\textwidth}{\includegraphics[width=\linewidth,page=4]{toycontsol-crop.pdf}}
  \end{tabular}
\end{figure}

\begin{figure}[ht]
  \centering
  \caption{Same as Figure~\ref{fig.toycontsol} but using Algorithm~\ref{alg.Kmuomg} which corrects $\omega _1$ and $\mu _2$}
  \label{fig.toycontsolomg}
  \setlength{\tabcolsep}{.5em} % horizontal padding between columns  
  \begin{tabular}{@{}cccc@{}}
    \parbox[t]{.24\textwidth}{\includegraphics[width=\linewidth,page=1]{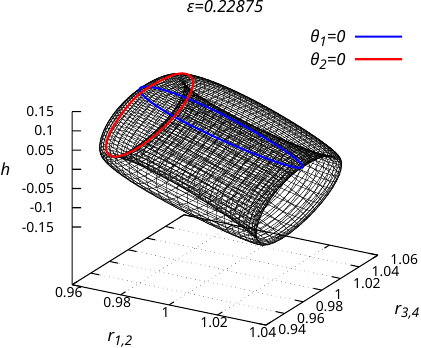}} &
    \parbox[t]{.24\textwidth}{\includegraphics[width=\linewidth,page=2]{toy2contsol-crop.pdf}} &
    \parbox[t]{.225\textwidth}{\includegraphics[width=\linewidth,page=3]{toy2contsol-crop.pdf}} &
    \parbox[t]{.24\textwidth}{\includegraphics[width=\linewidth,page=4]{toy2contsol-crop.pdf}}
  \end{tabular}
\end{figure}

\end{document}